\documentclass[a4paper,12pt]{article}

\usepackage{amsmath,amssymb,amsthm,amscd}

\newcommand{\Fg}{\mathfrak{g}}
\newcommand{\Fh}{\mathfrak{h}}
\newcommand{\CB}{\mathcal{B}}
\newcommand{\CG}{\mathcal{G}}
\newcommand{\CK}{\mathcal{K}}
\newcommand{\CO}{\mathcal{O}}
\newcommand{\CZ}{\mathcal{Z}}

\newcommand{\BC}{\mathbb{C}}
\newcommand{\BR}{\mathbb{R}}
\newcommand{\BQ}{\mathbb{Q}}
\newcommand{\BZ}{\mathbb{Z}}

\newcommand{\mv}{\mathcal{MV}}
\newcommand{\Hom}{\mathop{\rm Hom}\nolimits}
\newcommand{\wt}{\mathop{\rm wt}\nolimits}
\newcommand{\Conv}{\mathop{\rm Conv}\nolimits}
\newcommand{\id}{\mathop{\rm id}\nolimits}

\newcommand{\pair}[2]{\langle #1,\,#2 \rangle}
\newcommand{\Bpair}[2]{\bigl\langle #1,\,#2 \bigr\rangle}

\newcommand{\ve}{\varepsilon}
\newcommand{\vp}{\varphi}

\newcommand{\ha}[1]{\widehat{#1}}
\newcommand{\ol}[1]{\overline{#1}}

\newcommand{\bzero}{{\bf 0}}

\newcommand{\ba}{{\bf a}}
\newcommand{\bb}{{\bf b}}
\newcommand{\bc}{{\bf c}}
\newcommand{\bd}{{\bf d}}
\newcommand{\bi}{{\bf i}}
\newcommand{\bj}{{\bf j}}
\newcommand{\wi}[1]{w^{\bi}_{#1}}
\newcommand{\wj}[1]{w^{\bj}_{#1}}
\newcommand{\yi}[1]{y^{\bi}_{#1}}

\newcommand{\vi}[1]{v^{\bi}_{#1}}
\newcommand{\vj}[1]{v^{\bj}_{#1}}
\newcommand{\bti}[1]{\beta^{\bi}_{#1}}
\newcommand{\btj}[1]{\beta^{\bj}_{#1}}
\newcommand{\si}[1]{s_{i_{#1}}}
\newcommand{\sj}[1]{s_{j_{#1}}}
\newcommand{\Ni}[1]{n^{\bi}_{#1}}
\newcommand{\Nj}[1]{n^{\bj}_{#1}}

\newcommand{\NNj}[1]{N^{\bj}_{#1}}
\newcommand{\xii}[1]{\xi^{\bi}_{#1}}
\newcommand{\xij}[1]{\xi^{\bj}_{#1}}

\newcommand{\bqed}{\quad \hbox{\rule[-0.5pt]{3pt}{8pt}}}

\makeatletter
\@addtoreset{equation}{subsection}

\renewcommand\section{\@startsection{section}{1}{0pt}
{-3.5ex plus -1ex minus -.2ex}{1.0ex plus .2ex}{\large\bf}}
\renewcommand\subsection{\@startsection{subsection}{1}{0pt}
{2.5ex plus 1ex minus .2ex}{-1em}{\bf}}
\makeatother

\setlength{\textwidth}{165mm}
\setlength{\oddsidemargin}{-1.5mm}
\setlength{\textheight}{240mm}
\setlength{\topmargin}{0mm}
\setlength{\headheight}{0mm}
\setlength{\headsep}{0mm}

\newcommand{\vsp}{\vspace{3mm}}

\theoremstyle{plain}
\newtheorem{thm}{Theorem}[subsection]
\newtheorem{lem}[thm]{Lemma}
\newtheorem{prop}[thm]{Proposition}
\newtheorem{cor}[thm]{Corollary}

\newtheorem{claim}{Claim}[thm]

\newtheorem*{claim*}{Claim}

\theoremstyle{definition}
\newtheorem{dfn}[thm]{Definition}
\newtheorem*{question}{Open Problem}

\theoremstyle{remark}
\newtheorem{rem}[thm]{Remark}
\newtheorem{ex}[thm]{Example}

\begin{document}

\setlength{\baselineskip}{18pt}

\title{\Large\bf 
 Mirkovi\'c-Vilonen polytopes lying in \\ 
 a Demazure crystal and an opposite Demazure crystal}
\author{
 Satoshi Naito \\ 
 \small Institute of Mathematics, University of Tsukuba, \\
 \small Tsukuba, Ibaraki 305-8571, Japan \ 
 (e-mail: {\tt naito@math.tsukuba.ac.jp})
 \\[2mm] and \\[2mm]
 Daisuke Sagaki \\ 
 \small Institute of Mathematics, University of Tsukuba, \\
 \small Tsukuba, Ibaraki 305-8571, Japan \ 
 (e-mail: {\tt sagaki@math.tsukuba.ac.jp})
}
\date{}
\maketitle

%
\begin{abstract} \setlength{\baselineskip}{16pt}
We give a necessary and sufficient condition 
for an MV polytope $P$ 
in a highest weight crystal to lie 
in an arbitrary fixed Demazure crystal 
(resp., opposite Demazure crystal), 
in terms of the lengths of 
edges along a path through the $1$-skeleton of $P$ 
corresponding to a reduced word for the longest element of 
the Weyl group $W$. 
Also, we give an explicit description 
as a pseudo-Weyl polytope for extremal MV polytopes
in a highest weight crystal. 
Finally, by combining the results above, 
we obtain a polytopal condition for an MV polytope 
$P$ to lie in an arbitrary fixed opposite Demazure crystal. 
\end{abstract}
%
%
\section{Introduction.}
\label{sec:intro}
Let $G$ be a complex, connected, semisimple algebraic group 
with Lie algebra $\Fg$, $T$ a maximal torus with Lie algebra $\Fh$, 
$B$ a Borel subgroup containing $T$, $N$ the unipotent radical of $B$, 
and $G^{\vee}$ the Langlands dual group of $G$ 
with Lie algebra $\Fg^{\vee}$;
we choose the convention that 
the roots in $B$ are the negative roots. 
Throughout this paper, we assume that $G$ is simply-laced;
all highest weight crystals and Demazure crystals 
--- the ones of our interest --- can be obtained from those 
in the simply-laced cases by a standard technique of 
``folding'' by diagram automorphisms 
(see \cite{NS1}\,--\,\cite{NS3}, and \cite{H}). 
The affine Grassmannian 
$\CG r$ for $G$ is defined to be the quotient set 
$G(\CK)/G(\CO)$, equipped with the structure of a complex algebraic 
ind-variety, where $G(\CK)$ denotes the group of 
$\CK:=\BC(\!(t)\!)$-valued points of $G$, and $G(\CO) \subset G(\CK)$ denotes the 
subgroup of $\CO:=\BC[[t]]$-valued points of $G$. 
For each dominant coweight 
$\lambda \in X_{\ast}(T):=\Hom(\BC^{\ast},\,T)$, 
Mirkovi\'c and Vilonen \cite{MV1}, \cite{MV2} 
discovered a family $\CZ(\lambda)$ of 
closed, irreducible, algebraic subvarieties, 
called Mirkovi\'c-Vilonen (MV for short) cycles, 
in the closure $\overline{{\CG r}^{\lambda}} \subset \CG r$ of 
the $G(\CO)$-orbit ${\CG r}^{\lambda}:=G(\CO)[\lambda] \subset \CG r$
through the image $[\lambda] \in \CG r$ of 
$\lambda \in X_{\ast}(T) \subset G(\CK)$, 
and proved that these algebraic subvarieties provide 
a basis for the irreducible highest weight module $L(\lambda)$ 
of highest weight $\lambda$ 
over the Langlands dual group $G^{\vee}$; 
more precisely, the family $\CZ(\lambda)$ consists of the irreducible 
components of the closure of the intersection 
${\CG r}^{\lambda} \cap N(\CK)[\nu] \subset \CG r$ 
for weights $\nu \in X_{\ast}(T)$ of $L(\lambda)$, 
where $N(\CK) \subset G(\CK)$ denotes 
the subgroup of $\CK$-valued points of $G$.
Furthermore, Braverman and Gaitsgory \cite{BG} (see also \cite{BFG})
endowed the set $\CZ(\lambda)$ of MV cycles 
with a crystal structure, and gave an isomorphism of crystals
between the resulting crystal and the crystal basis $\CB(\lambda)$ 
of the irreducible highest weight module $V(\lambda)$ of highest 
weight $\lambda \in X_{\ast}(T) \subset \Fh$ for the quantized universal 
enveloping algebra $U_{q}(\Fg^{\vee})$ over $\BC(q)$ of the (Langlands) 
dual Lie algebra $\Fg^{\vee}$ of $\Fg$ with Cartan subalgebra $\Fh^{\ast}$. 

In order to obtain an explicit combinatorial description of 
the MV cycles in $\CZ(\lambda)$ for a dominant coweight 
$\lambda \in X_{\ast}(T) \subset \Fh$, Anderson \cite{A} 
defined a family $\mv(\lambda)$ of convex polytopes 
in the real form $\Fh_{\BR}$ of $\Fh$, 
which he called Mirkovi\'c-Vilonen (MV for short) polytopes, 
to be the moment map images of 
the MV cycles in $\CZ(\lambda)$. 
Furthermore, Kamnitzer \cite{Kam1}, \cite{Kam2} characterized 
the MV polytopes in $\mv(\lambda)$ as certain pseudo-Weyl polytopes 
with highest vertex $\lambda$ satisfying 
the ``tropical Pl\"ucker relations'', and then showed 
the existence of a bijection (in fact, an isomorphism of crystals)
between the crystal basis $\CB(\lambda)$ 
and the set $\mv(\lambda)$ of MV polytopes, which is 
endowed with a crystal structure coming from the one
(due to Lusztig \cite{Lu1} and Berenstein-Zelevinsky \cite{BZ}) 
on the canonical basis for the negative part 
$U_{q}^{-}(\Fg^{\vee})$ of $U_{q}(\Fg^{\vee})$;
in \cite{Kam2}, Kamnitzer also proved that 
this crystal structure agrees with the crystal structure
coming from the one on the set $\CZ(\lambda)$ of MV cycles. 

Let $W$ denote the Weyl group of $\Fg$, 
with $e$ the unit element and $w_{0}$ 
the longest element. We fix a dominant coweight 
$\lambda \in X_{\ast}(T) \subset \Fh_{\BR}$. 
Now, to understand better the MV polytopes 
in $\mv(\lambda)$, we consider a filtration 
(compatible with the Bruhat ordering $\le$ on $W$) 
of the crystal basis $\CB(\lambda)$ 
by Demazure crystals 
$\CB_{x}(\lambda) \subset \CB(\lambda)$ 
(resp., opposite Demazure crystals 
$\CB^{x}(\lambda) \subset \CB(\lambda)$), 
$x \in W$, and then their images
$\mv_{x}(\lambda) \subset \mv(\lambda)$ 
(resp., $\mv^{x}(\lambda) \subset \mv(\lambda)$), 
$x \in W$, under the isomorphism of crystals 
$\CB(\lambda) \stackrel{\sim}{\rightarrow} 
\mv(\lambda)$ above. 
Here, for each $x \in W$, 
the Demazure module $V_{x}(\lambda)$ 
(resp., opposite Demazure module $V^{x}(\lambda)$) is 
the $U_{q}^{+}(\Fg^{\vee})$-submodule 
(resp., $U_{q}^{-}(\Fg^{\vee})$-submodule) of $V(\lambda)$ 
generated by the one-dimensional weight space 
$V(\lambda)_{x \cdot \lambda} 
 \subset V(\lambda)$ of weight 
$x \cdot \lambda \in X_{\ast}(T) \subset \Fh_{\BR}$, 
where $U_{q}^{+}(\Fg^{\vee})$ (resp., $U_{q}^{-}(\Fg^{\vee})$) is the 
positive part (resp., negative part) of $U_{q}(\Fg^{\vee})$; recall 
from \cite{Kas} that the Demazure crystal
$\CB_{x}(\lambda)$ 
(resp., opposite Demazure crystal $\CB^{x}(\lambda)$)
is a subset of $\CB(\lambda)$ such that 
\begin{equation*}
V(\lambda) \supset 
V_{x}(\lambda) = 
 \bigoplus_{b \in \CB_{x}(\lambda)}
 \BC(q) G_{\lambda}(b)
\qquad
\left(
\text{resp.,} \ 
V(\lambda) \supset 
V^{x}(\lambda) = 
 \bigoplus_{b \in \CB^{x}(\lambda)}
 \BC(q) G_{\lambda}(b)
\right), 
\end{equation*}
where $G_{\lambda}(b)$, $b \in \CB(\lambda)$, 
form the lower global basis of $V(\lambda)$.
In this paper, for a dominant coweight 
$\lambda \in X_{\ast}(T) \subset \Fh_{\BR}$ and $x \in W$, 
we give a necessary and sufficient condition for 
an MV polytope in $\mv(\lambda)$ to lie in 
the image $\mv_{x}(\lambda) \subset \mv(\lambda)$ 
(resp., $\mv^{x}(\lambda) \subset \mv(\lambda)$) 
of $\CB_{x}(\lambda) \subset \CB(\lambda)$ 
(resp., $\CB^{x}(\lambda) \subset \CB(\lambda)$) 
under the isomorphism $\CB(\lambda) 
\stackrel{\sim}{\rightarrow} \mv(\lambda)$, 
in terms of the lengths of edges along a path 
through the $1$-skeleton of an MV polytope corresponding to 
a reduced word of $w_{0} \in W$; 
see Theorem~\ref{thm:main}\,(1) 
(resp., Theorem~\ref{thm:opdem}) for details. 
Also, we give a similar result for the crystal basis 
$\CB(\infty)$ of the negative part $U_{q}^{-}(\Fg^{\vee})$ 
of $U_{q}(\Fg^{\vee})$; 
see Theorem~\ref{thm:main}\,(2) for details. 

Furthermore, for each $x \in W$, we give an explicit 
description as a pseudo-Weyl polytope 
for the image (called an extremal MV polytope) 
$P_{x \cdot \lambda} \in \mv(\lambda)$ of the 
extremal element $u_{x \cdot \lambda} \in \CB(\lambda)$ 
of weight $x \cdot \lambda$ under 
the isomorphism of crystals $\CB(\lambda) 
\stackrel{\sim}{\rightarrow} \mv(\lambda)$; 
see Theorem~\ref{thm:ext}\,(1) for details.
As a corollary, we obtain a purely combinatorial proof of 
the fact that the extremal MV polytope 
$P_{x \cdot \lambda} \in \mv(\lambda)$ of weight 
$x \cdot \lambda \in X_{\ast}(T) \subset \Fh_{\BR}$ is 
identical to 
the convex hull $\Conv(W_{\le x} \cdot \lambda)$ 
in $\Fh_{\BR}$ of the subset $W_{\le x} \cdot \lambda:=
\bigl\{z \cdot \lambda \mid z \in W \text{ with } z \le x \bigr\}$ 
of the $W$-orbit $W \cdot \lambda$ through $\lambda$; 
see Theorem~\ref{thm:ext}\,(2) for details. 
This result is a generalization of the fact 
(see \cite{A} and \cite{Kam1}) that 
the MV polytope $P_{e \cdot \lambda} \in \mv(\lambda)$ 
of highest weight $\lambda$ consists only of 
the single element $\lambda \in X_{\ast}(T) \subset \Fh$, 
which is the moment map image of the point $[\lambda] \in \CG r$, 
and the MV polytope 
$P_{w_0 \cdot \lambda} \in \mv(\lambda)$ 
of lowest weight $w_{0} \cdot \lambda$ is identical to 
the convex hull $\Conv(W \cdot \lambda)$ in $\Fh_{\BR}$ 
of the $W$-orbit $W \cdot \lambda$, which is the moment map 
image of the finite-dimensional projective algebraic subvariety 
$\overline{{\CG r}^{\lambda}}$ of $\CG r$. 

By combining our result for opposite Demazure crystals and 
the explicit description above of extremal MV polytopes, 
we can prove that for a dominant coweight 
$\lambda \in X_{\ast}(T) \subset \Fh_{\BR}$ and $x \in W$, 
an MV polytope $P$ in $\mv(\lambda)$ lies in the opposite 
Demazure crystal $\mv^{x}(\lambda)$ if and only if 
$P$ contains (as a set) the extremal MV polytope 
$P_{x \cdot \lambda}= \Conv (W_{\le x} \cdot \lambda)$; 
see Theorem~\ref{thm:p-opdem}. 

Finally, for a dominant coweight 
$\lambda \in X_{\ast}(T) \subset \Fh_{\BR}$, 
we refine the filtration of the crystal $\mv(\lambda)$ 
consisting of Demazure crystals $\mv_{x}(\lambda)$
(resp., opposite Demazure crystals $\mv^{x}(\lambda)$), $x \in W$, 
to obtain the following decomposition into a disjoint union:
\begin{equation*}
\mv(\lambda)=\bigsqcup_{z \in W^{\lambda}_{\min}} 
\ol{\mv}_{z}(\lambda)
\qquad
\left(
\text{resp.,} \ 
\mv(\lambda)=\bigsqcup_{z \in W^{\lambda}_{\max}} 
\ol{\mv}^{z}(\lambda)
\right), 
\end{equation*}
where $W^{\lambda}_{\min}$ (resp., $W^{\lambda}_{\max}$) 
is the set of minimal (resp., maximal) coset representatives 
for the quotient set $W/W_{\lambda}$, 
with $W_{\lambda} \ (\subset W)$ the stabilizer of $\lambda$, 
 with respect to the Bruhat ordering $\le$ on $W$, 
 and where, for each $x \in W^{\lambda}_{\min}$ 
(resp., $x \in W^{\lambda}_{\max}$), we set 
\begin{align*}
& 
\ol{\mv}_{x}(\lambda):=
\mv_{x}(\lambda) \setminus 
\left(
 \bigcup_{z \in W^{\lambda}_{\min},\,\,z < x}
 \mv_{z}(\lambda)
\right) \\[3mm]
& \left(
\text{resp.,} \quad
\ol{\mv}^{x}(\lambda):=
\mv^{x}(\lambda) \setminus 
\left(
 \bigcup_{z \in W^{\lambda}_{\max},\,\,z > x}
 \mv^{z}(\lambda)
\right)
\right).
\end{align*}
Then, for each $x \in W$, we give a necessary and sufficient
condition for an MV polytope $P \in \mv(\lambda)$ to 
lie in $\ol{\mv}_{x}(\lambda)$ (resp., $\ol{\mv}^{x}(\lambda)$); 
see Proposition~\ref{prop:init} 
(resp., Proposition~\ref{prop:fin}) for details. 
This result tells us the smallest, with respect to 
the inclusion relation, Demazure crystal 
(resp., opposite Demazure crystal) in which $P \in \mv(\lambda)$ lies. 

Here, we should mention some closely related results: 
in \cite[\S11.2]{Schw}, Schwer showed that 
the dimension of each weight space of a Demazure module is 
given by the number of top-dimensional irreducible components of 
the intersection of an $N(\CK)$-orbit and an orbit 
under the standard Iwahori subgroup of $G(\CK)$ 
in the affine Grassmannian $\CG r$ 
(note that the closures of these irreducible components are not MV cycles) ; 
in \cite[\S4]{Ion}, Ion showed a very similar result by 
using certain intersections in the affine (full) flag manifold;  
in \cite{S}, Savage gave a geometric realization of 
Demazure crystals for simply-laced Kac-Moody algebras 
by using certain subvarieties 
(called Demazure quiver varieties) of Nakajima's quiver varieties; 
in \cite{N}, Nakashima gave a condition for an element
of a highest weight crystal to lie in a Demazure crystal, and also 
gave an explicit description of extremal elements
for symmetrizable Kac-Moody algebras, both of which are 
in terms of polyhedral realizations; 
among other realizations of Demazure crystals, 
we name \cite{LitInv}, \cite{Li2}, and \cite{LP-IMRN}. 

This paper is organized as follows. 
In \S\S\ref{subsec:MV} and \ref{subsec:cry-MV}, 
following Kamnitzer \cite{Kam1}, \cite{Kam2}, 
we review some of the basic facts on MV polytopes, 
and then give a description of the crystal structure on them. 
In \S\S\ref{subsec:demazure} and \ref{subsec:main}, 
after recalling the notion of Demazure crystals, 
we state a necessary and sufficient condition 
for an MV polytope to lie in 
a Demazure crystal in terms of the lengths of its edges; 
the proof is given in \S\ref{subsec:prf-main}. 
In \S\S\ref{subsec:opdem} and \ref{subsec:op-main}, 
a similar result for an opposite 
Demazure crystal is obtained. 
In \S\ref{subsec:extremal}, we provide an explicit 
description of extremal MV polytopes; 
\S\S\ref{subsec:length}--\ref{subsec:prf-thmext}
are devoted to its proof. 
In \S\ref{subsec:p-opdem}, 
we give a polytopal condition for an MV polytope 
to lie in an opposite Demazure crystal, and 
in \S\ref{subsec:question}, we pose 
a question concerning a necessary (but, not sufficient)
polytopal condition for an MV polytope to lie 
in a Demazure crystal. 
Finally, in \S\ref{sec:decom}, 
we give a necessary and sufficient 
condition for an MV polytope to lie in the 
$\ol{\mv}_{x}(\lambda)$ (resp., $\ol{\mv}^{x}(\lambda)$) 
above for $x \in W$. 

%
\section{Mirkovi\'c-Vilonen polytopes.}
\label{sec:MVda}

%
\subsection{Mirkovi\'c-Vilonen polytopes.}
\label{subsec:MV}

Here, following \cite{Kam1}, we briefly review some of 
the basic facts on Gelfand-Goresky-MacPherson-Serganova 
(GGMS for short) data and Mirkovi\'c-Vilonen (MV for short) polytopes. 
Let $G$ be a complex, connected, semisimple algebraic group 
with Lie algebra $\Fg$, $T$ a maximal torus with Lie algebra $\Fh$, 
$B$ a Borel subgroup containing $T$, and $N$ the unipotent radical of $B$; 
we choose the convention that 
the roots in $B$ are the negative roots. 
Throughout this paper, we assume that $G$ (and hence $\Fg$) 
is simply-laced, for reasons explained in the Introduction. 
Let 
$\bigl(A=(a_{ij})_{i,j \in I}, \, 
 \Pi:=\bigl\{\alpha_{j}\bigr\}_{j \in I}, \, 
 \Pi^{\vee}:=\bigl\{h_{j}\bigr\}_{j \in I}, \, 
 \Fh^{\ast},\,\Fh
 \bigr)$ be the root datum of $\Fg$, where 
$A=(a_{ij})_{i,j \in I}$ is the Cartan matrix, 
$\Fh$ is the Cartan subalgebra, 
$\Pi:=\bigl\{\alpha_{j}\bigr\}_{j \in I} \subset 
 \Fh^{\ast}:=\Hom_{\BC}(\Fh,\,\BC)$ 
is the set of simple roots, and 
$\Pi^{\vee}:=\bigl\{h_{j}\bigr\}_{j \in I} \subset \Fh$ 
is the set of simple coroots; note that 
$\pair{\alpha_{j}}{h_{i}}=a_{ij}$ for $i,\,j \in I$, 
where $\pair{\cdot}{\cdot}$ denotes the canonical pairing 
between $\Fh^{\ast}$ and $\Fh$. 
Let $W:=\langle s_{j} \mid j \in I \rangle$ 
be the Weyl group of $\Fg$, where $s_{j}$, $j \in I$, are 
the simple reflections, with length function $\ell:W \rightarrow \BZ_{\ge 0}$, 
and let $e,\,w_{0} \in W$ be the unit element 
and the longest element of $W$, respectively. 
Let $R(w_{0})$ denote the set of all reduced words for $w_{0}$, that is, 
all sequences $(i_{1},\,i_{2},\,\dots,\,i_{m})$ of elements of $I$ 
such that $s_{i_{1}}s_{i_{2}} \cdots s_{i_{m}}=w_{0}$, where 
$m$ is the length $\ell(w_{0})$ of the longest element $w_{0}$.
Also, we denote by $\le$ the (strong) Bruhat ordering on $W$. 

We denote by $G^{\vee}$ the Langlands dual group of $G$, and 
by $\Fg^{\vee}$ its Lie algebra. Thus, $\Fg^{\vee}$ is
the complex finite-dimensional semisimple Lie algebra
associated to the root datum 
$\bigl({}^{t}A=(a_{ji})_{i,j \in I}, \, 
 \Pi^{\vee}:=\bigl\{h_{j}\bigr\}_{j \in I}, \, 
 \Pi:=\bigl\{\alpha_{j}\bigr\}_{j \in I}, \, 
 \Fh,\,\Fh^{\ast}
 \bigr)$; 
note that the Cartan subalgebra of 
$\Fg^{\vee}$ is $\Fh^{\ast}$, not $\Fh$. 
Let $U_{q}(\Fg^{\vee})$ denote 
the quantized universal enveloping algebra 
over the field $\BC(q)$ of rational functions in $q$ 
associated to the dual Lie algebra $\Fg^{\vee}$, 
with positive part $U_{q}^{+}(\Fg^{\vee})$ and 
negative part $U_{q}^{-}(\Fg^{\vee})$. 
We denote by $\CB(\infty)$ the crystal basis of $U_{q}^{-}(\Fg^{\vee})$. 
Also, for a dominant coweight 
$\lambda \in X_{\ast}(T):=\Hom(\BC^{\ast},\,T) \subset 
 \Fh_{\BR}:= \sum_{j \in I} \BR h_{j} \subset \Fh$, denote by 
$V(\lambda)$ the integrable highest 
weight $U_{q}(\Fg^{\vee})$-module 
of highest weight $\lambda$, and by 
$\CB(\lambda)$ the crystal basis of 
$V(\lambda)$. 

Let $\mu_{\bullet}=(\mu_{w})_{w \in W}$ be 
a collection of elements of 
$\Fh_{\BR}=\sum_{j \in I} \BR h_{j}$. 
We call $\mu_{\bullet}$ a 
Gelfand-Goresky-MacPherson-Serganova 
(GGMS) datum if it satisfies 
the condition that 
$w^{-1} \cdot (\mu_{w'} - \mu_{w}) \in 
 \sum_{j \in I} \BZ_{\ge 0} h_{j}$ for all $w,\,w' \in W$. 
It follows by induction on $W$ 
with respect to the (weak) Bruhat ordering that 
$\mu_{\bullet}=(\mu_{w})_{w \in W}$ is a GGMS datum 
if and only if 
%
%
\begin{equation} \label{eq:length}
\mu_{ws_{i}}-\mu_{w} \in \BZ_{\ge 0}\,(w \cdot h_{i}) 
\quad 
\text{for every $w \in W$ and $i \in I$}. 
\end{equation}
Following \cite{Kam1} and \cite{Kam2}, 
to each GGMS datum $\mu_{\bullet}=(\mu_{w})_{w \in W}$, 
we associate a convex polytope $P(\mu_{\bullet}) \subset \Fh_{\BR}$ by:
%
%
\begin{equation} \label{eq:poly}
P(\mu_{\bullet})=
 \bigl\{ 
 h \in \Fh_{\BR} \mid 
 w^{-1} \cdot (h-\mu_{w}) \in 
 \textstyle{\sum_{j \in I}} \BR_{\ge 0} h_{j} \ 
 \text{for all $w \in W$}
 \bigr\};
\end{equation}
the polytope $P(\mu_{\bullet})$ is called a pseudo-Weyl polytope 
with GGMS datum $\mu_{\bullet}$. 
Note that the GGMS datum $\mu_{\bullet}=(\mu_{w})_{w \in W}$ is 
determined uniquely by the convex polytope $P(\mu_{\bullet})$. 
Also, we know from \cite[Proposition~2.2]{Kam1} that 
the set of vertices of the polytope $P(\mu_{\bullet})$ 
is given by the collection $\mu_{\bullet}=(\mu_{w})_{w \in W}$ 
(possibly, with repetitions). In particular, we have
\begin{equation*}
P(\mu_{\bullet}) = 
\Conv\,\bigl\{\mu_{w} \mid w \in W\bigr\},
\end{equation*}
where for a subset $X$ of $\Fh_{\BR}$, 
$\Conv X$ denotes the convex hull in 
$\Fh_{\BR}$ of $X$. 

%
\begin{dfn} \label{dfn:braid}
Let $\bi=(i_{1},\,i_{2},\,\dots,\,i_{m}) \in R(w_{0})$ and 
$\bj=(j_{1},\,j_{2},\,\dots,\,j_{m}) \in R(w_{0})$ be 
reduced words for the longest element $w_{0} \in W$. 

(1) We say that $\bi$ and $\bj$ are related 
by a $2$-move if there exist indices
$i,\,j \in I$ with $a_{ij}=a_{ji}=0$ and 
an integer $0 \le k \le m-2$ such that 
$i_{l}=j_{l}$ for all $1 \le l \le m$ with 
$l \ne k+1,\,k+2$, and such that 
$i_{k+1}=j_{k+2}=i$, $i_{k+2}=j_{k+1}=j$, i.e., 
%
%
\begin{equation} \label{eq:2move}
\begin{array}{l}
\bi=(i_{1},\,\dots,\,i_{k},\,i,\,j,\,i_{k+3},\,\dots,\,i_{m}), \\[1.5mm]
\bj=(i_{1},\,\dots,\,i_{k},\,j,\,i,\,i_{k+3},\,\dots,\,i_{m}).
\end{array}
\end{equation}

(2) We say that $\bi$ and $\bj$ are related 
by a $3$-move if there exist indices $i,\,j \in I$ 
with $a_{ij}=a_{ji}=-1$ and an integer $0 \le k \le m-3$ 
such that $i_{l}=j_{l}$ for all $1 \le l \le m$ with 
$l \ne k+1,\,k+2,\,k+3$, and such that
$i_{k+1}=i_{k+3}=j_{k+2}=i$, 
$i_{k+2}=j_{k+1}=j_{k+3}=j$, i.e., 
%
%
\begin{equation} \label{eq:3move}
\begin{array}{l}
\bi=(i_{1},\,\dots,\,i_{k},\,i,\,j,\,i,\,i_{k+4},\,\dots,\,i_{m}), \\[1.5mm]
\bj=(i_{1},\,\dots,\,i_{k},\,j,\,i,\,j,\,i_{k+4},\,\dots,\,i_{m}).
\end{array}
\end{equation}
\end{dfn}

%
\begin{rem} \label{rem:braid}
Let $\bi,\,\bj \in R(w_{0})$.
It is well-known (see, for example, 
\cite[Theorem~3.3.1\,(ii)]{BB}) that 
there exists a sequence 
$\bi=\bi_{0},\,\bi_{1},\,\dots,\,\bi_{t}=\bj$ 
of elements of $R(w_{0})$ such that 
$\bi_{u}$ and $\bi_{u+1}$ are related 
by a $2$-move or a $3$-move 
for each $0 \le u \le t-1$. 
\end{rem}

Let $\bi=(i_{1},\,i_{2},\,\dots,\,i_{m}) \in R(w_{0})$ 
be a reduced word for $w_{0}$. 
We set $\wi{l}:=\si{1}\si{2} \cdots \si{l} \in W$ 
for $0 \le l \le m$. For a GGMS datum 
$\mu_{\bullet}=(\mu_{w})_{w \in W}$, define integers 
$\Ni{l}=\Ni{l}(\mu_{\bullet}) \in \BZ_{\ge 0}$, 
$1 \le l \le m$, via the following 
``length formula'' (see \cite[Eq.(8)]{Kam1}): 
%
%
\begin{equation} \label{eq:n}
\mu_{\wi{l}}-\mu_{\wi{l-1}}=\Ni{l} \wi{l-1} \cdot h_{i_{l}}.
\end{equation}

\vsp

{\small 
\hspace{55mm}
\unitlength 0.1in
\begin{picture}( 15.0000,  3.3500)( 15.5000, -7.0000)
%
\special{pn 8}%
\special{sh 0.600}%
\special{ar 1600 600 50 50  0.0000000 6.2831853}%
\put(30.0000,-7.0000){\makebox(0,0)[lt]{$\mu_{\wi{l}}=\mu_{\wi{l-1}\si{l}}$}}%
\put(23.0000,-4.5000){\makebox(0,0){$\Ni{l}$}}%
%
\special{pn 8}%
\special{sh 0.600}%
\special{ar 3000 600 50 50  0.0000000 6.2831853}%
%
\special{pn 8}%
\special{pa 1600 600}%
\special{pa 3000 600}%
\special{fp}%
\put(16.0000,-7.0000){\makebox(0,0)[lt]{$\mu_{\wi{l-1}}$}}%
\end{picture}%
}

\vsp

%
\begin{dfn} \label{dfn:MV}
A GGMS datum $\mu_{\bullet}=(\mu_{w})_{w \in W}$ is 
said to be a Mirkovi\'c-Vilonen (MV) datum if 
it satisfies the following condition: 

(1) If two reduced words $\bi,\,\bj \in R(w_{0})$
are related by a $2$-move as in \eqref{eq:2move}, then 
$\Ni{l}=\Nj{l}$ for all $1 \le l \le m$ with 
$l \ne k+1,\,k+2$, and 
$\Ni{k+1}=\Nj{k+2}$, $\Ni{k+2}=\Nj{k+1}$. 

\vspace{5mm}

\newcommand{\vertexa}{
  $\mu_{\wi{k+2}}=\mu_{\wi{k}s_is_j}=
   \mu_{\wj{k}s_js_i}=\mu_{\wj{k+2}}$
}

{\small 
\hspace*{-20mm}
\unitlength 0.1in
\begin{picture}( 44.6000, 24.0000)(  4.4000,-28.0000)
%
\special{pn 8}%
\special{pa 4200 800}%
\special{pa 4200 400}%
\special{fp}%
%
\special{pn 8}%
\special{pa 4200 2400}%
\special{pa 4200 2800}%
\special{fp}%
%
\special{pn 8}%
\special{sh 0.600}%
\special{ar 4200 800 50 50  0.0000000 6.2831853}%
%
\special{pn 8}%
\special{sh 0.600}%
\special{ar 4200 2400 50 50  0.0000000 6.2831853}%
\put(35.0000,-16.0000){\makebox(0,0)[rb]{$\mu_{\wi{k+1}}=\mu_{\wi{k}s_{i}}$}}%
\put(43.0000,-24.5000){\makebox(0,0)[lt]{$\mu_{\wi{k}}=\mu_{\wj{k}}$}}%
%
\special{pn 8}%
\special{pa 4200 2400}%
\special{pa 4800 1600}%
\special{fp}%
\special{pa 4800 1600}%
\special{pa 4200 800}%
\special{fp}%
\special{pa 4200 800}%
\special{pa 3600 1600}%
\special{fp}%
\special{pa 3600 1600}%
\special{pa 4200 2400}%
\special{fp}%
%
\special{pn 8}%
\special{sh 0.600}%
\special{ar 4800 1600 50 50  0.0000000 6.2831853}%
%
\special{pn 8}%
\special{sh 0.600}%
\special{ar 3600 1600 50 50  0.0000000 6.2831853}%
\put(49.0000,-16.0000){\makebox(0,0)[lb]{$\mu_{\wj{k+1}}=\mu_{\wj{k}s_{j}}$}}%
\put(38.0000,-20.0000){\makebox(0,0)[rt]{$\Ni{k+1}$}}%
\put(38.0000,-12.0000){\makebox(0,0)[rb]{$\Ni{k+2}$}}%
\put(46.0000,-20.0000){\makebox(0,0)[lt]{$\Nj{k+1}$}}%
\put(46.0000,-12.0000){\makebox(0,0)[lb]{$\Nj{k+2}$}}%
\put(43.0000,-8.0000){\makebox(0,0)[lb]{\vertexa}}%
\end{picture}%
}

\vsp

(2) If two reduced words $\bi,\,\bj \in R(w_{0})$
are related by a $3$-move as in \eqref{eq:3move}, then 
$\Ni{l}=\Nj{l}$ for all $1 \le l \le m$ with 
$l \ne k+1,\,k+2,\,k+3$, and 
%
%
\begin{equation} \label{eq:3bm}
\begin{cases}
\Nj{k+1}=\Ni{k+2}+\Ni{k+3}-\min \bigl(\Ni{k+1},\, \Ni{k+3}\bigr), \\[1.5mm]
\Nj{k+2}=\min \bigl(\Ni{k+1},\, \Ni{k+3}\bigr), \\[1.5mm]
\Nj{k+3}=\Ni{k+1}+\Ni{k+2}-\min \bigl(\Ni{k+1},\, \Ni{k+3}\bigr).
\end{cases}
\end{equation}

\vspace{5mm}

\newcommand{\vertexb}{
  $\mu_{\wi{k+3}}=\mu_{\wi{k}s_is_js_i}=
   \mu_{\wj{k}s_js_is_j}=\mu_{\wj{k+3}}$
}

{\small 
\hspace*{-22.5mm}
\unitlength 0.1in
\begin{picture}( 45.5000, 24.2000)(  1.5000,-28.0000)
%
\special{pn 8}%
\special{pa 4000 806}%
\special{pa 3400 1206}%
\special{fp}%
%
\special{pn 8}%
\special{pa 3400 1206}%
\special{pa 3400 2006}%
\special{fp}%
%
\special{pn 8}%
\special{pa 3400 2006}%
\special{pa 4000 2406}%
\special{fp}%
%
\special{pn 8}%
\special{pa 4000 2406}%
\special{pa 4600 2006}%
\special{fp}%
%
\special{pn 8}%
\special{pa 4600 2006}%
\special{pa 4600 1206}%
\special{fp}%
%
\special{pn 8}%
\special{pa 4600 1206}%
\special{pa 4000 806}%
\special{fp}%
%
\special{pn 8}%
\special{pa 4000 800}%
\special{pa 4000 400}%
\special{fp}%
%
\special{pn 8}%
\special{pa 4000 2400}%
\special{pa 4000 2800}%
\special{fp}%
%
\special{pn 8}%
\special{sh 0.600}%
\special{ar 4000 800 50 50  0.0000000 6.2831853}%
%
\special{pn 8}%
\special{sh 0.600}%
\special{ar 4600 1200 50 50  0.0000000 6.2831853}%
%
\special{pn 8}%
\special{sh 0.600}%
\special{ar 4600 2000 50 50  0.0000000 6.2831853}%
%
\special{pn 8}%
\special{sh 0.600}%
\special{ar 4000 2400 50 50  0.0000000 6.2831853}%
%
\special{pn 8}%
\special{sh 0.600}%
\special{ar 3400 2000 50 50  0.0000000 6.2831853}%
%
\special{pn 8}%
\special{sh 0.600}%
\special{ar 3400 1200 50 50  0.0000000 6.2831853}%
\put(41.5000,-5.5000){\makebox(0,0)[lb]{\vertexb}}%
\put(37.0000,-9.5000){\makebox(0,0)[rb]{$\Ni{k+3}$}}%
\put(44.0000,-10.0000){\makebox(0,0)[lb]{$\Nj{k+3}$}}%
\put(48.5000,-16.0000){\makebox(0,0){$\Nj{k+2}$}}%
\put(31.5000,-16.0000){\makebox(0,0){$\Ni{k+2}$}}%
\put(37.0000,-22.5000){\makebox(0,0)[rt]{$\Ni{k+1}$}}%
\put(43.0000,-22.5000){\makebox(0,0)[lt]{$\Nj{k+1}$}}%
\put(47.0000,-13.5000){\makebox(0,0)[lb]{$\mu_{\wj{k+2}}=\mu_{\wj{k}s_js_i}$}}%
\put(47.0000,-21.5000){\makebox(0,0)[lb]{$\mu_{\wj{k+1}}=\mu_{\wj{k}s_j}$}}%
\put(43.0000,-26.5000){\makebox(0,0)[lt]{$\mu_{\wi{k}}=\mu_{\wj{k}}$}}%
\put(33.0000,-21.5000){\makebox(0,0)[rb]{$\mu_{\wi{k+1}}=\mu_{\wi{k}s_i}$}}%
\put(33.0000,-13.5000){\makebox(0,0)[rb]{$\mu_{\wi{k+2}}=\mu_{\wi{k}s_is_j}$}}%
%
\special{pn 8}%
\special{pa 4400 650}%
\special{pa 4100 750}%
\special{fp}%
\special{sh 1}%
\special{pa 4100 750}%
\special{pa 4170 748}%
\special{pa 4152 734}%
\special{pa 4158 710}%
\special{pa 4100 750}%
\special{fp}%
%
\special{pn 8}%
\special{pa 4400 2600}%
\special{pa 4100 2450}%
\special{fp}%
\special{sh 1}%
\special{pa 4100 2450}%
\special{pa 4152 2498}%
\special{pa 4148 2474}%
\special{pa 4170 2462}%
\special{pa 4100 2450}%
\special{fp}%
\end{picture}%
}
\end{dfn}

The pseudo-Weyl polytope $P(\mu_{\bullet})$ 
with GGMS datum $\mu_{\bullet}=(\mu_{w})_{w \in W}$ 
(see \eqref{eq:poly}) is called a Mirkovi\'c-Vilonen (MV) 
polytope if the GGMS datum $\mu_{\bullet}=(\mu_{w})_{w \in W}$ 
is an MV datum. We denote by $\mv$ the set of all MV polytopes. 

\begin{rem}
In the definition above of MV polytopes, 
we first introduced the notion of MV datum  
(which was not introduced in \cite{Kam1}, \cite{Kam2}), and then 
called the associated pseudo-Weyl polytope (by \eqref{eq:poly})
an MV polytope. This definition of MV polytopes 
is equivalent to the one in \cite{Kam1}, \cite{Kam2}, 
which is easily seen by an argument in the proof of 
\cite[Proposition~5.4]{Kam1} (see the comment 
following \cite[Theorem~7.1]{Kam1}). 
\end{rem}

%
\subsection{Crystals $\mv(\infty)$ and $\mv(\lambda)$.}
\label{subsec:cry-MV}

For $j \in I$ and an MV datum
$\mu_{\bullet}=(\mu_{w})_{w \in W}$, 
we denote by $f_{j}\mu_{\bullet}$ 
(resp., $e_{j}\mu_{\bullet}$ if $\mu_{e} \ne \mu_{s_{j}}$)
a unique MV datum $\mu_{\bullet}'=(\mu_{w}')_{w \in W}$ 
such that $\mu_{e}'=\mu_{e}-h_{j}$ 
(resp., $\mu_{e}'=\mu_{e}+h_{j}$), and such that 
$\mu_{w}'=\mu_{w}$ for all $w \in W$ with $s_{j}w < w$ 
(see \cite[Theorem~3.5]{Kam2} and its proof); 
note that $\mu_{w_{0}}'=\mu_{w_{0}}$ and 
$\mu_{s_j}'=\mu_{s_j}$. 

Now, let $\mv(\infty)$ denote the 
set of MV polytopes $P=P(\mu_{\bullet})$ 
with GGMS (hence MV) datum $\mu_{\bullet}=(\mu_{w})_{w \in W}$ 
for which $\mu_{w_{0}}=0 \in \Fh_{\BR}$; 
note that, by the length formula, 
$\mu_{w} \in Q^{\vee}_{-}:=\sum_{j \in I} \BZ_{\le 0} h_{j}$
for all $w \in W$. 
Following \cite[\S\S3.3, 3.5, and 3.6]{Kam2}, 
we endow $\mv(\infty)$ 
with a crystal structure for $U_{q}(\Fg^{\vee})$ 
(due to Lusztig \cite{Lu2} and Berenstein-Zelevinsky \cite{BZ}) as follows. 
Let $P=P(\mu_{\bullet}) \in \mv(\infty)$ be an MV polytope 
with GGMS datum $\mu_{\bullet}=(\mu_{w})_{w \in W}$.
The weight $\wt(P)$ of $P$ is, by definition, 
equal to the vertex $\mu_{e} \in Q^{\vee}_{-}$. 
For each $j \in I$, 
we define the (lowering) Kashiwara operator
$f_{j}:\mv(\infty) \cup \{\bzero\} \rightarrow 
 \mv(\infty) \cup \{\bzero\}$ and 
the (raising) Kashiwara operator
$e_{j}:\mv(\infty) \cup \{\bzero\} \rightarrow 
 \mv(\infty) \cup \{\bzero\}$ by: 
\begin{align*}
e_{j}\bzero=f_{j}\bzero & :=\bzero, \\
f_{j}P=f_{j}P(\mu_{\bullet}) & :=P(f_{j}\mu_{\bullet}), \\[3mm]
e_{j}P=e_{j}P(\mu_{\bullet}) & :=
\begin{cases}
 P(e_{j}\mu_{\bullet}) & \text{if $\mu_{e} \ne \mu_{s_{j}}$}, \\[1.5mm]
 \bzero & \text{otherwise}, 
 \end{cases}
\end{align*}
where $\bzero$ is an additional element, 
not contained in $\mv(\infty)$. We set 
$\ve_{j}(P):=
 \max \bigl\{N \ge 0 \mid e_{j}^{N}P \ne \bzero \bigr\}$, and 
$\vp_{j}(P):=
 \pair{\alpha_{j}}{\wt(P)}+\ve_{j}(P)$. 

%
\begin{thm}[{\cite[\S3.3 and \S3.6]{Kam2}}] \label{thm:kam-inf}
The set $\mv(\infty)$, together with the maps 
$\wt$, $e_{j},\,f_{j} \ (j \in I)$, and 
$\ve_{j},\,\vp_{j} \ (j \in I)$ above, is 
a crystal for $U_{q}(\Fg^{\vee})$. 
Moreover, there exists a unique isomorphism 
$\Psi:\CB(\infty) 
 \stackrel{\sim}{\rightarrow} 
 \mv(\infty)$ of crystals for $U_{q}(\Fg^{\vee})$. 
\end{thm}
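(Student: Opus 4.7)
The plan is to split the theorem into its two assertions: first, that the prescribed data on $\mv(\infty)$ define a crystal; and second, that this crystal is (uniquely) isomorphic to $\CB(\infty)$. For the first assertion, the well-definedness of $f_j$ and $e_j$ already rests on Kamnitzer's existence-uniqueness theorem for the modified MV datum (\cite[Theorem~3.5]{Kam2}), which we simply cite. Given that, the weight identities $\wt(f_j P) = \wt(P) - h_j$ and $\wt(e_j P) = \wt(P) + h_j$ are direct from the prescription $\mu_e' = \mu_e \mp h_j$. The involutivity $e_j f_j P = P$ and $f_j e_j P = P$ (whenever the right-hand sides are nonzero) reduces to uniqueness of the modified datum: both sides agree on $\mu_e$ and on every $\mu_w$ with $s_j w < w$, and are MV data by construction. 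Finiteness of $\ve_j(P)$ is forced by $\mu_e \in Q^\vee_-$, and the relation $\vp_j(P) - \ve_j(P) = \pair{\alpha_j}{\wt(P)}$ is built into the definition of $\vp_j$.

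For the isomorphism $\Psi$ I would pass through Lusztig parametrizations. Fix $\bi = (i_1,\ldots,i_m) \in R(w_0)$. The length formula \eqref{eq:n} defines a map $\mv(\infty) \to \BZ_{\ge 0}^m$ sending $P(\mu_\bullet)$ to its Lusztig data $(\Ni{1},\ldots,\Ni{m})$; the inverse is obtained by using \eqref{eq:length} and \eqref{eq:n} to reconstruct $\mu_{\wi{l}}$ inductively along the chain $e = \wi{0}, \wi{1}, \ldots, \wi{m} = w_0$, and then propagating to all of $W$ using the GGMS property. The crucial point is that the MV-datum axioms in Definition~\ref{dfn:MV} for $2$-moves and $3$-moves are precisely the transition (tropical Pl\"ucker) relations between Lusztig parameters for the canonical basis of $U_q^-(\Fg^\vee)$, as established by Lusztig and Berenstein-Zelevinsky; combined with Remark~\ref{rem:braid}, this yields a well-defined bijection $\mv(\infty) \leftrightarrow \CB(\infty)$ independent of $\bi$. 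To identify crystal operators, choose $\bi$ with $i_1 = j$: on Lusztig data the operator $f_j$ on $\CB(\infty)$ increments the first coordinate by one, and by the prescription $\mu_e' = \mu_e - h_j$ together with the invariance $\mu_w' = \mu_w$ for $s_j w < w$, the same is true for the $f_j$ defined on $\mv(\infty)$; the argument for $e_j$ is symmetric.

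Uniqueness of $\Psi$ comes from the observation that $\mv(\infty)$ has a unique element of weight $0$, namely the trivial polytope with $\mu_w \equiv 0$, which generates the crystal under iterated applications of the $f_j$; any crystal isomorphism is therefore forced to send the highest weight element of $\CB(\infty)$ to this polytope. The main technical obstacle is the $3$-move case: one must verify that the $\min$-formula \eqref{eq:3bm} agrees with the explicit rank-$2$ transition rule between Lusztig parameters in $U_q^-$ of type $A_2$. Once this rank-$2$ computation is accepted (it is the heart of Kamnitzer's argument), the general statement follows by the braid connectivity recorded in Remark~\ref{rem:braid}, and the crystal-structure comparison in the previous paragraph upgrades the set-theoretic bijection to the desired isomorphism $\Psi$.
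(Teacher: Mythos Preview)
The paper does not give a proof of this theorem: it is stated purely as a citation of Kamnitzer's results \cite[\S3.3 and \S3.6]{Kam2}, and no argument is supplied in the text. Your sketch is therefore not being compared against anything in the present paper.

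That said, what you have written is an accurate summary of the strategy in \cite{Kam2}: the bijection $\mv(\infty)\leftrightarrow\CB(\infty)$ is set up via Lusztig data along a fixed $\bi\in R(w_0)$, the MV-datum axioms of Definition~\ref{dfn:MV} are identified with the Lusztig--Berenstein--Zelevinsky transition rules under braid moves, and the crystal operators are matched by choosing $\bi$ with $i_1=j$ so that $f_j$ acts by incrementing $\Ni{1}$. Your remark that the rank-$2$ (type $A_2$) computation underlying \eqref{eq:3bm} is the technical core is exactly right, and uniqueness via the single weight-zero element (Remark~\ref{rem:inf}) is the standard argument. So your proposal is correct and is essentially the proof in the cited reference; there is simply nothing in the present paper to compare it to.
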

%
%
\begin{rem} \label{rem:inf}
Define a GGMS datum
$\mu_{\bullet}=(\mu_{w})_{w \in W}$ by: 
$\mu_{w}=0 \in \Fh_{\BR}$ for all $w \in W$. 
It is obvious that 
$\mu_{\bullet}$ is an MV datum, and 
the MV polytope $P_{0}:=P(\mu_{\bullet})$ is 
an element of $\mv(\infty)$ whose weight 
is $0 \in \Fh_{\BR}$. 
Therefore, under the isomorphism $\Psi$ of 
Theorem~\ref{thm:kam-inf}, 
the element $u_{\infty} \in \CB(\infty)$ 
corresponding to the identity element 
$1 \in U_{q}^{-}(\Fg^{\vee})$ is sent to 
the MV polytope $P_{0} \in \mv(\infty)$.
\end{rem}

Let $\lambda \in X_{\ast}(T) \subset \Fh_{\BR}$ 
be a dominant coweight. 
Let $\mv(\lambda)$ denote the 
set of MV polytopes $P=P(\mu_{\bullet})$ with 
GGMS (hence MV) datum $\mu_{\bullet}=(\mu_{w})_{w \in W}$ such that
$\mu_{w_{0}}=\lambda$ and such that 
$P$ is contained in the convex hull $\Conv (W \cdot \lambda)$
of the $W$-orbit $W \cdot \lambda \subset \Fh_{\BR}$; 
note that, by the length formula, 
$\mu_{w} \in \lambda+Q^{\vee}_{-}$ for all $w \in W$. 
Following \cite[\S6.2]{Kam2}, 
we endow $\mv(\lambda)$ 
with a crystal structure for $U_{q}(\Fg^{\vee})$
as follows. 
Let $P=P(\mu_{\bullet}) \in \mv(\lambda)$ be 
an MV polytope with GGMS datum 
$\mu_{\bullet}=(\mu_{w})_{w \in W}$.
The weight $\wt(P)$ of $P$ is, by definition, 
equal to the vertex $\mu_{e} \in \lambda+Q^{\vee}_{-}$. 
For each $j \in I$, 
we define the lowering Kashiwara operator
$f_{j}:\mv(\lambda) \cup \{\bzero\} \rightarrow 
 \mv(\lambda) \cup \{\bzero\}$ and 
the raising Kashiwara operator
$e_{j}:\mv(\lambda) \cup \{\bzero\} \rightarrow 
 \mv(\lambda) \cup \{\bzero\}$ by: 
\begin{align*}
e_{j}\bzero=f_{j}\bzero & :=\bzero, \\[3mm]
f_{j}P=f_{j}P(\mu_{\bullet}) & :=
\begin{cases}
 P(f_{j}\mu_{\bullet}) 
 & \text{if $P(f_{j}\mu_{\bullet}) \subset 
         \Conv (W \cdot \lambda)$}, \\[1.5mm]
 \bzero & \text{otherwise}, 
 \end{cases} \\[3mm]
e_{j}P=e_{j}P(\mu_{\bullet}) & :=
\begin{cases}
 P(e_{j}\mu_{\bullet}) & \text{if $\mu_{e} \ne \mu_{s_{j}}$}, \\[1.5mm]
 \bzero & \text{otherwise}, 
 \end{cases}
\end{align*}
where $\bzero$ is an additional element, 
not contained in $\mv(\lambda)$. 
We set 
$\ve_{j}(P):=
 \max \bigl\{N \ge 0 \mid e_{j}^{N}P \ne \bzero \bigr\}$, and 
$\vp_{j}(P):=
 \max \bigl\{N \ge 0 \mid f_{j}^{N}P \ne \bzero \bigr\}$. 

%
\begin{thm}[{\cite[Theorem~6.4]{Kam2}}] \label{thm:kam-int}
The set $\mv(\lambda)$, together with the maps 
$\wt$, $e_{j},\,f_{j} \ (j \in I)$, and 
$\ve_{j},\,\vp_{j} \ (j \in I)$ above, is 
a crystal for $U_{q}(\Fg^{\vee})$. 
Moreover, there exists a unique isomorphism 
$\Psi_{\lambda}:\CB(\lambda) 
 \stackrel{\sim}{\rightarrow} 
 \mv(\lambda)$ of crystals for $U_{q}(\Fg^{\vee})$. 
\end{thm}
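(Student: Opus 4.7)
The plan is to reduce the theorem to the already-established Theorem~\ref{thm:kam-inf} about $\mv(\infty)$, by exploiting the translation map $\tau_{\lambda}:\mv(\lambda) \to \mv(\infty)$ defined on the level of GGMS data by $(\mu_{w})_{w \in W} \mapsto (\mu_{w}-\lambda)_{w \in W}$. Since every $P \in \mv(\lambda)$ satisfies $\mu_{w_{0}}=\lambda$, the translated datum has $\mu_{w_{0}}-\lambda=0$, so by the length formula \eqref{eq:length} and the remark that $\mu_{w} \in \lambda + Q^{\vee}_{-}$, the image lies in $\mv(\infty)$. First I would check that $\tau_{\lambda}$ is an injection and identify its image as the subset
\[
\mv(\infty)_{\lambda}:=\bigl\{ P(\nu_{\bullet}) \in \mv(\infty) \mid P(\nu_{\bullet})+\lambda \subset \Conv(W \cdot \lambda) \bigr\},
\]
by rewriting the containment $P+\lambda \subset \Conv(W \cdot \lambda)$ as a family of affine inequalities on the vertices $\nu_{w}$, one for each facet of $\Conv(W \cdot \lambda)$.

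Next, I would verify directly that the maps $\wt,\,e_{j},\,f_{j},\,\ve_{j},\,\vp_{j}$ on $\mv(\lambda)$ satisfy the axioms of a crystal for $U_{q}(\Fg^{\vee})$. The operator $f_{j}$ is defined so as to be compatible with $\tau_{\lambda}$ whenever its value is nonzero, while $e_{j}$ is literally the restriction of the $\mv(\infty)$-operator (together with the cutoff when $\mu_{e}=\mu_{s_{j}}$, which is precisely the $\mv(\infty)$ vanishing rule, since a nonzero translate of an element of $\mv(\infty)_{\lambda}$ automatically lies in $\mv(\infty)_{\lambda}$). Consequently, the identities $\wt(f_{j}P)=\wt(P)-h_{j}$, $\wt(e_{j}P)=\wt(P)+h_{j}$, the compatibility $e_{j}f_{j}P=P$ whenever $f_{j}P \ne \bzero$ and $f_{j}e_{j}P=P$ whenever $e_{j}P \ne \bzero$, and the equality $\vp_{j}(P)=\pair{\alpha_{j}}{\wt(P)}+\ve_{j}(P)$ will all follow from the corresponding identities on $\mv(\infty)$ together with a finiteness statement: the $f_{j}$-string through any $P \in \mv(\lambda)$ is finite, because any MV polytope lying in $\Conv(W \cdot \lambda)$ has weights bounded below.

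To establish the isomorphism $\Psi_{\lambda}:\CB(\lambda) \stackrel{\sim}{\to} \mv(\lambda)$, I would invoke Kashiwara's tensor-product description of integrable highest weight crystals: there is an embedding $\CB(\lambda) \hookrightarrow \CB(\infty) \otimes T_{\lambda}$, $b \mapsto b \otimes t_{\lambda}$, whose image is characterized by $\vp_{j}(b \otimes t_{\lambda}) \ge 0$ for all $j$, with Kashiwara operators $\widetilde{f}_{j}(b \otimes t_{\lambda}) = (\widetilde{f}_{j}b) \otimes t_{\lambda}$ when the result remains in the image, and zero otherwise. Transporting this picture along $\Psi:\CB(\infty) \stackrel{\sim}{\to} \mv(\infty)$ of Theorem~\ref{thm:kam-inf}, the image of $\CB(\lambda)$ is sent bijectively onto $\mv(\infty)_{\lambda}$, and the Kashiwara operators on the image agree with those we have just defined on $\mv(\lambda)$ via $\tau_{\lambda}$. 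Composing with $\tau_{\lambda}^{-1}$ yields $\Psi_{\lambda}$. Uniqueness is automatic: $\CB(\lambda)$ is connected with distinguished highest weight element $u_{\lambda}$, which must be sent to the unique MV polytope in $\mv(\lambda)$ of weight $\lambda$, namely the single point $\{\lambda\}$.

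The main obstacle will be the precise matching between the geometric cutoff condition $P(f_{j}\mu_{\bullet}) \subset \Conv(W \cdot \lambda)$ defining $f_{j}$ on $\mv(\lambda)$ and the algebraic cutoff $\vp_{j}(b \otimes t_{\lambda}) > 0$ defining $\widetilde{f}_{j}$ on $\CB(\lambda) \subset \CB(\infty) \otimes T_{\lambda}$. Concretely, one must show that applying $f_{j}$ to the GGMS datum pushes the bottom vertex $\mu_{e}$ outside $\Conv(W \cdot \lambda)$ through the facet normal to $\alpha_{j}$ exactly when $\pair{\alpha_{j}}{\wt(P)}+\ve_{j}(P) \le 0$; this is a facet-by-facet computation using the length formula \eqref{eq:n} along any reduced word $\bi \in R(w_{0})$ beginning with $i_{1}=j$, together with the fact that the facets of $\Conv(W \cdot \lambda)$ correspond to the $W$-translates of the fundamental coweight hyperplanes. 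Once this compatibility is established, the remaining verifications are routine.
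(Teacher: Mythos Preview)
The paper does not prove this theorem; it is quoted verbatim as \cite[Theorem~6.4]{Kam2} and used as a black box throughout. There is therefore nothing in the paper to compare your proposal against.

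As a sketch of Kamnitzer's original argument your strategy is broadly right --- translate by $-\lambda$ to land in $\mv(\infty)$, invoke Theorem~\ref{thm:kam-inf}, and then match the containment condition $P \subset \Conv(W\cdot\lambda)$ with Kashiwara's characterization of the image of $\CB(\lambda)$ inside $\CB(\infty)$. However, the specific characterization you state is incorrect: the image of $\CB(\lambda)$ in $\CB(\infty)\otimes T_{\lambda}$ is \emph{not} cut out by the conditions $\vp_{j}(b\otimes t_{\lambda})\ge 0$ for all $j\in I$. Kashiwara's theorem says instead that $b\in\CB(\infty)$ lies in the image if and only if $\ve_{j}^{\ast}(b)\le \pair{\alpha_{j}}{\lambda}$ for all $j$, where $\ve_{j}^{\ast}$ is computed via the $\ast$-involution. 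On the MV-polytope side this condition reads off the edge lengths at the \emph{top} vertex $\mu_{w_{0}}$ (namely $\Ni{m}$ for a reduced word ending in $j$), not at the bottom vertex $\mu_{e}$; it is these top edge lengths that control whether $P$ fits inside $\Conv(W\cdot\lambda)$. Your final ``obstacle'' paragraph is therefore aimed at the wrong part of the polytope: the facet-by-facet computation you describe, using a reduced word beginning with $i_{1}=j$ and the edge $\mu_{e}\to\mu_{s_{j}}$, governs when $f_{j}$ vanishes on an element already known to lie in $\mv(\lambda)$, not when a given element of $\mv(\infty)$ lies in $\mv(\infty)_{\lambda}$ to begin with.
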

%
%
\begin{rem} \label{rem:int}
(1) Define a GGMS datum $\mu_{\bullet}=(\mu_{w})_{w \in W}$ by: 
$\mu_{w}=\lambda \in X_{\ast}(T) \subset \Fh_{\BR}$ for all $w \in W$. 
It is obvious that 
$\mu_{\bullet}$ is an MV datum, and 
the MV polytope $P_{\lambda}:=P(\mu_{\bullet})$ is 
an element of $\mv(\lambda)$ whose weight 
is $\lambda \in X_{\ast}(T)$. 
Therefore, under the isomorphism $\Psi_{\lambda}$ of 
Theorem~\ref{thm:kam-int}, the highest weight element 
$u_{\lambda} \in \CB(\lambda)$ of weight $\lambda$ is sent to 
the MV polytope $P_{\lambda} \in \mv(\lambda)$.

(2) We know from \cite{A} that the polytope
$P_{w_{0} \cdot \lambda}:=\Conv (W \cdot \lambda)$ 
is an element of $\mv(\lambda)$, and is the lowest weight element 
of weight $w_{0} \cdot \lambda \in X_{\ast}(T)$; 
in fact, the GGMS (hence MV) datum of $P_{w_{0} \cdot \lambda}$ 
is given by: $\mu_{w}=ww_{0} \cdot \lambda \in \Fh_{\BR}$, $w \in W$ 
(see \cite{Kam1} and also \S\ref{subsec:extremal} below). 
Therefore, under the isomorphism $\Psi_{\lambda}$ of 
Theorem~\ref{thm:kam-int}, the lowest weight element 
$u_{w_{0} \cdot \lambda} \in \CB(\lambda)$ 
of weight $w_{0} \cdot \lambda$ is sent to 
the MV polytope $P_{w_{0} \cdot \lambda} \in \mv(\lambda)$.
\end{rem}

%
\section{
  MV polytopes lying in a Demazure crystal and 
  an opposite Demazure crystal.
}
\label{sec:demazure}

We fix (once and for all) 
an arbitrary dominant coweight 
$\lambda \in X_{\ast}(T) \subset \Fh_{\BR}$. 

%
\subsection{Demazure crystals.}
\label{subsec:demazure}
Let $x \in W$. 
The Demazure module $V_{x}(\lambda)$ is defined to be 
the $U_{q}^{+}(\Fg^{\vee})$-submodule of $V(\lambda)$ 
generated by the one-dimensional weight space 
$V(\lambda)_{x \cdot \lambda}  \subset V(\lambda)$ of 
weight $x \cdot \lambda \in X_{\ast}(T) \subset \Fh_{\BR}$. 
Recall from \cite{Kas} that 
the Demazure crystal $\CB_{x}(\lambda)$ is 
a subset of $\CB(\lambda)$ such that 
%
%
\begin{equation} \label{eq:dem-mod}
V(\lambda) \supset 
V_{x}(\lambda) = 
 \bigoplus_{b \in \CB_{x}(\lambda)}
 \BC(q) G_{\lambda}(b),
\end{equation}
where $G_{\lambda}(b)$, $b \in \CB(\lambda)$, 
form the lower global basis of $V(\lambda)$.
%
%
\begin{rem} \label{rem:demcos}
If $x,\,y \in W$ satisfies 
$x \cdot \lambda=y \cdot \lambda$, then 
we have $V_{x}(\lambda)=V_{y}(\lambda)$ since 
$V(\lambda)_{x \cdot \lambda}=
 V(\lambda)_{y \cdot \lambda}$. 
Therefore, it follows from \eqref{eq:dem-mod} that 
$\CB_{x}(\lambda)=\CB_{y}(\lambda)$. 
\end{rem}

We know from \cite[Proposition~3.2.3]{Kas} that 
the Demazure crystals $\CB_{x}(\lambda)$, 
$x \in W$, are characterized by the inductive relations: 
%
%
\begin{align}
& \CB_{e}(\lambda)=
   \bigl\{u_{\lambda}\bigr\}, \label{eq:demint1} \\[1.5mm]
& \CB_{x}(\lambda)=
    \bigcup_{N \ge 0} f_{j}^{N} 
     \CB_{s_{j}x}(\lambda) 
     \setminus \{\bzero\}
  \quad \text{for $x \in W$ and $j \in I$ with $s_{j}x < x$}. 
  \label{eq:demint2}
\end{align}

In addition, 
we know from \cite[Proposition~3.2.5]{Kas} that 
there exists a unique family $\CB_{x}(\infty)$, $x \in W$, of 
subsets of $\CB(\infty)$ satisfying 
the inductive relations: 
%
%
\begin{align}
& \CB_{e}(\infty)=
   \bigl\{u_{\infty}\bigr\}, \label{eq:deminf1} \\[1.5mm]
& \CB_{x}(\infty)=
    \bigcup_{N \ge 0} f_{j}^{N} 
     \CB_{s_{j}x}(\infty)
  \quad \text{for $x \in W$ and $j \in I$ with $s_{j}x < x$}; 
  \label{eq:deminf2}
\end{align}
for each $x \in W$, the subset $\CB_{x}(\infty) \subset \CB(\infty)$ 
is also called the Demazure crystal associated to $x$. 

%
\subsection{Condition for an MV polytope to lie in a Demazure crystal.}
\label{subsec:main}

Let us fix an arbitrary $x \in W$, and denote by $p$ 
the length $\ell(xw_{0})$ of $xw_{0} \in W$. 
For each reduced word 
$\bi=(i_{1},\,i_{2},\,\dots,\,i_{m}) \in R(w_{0})$, 
with $m=\ell(w_{0})$, we set 
\begin{equation*}
S(xw_{0},\,\bi)=\left\{
\begin{array}{l|l}
 (a_{1},\,a_{2},\,\dots,\,a_{p}) \in [1,\,m]^{p} \ & \ 
\begin{array}{l}
1 \le a_{1} < a_{2} < \cdots < a_{p} \le m \\[1.5mm]
\si{a_1}\si{a_2} \cdots \si{a_p}=xw_{0}
\end{array}
\end{array}
\right\},
\end{equation*}
where $[1,\,m]:=\bigl\{a \in \BZ \mid 1 \le a \le m\bigr\}$.
Note that for each sequence 
$(a_{1},\,a_{2},\,\dots,\,a_{p}) \in S(xw_{0},\,\bi)$, 
$xw_{0}=\si{a_1}\si{a_2} \cdots \si{a_p}$ is a reduced expression 
since $p=\ell(xw_{0})$.
Let $\mv_{x}(\lambda)$ (resp., $\mv_{x}(\infty)$) 
denote the subset of $\mv(\lambda)$ 
(resp., $\mv(\infty)$) consisting of those elements 
$P(\mu_{\bullet})$ with GGMS datum 
$\mu_{\bullet}=(\mu_{w})_{w \in W}$
which satisfy the condition: 

\vsp

(Dem.) for some $\bi \in R(w_{0})$, there exists a sequence 
$\ba=(a_{1},\,a_{2},\,\dots,\,a_{p}) \in S(xw_{0},\,\bi)$ 
such that $\Ni{a_{q}}=\Ni{a_{q}}(\mu_{\bullet})=0$ 
for all $1 \le q \le p$ 
(for the definition of $\Ni{l} \in \BZ_{\ge 0}$, 
$1 \le l \le m$, see \eqref{eq:n}). 

\vsp

The following is the first main result
of this paper; its proof will be given 
in the next subsection. 
%
%
\begin{thm} \label{thm:main}
Keep the notation above. 

{\rm (1)} 
Under the isomorphism 
$\Psi_{\lambda}:\CB(\lambda) 
 \stackrel{\sim}{\rightarrow} 
 \mv(\lambda)$ of Theorem~\ref{thm:kam-int}, 
the Demazure crystal 
$\CB_{x}(\lambda) \subset \CB(\lambda)$ 
associated to $x \in W$ is mapped to 
$\mv_{x}(\lambda)$, that is, 
\begin{equation*}
\Psi_{\lambda}(\CB_{x}(\lambda))=\mv_{x}(\lambda).
\end{equation*}

{\rm (2)} 
Under the isomorphism 
$\Psi:\CB(\infty) \stackrel{\sim}{\rightarrow} \mv(\infty)$ 
of Theorem~\ref{thm:kam-inf}, 
the Demazure crystal $\CB_{x}(\infty) \subset \CB(\infty)$ 
associated to $x \in W$ is mapped to 
$\mv_{x}(\infty)$, that is,
\begin{equation*}
\Psi(\CB_{x}(\infty))=\mv_{x}(\infty).
\end{equation*}
\end{thm}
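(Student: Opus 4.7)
The plan is to prove both parts of Theorem~\ref{thm:main} simultaneously by induction on $\ell(x)$, driven by the inductive characterizations \eqref{eq:demint1}\,--\,\eqref{eq:demint2} and \eqref{eq:deminf1}\,--\,\eqref{eq:deminf2} of the Demazure crystals and the fact that $\Psi_{\lambda}$, $\Psi$ are isomorphisms of crystals. The base case $x = e$ is immediate: since $xw_0 = w_0$ has length $p = m$, the set $S(w_0, \bi)$ consists of the single sequence $(1, 2, \ldots, m)$ for every $\bi$, and (Dem.) collapses to $\Ni{l}(\mu_\bullet) = 0$ for all $l$; by the length formula \eqref{eq:n}, this forces $\mu_w$ to be constant in $w$, pinning the MV polytope down to $P_\lambda$ (resp., $P_0$), in accord with $\CB_{e}(\lambda) = \{u_\lambda\}$ (resp., $\CB_{e}(\infty) = \{u_\infty\}$) via Remark~\ref{rem:int}\,(1) (resp., Remark~\ref{rem:inf}).

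For the inductive step, I fix $j \in I$ with $s_j x < x$ and choose a reduced word $\bi = (j, i_2, \ldots, i_m) \in R(w_0)$ that begins with $j$ (possible since $s_j w_0 < w_0$). After applying $\Psi_{\lambda}$ to \eqref{eq:demint2}, the task reduces to proving
\begin{equation*}
\mv_{x}(\lambda) \;=\; \bigcup_{N \ge 0} f_{j}^{N}\,\mv_{s_j x}(\lambda) \setminus \{\bzero\},
\end{equation*}
and analogously for $\mv(\infty)$. The crucial numerical input is that, with $\bi$ starting with $j$, the operator $f_{j}$ alters only $\mu_{e}$, so $\Ni{1}(f_{j}P) = \Ni{1}(P) + 1$ while $\Ni{l}(f_{j}P) = \Ni{l}(P)$ for every $l \ge 2$, and the standard identification $\ve_{j}(P) = \Ni{1}(P)$ holds for such $\bi$. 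For the inclusion $\supseteq$, I take $P = f_{j}^{N}Q$ with $Q \in \mv_{s_j x}(\lambda)$ and let $(b_1, \ldots, b_{p+1}) \in S(s_j x w_0, \bi)$ witness (Dem.) for $Q$; the strong exchange property, applied to $s_j \cdot s_j x w_0 = x w_0$, supplies a unique $q^{\ast}$ such that deleting $b_{q^{\ast}}$ yields a reduced subexpression in $S(x w_0, \bi)$, and the length comparison $\ell(s_j x w_0) = p + 1 > p = \ell(x w_0)$ forces the surviving subsequence to avoid position $1$, so $\Ni{b_q}(P) = \Ni{b_q}(Q) = 0$ for all $q \ne q^{\ast}$. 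For the reverse inclusion $\subseteq$, I set $Q := e_{j}^{\ve_{j}(P)}P$ so that $\Ni{1}(Q) = 0$ and $\Ni{l}(Q) = \Ni{l}(P)$ for $l \ge 2$; given a witness $(a_1, \ldots, a_p) \in S(x w_0, \bi)$ for $P$, the same length identity forces $a_1 \ge 2$, and then $(1, a_1, \ldots, a_p) \in S(s_j x w_0, \bi)$ certifies $Q \in \mv_{s_j x}(\lambda)$.

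The argument above is legitimate only once we know that (Dem.) is independent of the choice of $\bi \in R(w_0)$, since otherwise one cannot insist on $\bi$ starting with $j$. This invariance is the principal technical obstacle, and I establish it separately by appealing to Remark~\ref{rem:braid} and verifying preservation of (Dem.) under a single $2$-move and a single $3$-move. The $2$-move case is routine, as Definition~\ref{dfn:MV}\,(1) simply permutes $\Ni{k+1}$ and $\Ni{k+2}$ while the subsequence $\ba$ is correspondingly relabelled according to which of $k+1, k+2$ it contains. The $3$-move case requires a case analysis indexed by $T := \ba \cap \{k+1, k+2, k+3\}$; for each admissible $T$, I choose a replacement $T' \subset \{k+1, k+2, k+3\}$ (dictated by the braid relation $s_i s_j s_i = s_j s_i s_j$) and verify via \eqref{eq:3bm} that the $\Nj{}$-values at the positions of $T'$ all vanish. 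The key observations are: whenever $\Ni{k+r} = 0$ for some $r \in \{1, 3\}$, the $\min$ in \eqref{eq:3bm} collapses to $0$, supplying the required zeros; in the delicate case $T = \{k+2\}$ one has $\Nj{k+1} = \max\bigl(0, \Ni{k+3} - \Ni{k+1}\bigr)$ and $\Nj{k+3} = \max\bigl(0, \Ni{k+1} - \Ni{k+3}\bigr)$, so at least one of the two candidate replacements $T' = \{k+1\}$ or $T' = \{k+3\}$ yields a vanishing $\Nj{}$; and the subcase $T = \{k+1, k+3\}$ with $k+2 \notin \ba$ is vacuous, since such a $\ba$ would feature the consecutive factors $s_i s_i = e$, contradicting reducedness.

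Part (2) for $\mv(\infty)$ follows by the same induction, now invoking \eqref{eq:deminf1}\,--\,\eqref{eq:deminf2} in place of \eqref{eq:demint1}\,--\,\eqref{eq:demint2}; the only simplification is that $f_{j}$ never produces $\bzero$ on $\mv(\infty)$, so no convex-hull restriction on $P = f_{j}^{N}Q$ needs to be checked.
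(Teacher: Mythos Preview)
Your proof is correct and follows essentially the same approach as the paper's own argument: the paper also first establishes the $\bi$-independence of condition (Dem.) via a $2$-move/$3$-move case analysis (their Proposition~\ref{prop:every}, with the identical treatment of the delicate $T=\{k+2\}$ subcase), and then runs the induction on $\ell(x)$ using a reduced word starting with $j$, proving the two inclusions exactly as you do (their Proposition~\ref{prop:demply}, Claims~\ref{dp-c1} and~\ref{dp-c2}). The only organizational difference is that the paper packages the invariance and the inductive relation as two separate propositions before assembling them, whereas you present the whole thing inline.
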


For each reduced word 
$\bi=(i_{1},\,i_{2},\,\dots,\,i_{m}) \in R(w_{0})$, 
we set 
\begin{equation*}
\ha{S}(xw_{0},\,\bi) 
=
\left\{
 \begin{array}{l|l}
 (a_{1},\,a_{2},\,\dots,\,a_{l}) \in [1,\,m]^{l} \ & \ 
\begin{array}{l}
0 \le l \le m \\[1.5mm]
1 \le a_{1} < a_{2} < \cdots < a_{l} \le m \\[1.5mm]
\si{a_1}\si{a_2} \cdots \si{a_l}=xw_{0}
\end{array}
\end{array}
\right\};
\end{equation*}
note that we can replace the condition 
``$0 \le l \le m$'' in the definition above 
with ``$p \le l \le m$'' since $\ell(xw_{0})=p$. 
It is obvious that $S(xw_{0},\,\bi) \subset \ha{S}(xw_{0},\,\bi)$, 
since the set $S(xw_{0},\,\bi)$ is identical to the subset of 
$\ha{S}(xw_{0},\,\bi)$ consisting of those elements 
$(a_{1},\,a_{2},\,\dots,\,a_{l})$ for which $l=p$. 
Also, for each sequence 
$(a_{1},\,a_{2},\,\dots,\,a_{l}) \in \ha{S}(xw_{0},\,\bi)$, 
there exists a subsequence of it 
which is contained in $S(xw_{0},\,\bi)$ 
(see, for example, \cite[Chap.\,5, \S4, Corollary~2]{MP}).
Using these facts, we obtain immediately
the following corollary of Theorem~\ref{thm:main}.
%
%
\begin{cor} \label{cor:main}
The Demazure crystal $\mv_{x}(\lambda)$
{\rm(}resp., $\mv_{x}(\infty)${\rm)}
is identical to the subset of $\mv(\lambda)$ 
{\rm(}resp., $\mv(\infty)${\rm)} consisting of those elements 
$P(\mu_{\bullet}) \in \mv(\lambda)$
{\rm(}resp., $\in \mv(\infty)${\rm)}
with GGMS datum $\mu_{\bullet}=(\mu_{w})_{w \in W}$
which satisfy the following condition\,{\rm:} 
for some $\bi \in R(w_{0})$, there exists a sequence 
$(a_{1},\,a_{2},\,\dots,\,a_{l}) \in \ha{S}(xw_{0},\,\bi)$ 
such that $\Ni{a_{q}}=\Ni{a_{q}}(\mu_{\bullet})=0$ 
for all $1 \le q \le l$. 
\end{cor}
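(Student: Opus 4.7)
The plan is to prove the corollary by a direct double inclusion, relying entirely on Theorem~\ref{thm:main} plus the two elementary observations about the sets $S(xw_{0},\,\bi)$ and $\ha{S}(xw_{0},\,\bi)$ that are stated just before the corollary. Essentially, the corollary is a reformulation of the ``Dem.''-condition that trades the fixed length $p=\ell(xw_{0})$ for an arbitrary length $l$ with $p \le l \le m$, and the subword property of reduced expressions lets us pass between the two.

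For the inclusion $\mv_{x}(\lambda) \supseteq$ (the set defined in the corollary), suppose $P=P(\mu_{\bullet})$ satisfies the condition of the corollary, so that for some $\bi=(i_{1},\dots,i_{m}) \in R(w_{0})$ there is $(a_{1},\dots,a_{l}) \in \ha{S}(xw_{0},\,\bi)$ with $\Ni{a_{q}}(\mu_{\bullet})=0$ for all $1\le q\le l$. By the cited subword property (\cite[Chap.\,5, \S4, Corollary~2]{MP}), the (possibly non-reduced) expression $\si{a_{1}}\si{a_{2}}\cdots\si{a_{l}}=xw_{0}$ admits a subsequence of indices $1 \le q_{1} < q_{2} < \cdots < q_{p} \le l$ such that $\si{a_{q_{1}}}\si{a_{q_{2}}}\cdots\si{a_{q_{p}}}=xw_{0}$ is a reduced expression, i.e., $(a_{q_{1}},\dots,a_{q_{p}}) \in S(xw_{0},\,\bi)$. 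Since the $\Ni{a_{q_{r}}}(\mu_{\bullet})$ vanish as they are a subset of the original vanishing family, the condition (Dem.) holds for $P$, and hence $P \in \mv_{x}(\lambda)$ by Theorem~\ref{thm:main}\,(1).

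The reverse inclusion is immediate: if $P \in \mv_{x}(\lambda)$, then by Theorem~\ref{thm:main}\,(1) the condition (Dem.) gives a sequence $(a_{1},\dots,a_{p}) \in S(xw_{0},\,\bi)$, of length exactly $p$, with $\Ni{a_{q}}(\mu_{\bullet})=0$ for all $1\le q\le p$. Since $S(xw_{0},\,\bi) \subset \ha{S}(xw_{0},\,\bi)$ (as the subset cut out by $l=p$), this same sequence certifies membership in the set described in the corollary. The argument for $\mv_{x}(\infty)$ is verbatim, invoking Theorem~\ref{thm:main}\,(2) instead.

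There is essentially no obstacle here: both directions are formal, and the only non-trivial ingredient is the well-known subword property, which the excerpt already cites. The proof amounts to a one-line invocation in each direction once Theorem~\ref{thm:main} is in hand, so the corollary's role is merely to record a more flexible (and in practice often easier to verify) form of condition (Dem.).
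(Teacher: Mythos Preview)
Your proof is correct and follows exactly the approach the paper indicates: one inclusion uses $S(xw_{0},\bi) \subset \ha{S}(xw_{0},\bi)$, and the other uses the subword property to extract a reduced subsequence on which the vanishing persists. One small note: the invocations of Theorem~\ref{thm:main} are superfluous, since condition (Dem.) is the \emph{definition} of $\mv_{x}(\lambda)$ (resp., $\mv_{x}(\infty)$), not a consequence of the theorem.
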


%
\subsection{Proof of Theorem~\ref{thm:main}.}
\label{subsec:prf-main}

We keep the notation and assumptions of \S\ref{subsec:main}. 
The following proposition plays a key role in the proof of 
Theorem~\ref{thm:main}. 
%
%
\begin{prop} \label{prop:every}
Let $P(\mu_{\bullet})$ be an MV polytope in 
$\mv_{x}(\lambda)$ or $\mv_{x}(\infty)$ 
with GGMS datum $\mu_{\bullet}=(\mu_{w})_{w \in W}$. 
Then, for every $\bj \in R(w_{0})$, 
there exists a sequence 
$\bb=(b_{1},\,b_{2},\,\dots,\,b_{p}) \in S(xw_{0},\,\bj)$ 
such that $\Nj{b_q}=\Nj{b_q}(\mu_{\bullet})=0$ 
for all $1 \le q \le p$. 
\end{prop}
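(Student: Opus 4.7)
The plan is to reduce the proposition to a one-step invariance under braid moves. By hypothesis $(\mathrm{Dem.})$, the zero condition holds for \emph{some} $\bi \in R(w_{0})$, and by Remark~\ref{rem:braid} there is a chain $\bi = \bi_{0}, \bi_{1}, \ldots, \bi_{t} = \bj$ in which consecutive reduced words differ by a single $2$-move or $3$-move. Induction on $t$ reduces everything to the following one-step statement, which (by a slight abuse of notation) I state in terms of $\bi, \bj$ themselves: if $\bi, \bj \in R(w_{0})$ differ by one braid move and some $\ba \in S(xw_{0},\bi)$ has $\Ni{a_{q}} = 0$ for all $q$, then some $\bb \in S(xw_{0},\bj)$ has $\Nj{b_{q}} = 0$ for all $q$.

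For a $2$-move at positions $k+1, k+2$, Definition~\ref{dfn:MV}\,(1) swaps $\Ni{k+1}$ with $\Ni{k+2}$ and preserves all other $\Ni{l}$. Setting $A := \ba \cap \{k+1, k+2\}$, I would take $\bb = \ba$ when $|A| \in \{0,2\}$---using $s_{i}s_{j} = s_{j}s_{i}$ to keep the product equal to $xw_{0}$---and, when $|A| = 1$, replace the unique element $a_{q} \in A$ by the other index of $\{k+1, k+2\}$, which carries the same simple reflection in $\bj$. In all three cases, the zero condition follows from the swap, and monotonicity of $\bb$ is immediate from the fact that the neighbors $a_{q-1}, a_{q+1}$ of $a_{q}$ lie outside $\{k+1, k+2\}$.

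For a $3$-move at positions $k+1, k+2, k+3$, set $A := \ba \cap \{k+1, k+2, k+3\}$. A key preliminary observation is that the subword $\si{a_{1}} \cdots \si{a_{p}} = xw_{0}$ is automatically reduced (its length being $\ell(xw_{0}) = p$), which excludes $A = \{k+1, k+3\}$ on the grounds that two adjacent letters $s_{i}s_{i}$ cannot occur in a reduced word. In each remaining case I would exhibit $\bb$ explicitly: for $|A| \in \{0, 3\}$ take $\bb = \ba$; for $|A|=2$ substitute $\{k+1, k+2\} \leftrightarrow \{k+2, k+3\}$ inside $\ba$ according to the corresponding factorizations in the braid relation $s_{i}s_{j}s_{i} = s_{j}s_{i}s_{j}$; for $|A|=1$ with $a_{q} \in \{k+1, k+3\}$ replace $a_{q}$ by $k+2$; and for $|A|=1$ with $a_{q} = k+2$, replace $a_{q}$ by $k+1$ if $\Ni{k+1} \ge \Ni{k+3}$ and by $k+3$ otherwise. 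A direct substitution into \eqref{eq:3bm} verifies $\Nj{b_{q}} = 0$ in every case, and monotonicity of $\bb$ follows because the remaining entries of $\ba$ lie outside $\{k+1, k+2, k+3\}$.

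The main obstacle is the last subcase, $|A|=1$ with $a_{q} = k+2$: the $\min$ appearing in \eqref{eq:3bm} is nonlinear, so no uniform substitution always yields a zero, and the correct choice of $b_{q} \in \{k+1, k+3\}$ must be read off from the sign of $\Ni{k+1} - \Ni{k+3}$. Everything else is routine bookkeeping using the MV-datum axioms and the positions of $\ba$ inside $\bi$.
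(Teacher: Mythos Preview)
Your proposal is correct and follows essentially the same approach as the paper: reduce via Remark~\ref{rem:braid} to a single braid move, exclude the case $A=\{k+1,k+3\}$ by reducedness, and construct $\bb$ case-by-case using \eqref{eq:3bm}. The only cosmetic difference is in the delicate subcase $A=\{k+2\}$: the paper phrases the choice as ``take $b_q=k+1$ if $\Nj{k+1}=0$, otherwise $b_q=k+3$'', whereas you phrase it via the sign of $\Ni{k+1}-\Ni{k+3}$; since $\Ni{k+2}=0$ forces $\Nj{k+1}=\Ni{k+3}-\min(\Ni{k+1},\Ni{k+3})$ and $\Nj{k+3}=\Ni{k+1}-\min(\Ni{k+1},\Ni{k+3})$, the two rules are equivalent.
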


\begin{proof}
In view of Remark~\ref{rem:braid} along with
the definitions of 
$\mv_{x}(\lambda)$ and $\mv_{x}(\infty)$, 
it suffices to show the following claim. 

\begin{claim*}
Let $\bi \in R(w_{0})$ be such that 
there exists a sequence 
$\ba=(a_{1},\,a_{2},\,\dots,\,a_{p}) \in 
 S(xw_{0},\,\bi)$ for which 
$\Ni{a_q}=\Ni{a_q}(\mu_{\bullet})=0$ for all $1 \le q \le p$. 
Assume that $\bj \in R(w_{0})$ is related to 
$\bi \in R(w_{0})$ by a $2$-move or a $3$-move.
Then, there exists a sequence 
$\bb=(b_{1},\,b_{2},\,\dots,\,b_{p}) \in 
 S(xw_{0},\,\bj)$ for which 
$\Nj{b_q}=0$ for all $1 \le q \le p$. 
\end{claim*}

\noindent
{\it Proof of Claim.}
We give a proof in the case 
that $\bi$ and $\bj$ are related 
by a $3$-move as in \eqref{eq:3move}, i.e., 

\begin{equation*}
\begin{array}{l}
\bi=(i_{1},\,\dots,\,i_{k},\,i,\,j,\,i,\,i_{k+4},\,\dots,\,i_{m}), \\[1.5mm]
\bj=(i_{1},\,\dots,\,i_{k},\,j,\,i,\,j,\,i_{k+4},\,\dots,\,i_{m})
\end{array}
\end{equation*}
for some indices $i,\,j \in I$ with $a_{ij}=a_{ji}=-1$ and 
an integer $0 \le k \le m-3$; the proof for the case of $2$-move 
is similar (or, even simpler).
Now, for $0 \le l_{1},\,l_{2} \le m$, 
we denote by $[l_{1},\,l_{2}] \cap \ba$ 
the subsequence of $\ba$ consisting of 
those $a_{q}$'s such that 
$a_{q} \in [l_{1},\,l_{2}]:=
 \bigl\{l \in \BZ \mid l_{1} \le l \le l_{2}\bigr\}$; 
by convention, we set 
$[l_{1},\,l_{2}] \cap \ba := \emptyset$ 
if there is no $a_{q}$ such that $l_{1} \le a_{q} \le l_{2}$. 
Then we have
\begin{align*}
& [1,\,k] \cap \ba = 
  (a_{1},\,\dots,\,a_{u_0}), \\
& [k+1,\,k+3] \cap \ba = 
  (a_{u_0+1},\,\dots,\,a_{u_1}), \\
& [k+4,\,m] \cap \ba = 
  (a_{u_1+1},\,\dots,\,a_{p})
\end{align*}
for $0 \le u_{0} \le u_{1} \le p$. 
We set $\ba':=[k+1,\,k+3] \cap \ba$. 
Since $xw_{0}=\si{a_1}\si{a_2} \cdots \si{a_p}$ is 
a reduced expression and $i_{k+1}=i_{k+3}=i$, 
the subsequence $\ba'$ cannot be equal to $(k+1,\,k+3)$. 
Namely, the subsequence $\ba'$ is 
equal to one of the following: 
$\emptyset$, $(k+1)$, $(k+2)$, 
$(k+3)$, $(k+1,\,k+2)$, 
$(k+2,\,k+3)$, $(k+1,\,k+2,\,k+3)$. 
Note that 
if $\ba'= (k+2)$ (and hence $\Ni{k+2}=0$), 
then $\Nj{k+1}=0$ or $\Nj{k+3}=0$ by \eqref{eq:3bm}. 
We define a strictly increasing sequence 
$\bb=(b_{1},\,b_{2},\,\dots,\,b_{p}) \in [1,\,m]^{p}$ 
as follows: 
\begin{equation*}
b_{q}=a_{q} \quad 
 \text{for $1 \le q \le u_{0}$ and 
 $u_{1}+1 \le q \le p$}, 
\end{equation*}
\begin{align*}
& (b_{u_{0}+1},\,\dots,\,b_{u_{1}}) = 
\begin{cases}
\emptyset & 
  \text{if $\ba'=\emptyset$}, \\[1.5mm]
(k+2) & 
  \text{if $\ba'=(k+1)$ or $(k+3)$}, \\[1.5mm]
(k+1) & 
  \text{if $\ba'=(k+2)$ and $\Nj{k+1}=0$}, \\[1.5mm]
(k+3) & 
  \text{if $\ba'=(k+2)$ and $\Nj{k+1} \ne 0$, $\Nj{k+3}=0$}, \\[1.5mm]
(k+2,\,k+3) & 
  \text{if $\ba'=(k+1,\,k+2)$}, \\[1.5mm]
(k+1,\,k+2) & 
  \text{if $\ba'=(k+2,\,k+3)$}, \\[1.5mm]
(k+1,\,k+2,\,k+3) & 
  \text{if $\ba'=(k+1,\,k+2,\,k+3)$}.
\end{cases}
\end{align*}
Then, it follows 
that $\bb \in S(xw_{0},\,\bj)$.
Also, by using \eqref{eq:3bm}, we can easily verify that 
$\Nj{b_{q}}=0$ for all $1 \le q \le p$.
This proves the claim, completing the proof of 
the proposition.
\end{proof}
%
%
\begin{rem} \label{rem:every}
By Proposition~\ref{prop:every}, we can replace 
the phrase ``for some $\bi \in R(w_{0})$'' in 
the condition (Dem.) (and in Corollary~\ref{cor:main}) 
for $\mv_{x}(\lambda)$ and $\mv_{x}(\infty)$ 
with the phrase ``for every $\bi \in R(w_{0})$''. 
\end{rem}

%
\begin{prop} \label{prop:demply}
{\rm (1)} We have 
$\mv_{e}(\lambda)=\bigl\{P_{\lambda}\bigr\}$ and 
$\mv_{e}(\infty)=\bigl\{P_{0}\bigr\}$. 

{\rm (2)}
Let $x \in W$ and $j \in I$ be such that $s_{j}x < x$. 
Then, we have
%
%
\begin{align}
& \mv_{x}(\lambda)=
  \bigcup_{N \ge 0} 
  f_{j}^{N} \mv_{s_{j}x}(\lambda) \setminus \{\bzero\}, 
  \label{eq:demintp} \\[3mm]
& \mv_{x}(\infty)=
  \bigcup_{N \ge 0} 
  f_{j}^{N} \mv_{s_{j}x}(\infty).
  \label{eq:deminfp}
\end{align}
\end{prop}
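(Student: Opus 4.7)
The plan is to derive this proposition from the combinatorial definitions of $\mv_x(\lambda)$ and $\mv_x(\infty)$, by combining Remark~\ref{rem:every} (which lets us choose a convenient reduced word) with an explicit description of how the Kashiwara operators $e_j$ and $f_j$ alter the edge lengths $\Ni{l}$. For part~(1), when $x = e$ we have $p = \ell(w_0) = m$, so the only strictly increasing sequence of length $p$ in $[1, m]$ is $(1, 2, \ldots, m)$. Thus any $P(\mu_\bullet) \in \mv_e(\lambda)$ satisfies $\Ni{l}(\mu_\bullet) = 0$ for every $l$, and by Remark~\ref{rem:every} this vanishing of all edge lengths holds for every $\bi \in R(w_0)$. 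The length formula \eqref{eq:n} then forces $\mu_{\wi{l}} = \mu_{\wi{l-1}}$ along every reduced word, so $\mu_w$ is independent of $w$; combined with $\mu_{w_0} = \lambda$ (resp.\ $0$) this yields $P = P_\lambda$ (resp.\ $P = P_0$).

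For part~(2), I fix a reduced word $\bi = (j, i_2, \ldots, i_m) \in R(w_0)$ beginning with $j$; such a word exists since $s_j w_0 < w_0$. For each $l \ge 2$, the prefix $\wi{l-1}$ admits a reduced expression starting with $s_j$, so $s_j \wi{l-1} < \wi{l-1}$; hence by the description of $f_j$ in \S\ref{subsec:cry-MV}, $f_j$ preserves both $\mu_{\wi{l-1}}$ and $\mu_{\wi{l}}$ and therefore fixes $\Ni{l}$. For $l = 1$ we have $\wi{0} = e$ and $\wi{1} = s_j$; the operator $f_j$ decreases $\mu_e$ by $h_j$ and fixes $\mu_{s_j}$, so $\Ni{1}$ increases by $1$. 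In particular $\ve_j(P(\mu_\bullet)) = \Ni{1}(\mu_\bullet)$, and $e_j$ symmetrically decreases $\Ni{1}$ by $1$ while fixing $\Ni{l}$ for $l \ge 2$.

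For the inclusion $\supseteq$ in \eqref{eq:demintp}, let $Q \in \mv_{s_j x}(\lambda)$ with witness $\ba' = (a'_1, \ldots, a'_{p'}) \in S(s_j x w_0, \bi)$, where $p' = p + 1$, and set $P := f_j^N Q$ for $N \ge 0$ with $P \ne \bzero$. If $a'_1 = 1$, then $x w_0 = s_{i_{a'_2}} \cdots s_{i_{a'_{p'}}}$ is reduced and $\bb := (a'_2, \ldots, a'_{p'}) \in S(x w_0, \bi)$; if $a'_1 \ge 2$, the Strong Exchange Condition applied to $s_j x w_0 = s_{i_{a'_1}} \cdots s_{i_{a'_{p'}}}$ (using $\ell(s_j \cdot s_j x w_0) < \ell(s_j x w_0)$) yields some index $r$ for which $\bb := (a'_1, \ldots, \widehat{a'_r}, \ldots, a'_{p'}) \in S(x w_0, \bi)$. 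In either case every entry of $\bb$ is $\ge 2$, so $\Ni{b_q}(P) = \Ni{b_q}(Q) = 0$ for all $q$, and $P \in \mv_x(\lambda)$. Conversely, given $P \in \mv_x(\lambda)$ with witness $\ba = (a_1, \ldots, a_p) \in S(x w_0, \bi)$, the hypothesis $s_j x < x$ gives $\ell(s_j x w_0) = p + 1$, so no reduced expression of $x w_0$ begins with $s_j$, forcing $a_1 \ge 2$. Setting $N := \ve_j(P) = \Ni{1}(P)$ and $Q := e_j^N P$ yields $\Ni{1}(Q) = 0$ and $\Ni{a_q}(Q) = 0$; since $s_j x w_0 = s_j \cdot s_{i_{a_1}} \cdots s_{i_{a_p}}$ is reduced, $(1, a_1, \ldots, a_p) \in S(s_j x w_0, \bi)$ certifies $Q \in \mv_{s_j x}(\lambda)$, and $P = f_j^N Q$ completes the argument. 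The proof of \eqref{eq:deminfp} is identical (with no need to check $f_j^N Q \ne \bzero$, since $f_j$ never produces $\bzero$ on $\mv(\infty)$). The main technical obstacle is the $\supseteq$ direction when $a'_1 \ge 2$: one cannot simply delete an index from $\ba'$ but must invoke the Strong Exchange Condition to locate an index whose removal still yields a reduced subexpression of $x w_0$ inside $\bi$.
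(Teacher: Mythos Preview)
Your proof is correct and follows essentially the same approach as the paper's: both fix a reduced word $\bi$ beginning with $j$, use Proposition~\ref{prop:every} (equivalently Remark~\ref{rem:every}) to obtain a witness for that particular $\bi$, exploit that $f_j$ and $e_j$ alter only $\Ni{1}$ along this word, and apply the exchange condition to pass between $S(xw_0,\bi)$ and $S(s_jxw_0,\bi)$. The only cosmetic difference is that you split the $\supseteq$ direction into the two cases $a'_1=1$ and $a'_1\ge 2$, whereas the paper applies the exchange condition uniformly and then observes that $a_1=1$ forces the deleted index to be the first one.
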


\begin{proof}
(1) It is obvious from the definitions of 
$\mv_{e}(\lambda)$ and $\mv_{e}(\infty)$ that 
$P_{\lambda} \in \mv_{e}(\lambda)$ and 
$P_{0} \in \mv_{e}(\infty)$. 
Now, let $P=P(\mu_{\bullet}) \in \mv_{e}(\lambda)$ 
(resp., $P=P(\mu_{\bullet}) \in \mv_{e}(\infty)$) be 
an MV polytope with GGMS datum 
$\mu_{\bullet}=(\mu_{w})_{w \in W}$.
Note that the set $S(ew_{0},\,\bi)=S(w_{0},\,\bi)$ 
consists of the single element $(1,\,2,\,\dots,\,m)$ 
for all $\bi=(i_{1},\,i_{2},\,\dots,\,i_{m}) \in R(w_{0})$. 
Therefore, we see from Proposition~\ref{prop:every} 
that $\Ni{l}(\mu_{\bullet})=0$ 
for all $\bi \in R(w_{0})$ and $1 \le l \le m$, 
which implies that
$\mu_{\wi{0}}=\mu_{\wi{1}}= \cdots = \mu_{\wi{m}} = 
 \mu_{w_{0}}=\lambda$ (resp., $=0$) 
for all $\bi \in R(w_{0})$. 
Here we recall the well-known fact 
(see, for example, \cite[Proposition~3.1.2]{BB}) that 
for each $w \in W$, there exist some $\bi \in R(w_{0})$ and 
an integer $0 \le l \le m$ such that $w=\wi{l}$.
From this fact, 
it follows that $\mu_{w}=\lambda$ 
(resp., $\mu_{w}=0$) for all $w \in W$, 
and hence $P=P_{\lambda}$ (resp., $P=P_{0}$). 
Thus we have shown part (1).

(2) We denote by $p$ the length $\ell(xw_{0})$ of $xw_{0}$ 
as in \S\ref{subsec:main}. Then the length 
$\ell(s_{j}xw_{0})$ of $s_{j}xw_{0}$ is equal to $p+1$, 
since $s_{j}x < x$ by assumption. We take and fix 
$\bi=(i_{1},\,i_{2},\,\dots,\,i_{m}) \in R(w_{0})$ 
such that $i_{1}=j$. 

First we show the inclusion $\supset$ of \eqref{eq:demintp} 
(resp., \eqref{eq:deminfp}).
Let $P=P(\mu_{\bullet}) \in \mv_{s_{j}x}(\lambda)$ 
(resp., $P=P(\mu_{\bullet}) \in \mv_{s_{j}x}(\infty)$) be 
an MV polytope with GGMS datum $\mu_{\bullet}=(\mu_{w})_{w \in W}$. 
Assume that $f_{j}^{N}P \ne \bzero$ for some $N \ge 0$, and 
let $\mu_{\bullet}'=(\mu_{w}')_{w \in W}$ denote
the GGMS (hence MV) datum of the MV polytope $f_{j}^{N}P$, i.e., 
$f_{j}^{N}P=P(\mu_{\bullet}')$. 
In order to show that 
$f_{j}^{N}P=P(\mu_{\bullet}') \in \mv_{x}(\lambda)$ 
(resp., $\in \mv_{x}(\infty)$), 
we define a sequence $\bb=(b_{1},\,b_{2},\,\dots,\,b_{p}) \in 
S(xw_{0},\,\bi)$ as follows. 
By Proposition~\ref{prop:every}, 
there exists a sequence 
$\ba=(a_{1},\,a_{2},\,\dots,\,a_{p+1}) \in 
S(s_{j}xw_{0},\,\bi)$ such that 
$\Ni{a_{q}}(\mu_{\bullet})=0$ 
for all $1 \le q \le p+1$; recall that 
$\si{a_{1}}\si{a_{2}} \cdots \si{a_{p+1}}$ is 
a reduced expression of $s_{j}xw_{0}$ 
by the definition of $S(s_{j}xw_{0},\,\bi)$. 
Since $s_{j}xw_{0} > xw_{0}$, 
it follows from the ``exchange condition''
(see, for example, \cite[Chap.\,5, \S3, Proposition~2]{MP})
that $xw_{0}$ has a reduced expression of the form: 
$xw_{0}=\si{a_{1}} \cdots \si{a_{q-1}}\si{a_{q+1}} \cdots \si{a_{p+1}}$
for some (uniquely determined) $1 \le q \le p+1$. 
Here we note that if $a_{1}=1$, 
then $q=1$ (i.e., $\si{a_{1}}$ is removed) 
since $i_{a_{1}}=i_{1}=j$ and $s_{j}xw_{0} > xw_{0}$. 
Thus, the sequence 
$(a_{1},\,\dots,\,a_{q-1},\,a_{q+1},\,\dots,\,a_{p+1})$ 
obtained from the sequence $\ba$ by removing $a_{q}$ is 
an element of $S(xw_{0},\,\bi)$. 
We now define $\bb=(b_{1},\,b_{2},\,\dots,\,b_{p})$ to be 
this sequence 
$(a_{1},\,\dots,\,a_{q-1},\,a_{q+1},\,\dots,\,a_{p+1}) 
\in S(xw_{0},\,\bi)$. It follows that 
$b_{q} \ge 2$ for all $1 \le q \le p$. 
%
%
\begin{claim} \label{dp-c1}
We have $\Ni{b_{q}}(\mu_{\bullet}')=0$ for all $1 \le q \le p$. 
\end{claim}

\noindent
{\it Proof of Claim~\ref{dp-c1}. }
Note that $s_{j}\wi{l} < \wi{l}$ for all $1 \le l \le m$, 
since $\bi=(i_{1},\,i_{2},\,\dots,\,i_{m}) \in R(w_{0})$ is 
such that $i_{1}=j$. Therefore, it follows from the definition of 
the Kashiwara operator $f_{j}$ that 
$\mu_{\wi{l}}'=\mu_{\wi{l}}$ for all $1 \le l \le m$. 
Hence we have $\Ni{l}(\mu_{\bullet}')=\Ni{l}(\mu_{\bullet})$ 
for all $2 \le l \le m$. 
Because $b_{q} \ge 2$ for all $1 \le q \le p$, and 
$\Ni{a_{q}}(\mu_{\bullet})=0$ for all $1 \le q \le p+1$, 
we deduce from the definition of the sequence $\bb$ that 
$\Ni{b_q}(\mu_{\bullet}')=0$ for all $1 \le q \le p$. 
This proves the claim.~\bqed

\vsp

It follows from Claim~\ref{dp-c1} that 
$f_{j}^{N}P=P(\mu_{\bullet}') \in \mv_{x}(\lambda)$ 
(resp., $\in \mv_{x}(\infty)$), and hence the inclusion 
$\supset$ of \eqref{eq:demintp} (resp., \eqref{eq:deminfp}) 
is verified. 

\vsp

The inclusion $\subset$ of \eqref{eq:demintp} 
(resp., \eqref{eq:deminfp}) can be shown similarly.
Let $P=P(\mu_{\bullet}) \in \mv_{x}(\lambda)$ 
(resp., $P=P(\mu_{\bullet}) \in \mv_{x}(\infty)$) be 
an MV polytope with GGMS datum $\mu_{\bullet}=(\mu_{w})_{w \in W}$. 
Let $\mu_{\bullet}'=(\mu_{w}')_{w \in W}$ denote 
the GGMS (hence MV) datum of the MV polytope 
$e_{j}^{\max}P:=e_{j}^{\ve_{j}(P)}P$, i.e., 
$e_{j}^{\max}P=P(\mu_{\bullet}')$. 
We define a sequence 
$\bb=(b_{1},\,b_{2},\,\dots,\,b_{p+1}) \in 
S(s_{j}xw_{0},\,\bi)$ as follows. 
By Proposition~\ref{prop:every}, there exists 
a sequence $\ba=(a_{1},\,a_{2},\,\dots,\,a_{p}) \in 
S(xw_{0},\,\bi)$ such that 
$\Ni{a_{q}}(\mu_{\bullet})=0$ 
for all $1 \le q \le p$.
Note that $a_{1} \ge 2$, since $s_{j}xw_{0} > xw_{0}$ and 
$\bi=(i_{1},\,i_{2},\,\dots,\,i_{m}) \in R(w_{0})$ is 
such that $i_{1}=j$. 
We now define $\bb=(b_{1},\,b_{2},\,\dots,\,b_{p+1})$ 
to be the sequence 
$\bb=(1,\,a_{1},\,a_{2},\,\dots,\,a_{p})$.
Then it follows immediately that 
$\bb \in S(s_{j}xw_{0},\,\bi)$ since $i_{1}=j$. 
%
%
\begin{claim} \label{dp-c2}
We have $\Ni{b_{q}}(\mu_{\bullet}')=0$ for all $1 \le q \le p+1$. 
\end{claim}

\noindent
{\it Proof of Claim~\ref{dp-c2}. }
Note that $e_{j}(e_{j}^{\max}P) = \bzero$ by the definition. 
Hence it follows from the definition of 
the Kashiwara operator $e_{j}$ that 
$\mu_{\wi{0}}'=\mu_{e}'=\mu_{s_{j}}'=\mu_{\wi{1}}'$. 
Also, it follows from the definition of $e_{j}$ that 
$\mu_{\wi{l}}'=\mu_{\wi{l}}$ for all $1 \le l \le m$, 
since $i_{1}=j$ implies $s_{j}\wi{l} < \wi{l}$.  
Therefore, we have 
$\Ni{b_1}(\mu_{\bullet}')=\Ni{1}(\mu_{\bullet}')=0$, and 
$\Ni{l}(\mu_{\bullet}')=\Ni{l}(\mu_{\bullet})$ 
for all $2 \le l \le m$. 
Because $a_{q} \ge a_{1} \ge 2$ for all $1 \le q \le p$, and 
$\Ni{a_{q}}(\mu_{\bullet})=0$ for all $1 \le q \le p$, 
we deduce that $\Ni{b_q}(\mu_{\bullet}')=
\Ni{a_{q-1}}(\mu_{\bullet}')=
\Ni{a_{q-1}}(\mu_{\bullet})=0$ for all $2 \le q \le p+1$. 
Hence we have 
$\Ni{b_q}(\mu_{\bullet}')=0$ for all $1 \le q \le p+1$. 
This proves the claim. \bqed

\vsp

It follows from Claim~\ref{dp-c2} that 
$e_{j}^{\max}P=P(\mu_{\bullet}') \in \mv_{s_{j}x}(\lambda)$, 
(resp., $\in \mv_{s_{j}x}(\infty)$). 
Therefore, we conclude that 
$P \in f_{j}^{\ve_{j}(P)}\mv_{s_jx}(\lambda)$ 
(resp., $\in f_{j}^{\ve_{j}(P)}\mv_{s_jx}(\infty)$), 
which implies the inclusion 
$\subset$ of \eqref{eq:demintp} 
(resp., \eqref{eq:deminfp}). 
Thus, we have shown \eqref{eq:demintp} and \eqref{eq:deminfp}, 
thereby completing the proof of the proposition.
\end{proof}

Part (1) of Theorem~\ref{thm:main} follows 
immediately by combining \eqref{eq:demint1}, 
\eqref{eq:demint2} and the corresponding assertions
for $\mv_{x}(\lambda)$, $x \in W$, in 
Proposition~\ref{prop:demply}; 
also, recall Remark~\ref{rem:int}\,(1). 
Similarly, part (2) of Theorem~\ref{thm:main} follows 
immediately by combining \eqref{eq:deminf1}, 
\eqref{eq:deminf2} and the corresponding assertions
for $\mv_{x}(\infty)$, $x \in W$, in 
Proposition~\ref{prop:demply}; also, recall 
Remark~\ref{rem:inf}. 

%
\subsection{Opposite Demazure crystals.}
\label{subsec:opdem}

Let $x \in W$. 
The opposite Demazure module $V^{x}(\lambda)$ is defined to be 
the $U_{q}^{-}(\Fg^{\vee})$-submodule of $V(\lambda)$ 
generated by the one-dimensional weight space 
$V(\lambda)_{x \cdot \lambda}  \subset V(\lambda)$ of 
weight $x \cdot \lambda \in X_{\ast}(T) \subset \Fh_{\BR}$. 
Recall from \cite{Kas} that 
the opposite Demazure crystal $\CB^{x}(\lambda)$ is 
a subset of $\CB(\lambda)$ such that 
%
%
\begin{equation} \label{eq:opdem-mod}
V(\lambda) \supset 
V^{x}(\lambda) = 
 \bigoplus_{b \in \CB^{x}(\lambda)}
 \BC(q) G_{\lambda}(b),
\end{equation}
where $G_{\lambda}(b)$, $b \in \CB(\lambda)$, 
form the lower global basis of $V(\lambda)$.

%
\begin{rem} \label{rem:opdemcos}
If $x,\,y \in W$ satisfies 
$x \cdot \lambda=y \cdot \lambda$, then 
we have $V^{x}(\lambda)=V^{y}(\lambda)$ since 
$V(\lambda)_{x \cdot \lambda}=
 V(\lambda)_{y \cdot \lambda}$. 
Therefore, it follows from \eqref{eq:opdem-mod} that 
$\CB^{x}(\lambda)=\CB^{y}(\lambda)$. 
\end{rem}

We know from \cite[\S4]{Kas} that 
the opposite Demazure crystals $\CB^{x}(\lambda)$, 
$x \in W$, are characterized by 
the (descending) inductive relations: 
%
%
\begin{align}
& \CB^{w_{0}}(\lambda)=
   \bigl\{u_{w_{0} \cdot \lambda}\bigr\}, 
   \label{eq:opdemint1} \\[1.5mm]
& \CB^{x}(\lambda)=
    \bigcup_{N \ge 0} e_{j}^{N} 
     \CB^{s_{j}x}(\lambda) 
     \setminus \{\bzero\}
  \quad \text{for $x \in W$ and $j \in I$ with $s_{j}x > x$}. 
  \label{eq:opdemint2}
\end{align}
Furthermore, 
we see from \cite[Proposition~4.2\,(i)]{Kas} and 
\eqref{eq:opdemint2} that 
for $x \in W$ and $j \in I$ with $s_{j}x > x$, 
$b \in \CB^{x}(\lambda)$ holds if and only if 
$f_{j}^{\max}b \in \CB^{s_{j}x}(\lambda)$ holds, 
where for $b \in \CB(\lambda)$ and $j \in I$, we set 
$f_{j}^{\max}b:=f_{j}^{\vp_{j}(b)}b$, with 
$\vp_{j}(b):=\max \bigl\{N \ge 0 \mid f_{j}^{N}b \ne \bzero\bigr\}$. 
Using this fact successively, we obtain the following lemma 
(cf. \cite[Proposition~9.1.3\,(2)]{Kasb} 
for a similar result for Demazure crystals).
%
%
\begin{lem} \label{lem:fmax}
Let $x \in W$, and let $i_{1},\,i_{2},\,\dots,\,i_{p}$ 
be a sequence of elements in $I$ such that 
$\ell(\si{p} \cdots \si{2}\si{1}x)=\ell(x)+p$ and 
$\si{p} \cdots \si{2}\si{1}x=w_{0}$. An element $b \in \CB(\lambda)$ 
is contained in the opposite Demazure crystal $\CB^{x}(\lambda)$ 
associated to $x$ if and only if 
$f_{i_p}^{\max} \cdots 
 f_{i_2}^{\max}
 f_{i_1}^{\max}b=u_{w_{0} \cdot \lambda}$. 
\end{lem}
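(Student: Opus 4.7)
The plan is to induct on $p$. The base case $p = 0$ forces $x = w_{0}$; the empty composition of operators is the identity, so the claim reduces to \eqref{eq:opdemint1}, which says $\CB^{w_{0}}(\lambda) = \{u_{w_{0} \cdot \lambda}\}$, and both sides of the asserted equivalence become $b = u_{w_{0} \cdot \lambda}$.

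For the inductive step, I first observe that the hypothesis $\ell(\si{p} \cdots \si{1} x) = \ell(x) + p$ forces every intermediate multiplication to increase length by exactly one, since each $\si{k}$ changes length by $\pm 1$ and the total change is $+p$. In particular $\si{1} x > x$, and the truncated sequence $i_{2}, \dots, i_{p}$ satisfies the analogous length condition $\ell(\si{p} \cdots \si{2} (\si{1} x)) = \ell(\si{1} x) + (p-1)$ together with $\si{p} \cdots \si{2} (\si{1} x) = w_{0}$. Hence the inductive hypothesis applies to the element $\si{1} x \in W$ together with the shorter sequence $i_{2}, \dots, i_{p}$.

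The key input is the fact recalled just before the lemma (from \cite[Proposition~4.2\,(i)]{Kas} combined with \eqref{eq:opdemint2}): for $x \in W$ and $j \in I$ with $s_{j} x > x$, an element $b \in \CB(\lambda)$ lies in $\CB^{x}(\lambda)$ if and only if $f_{j}^{\max} b \in \CB^{s_{j} x}(\lambda)$. Applied with $j = i_{1}$, this gives $b \in \CB^{x}(\lambda) \iff f_{i_{1}}^{\max} b \in \CB^{\si{1} x}(\lambda)$, and by the inductive hypothesis the latter is in turn equivalent to $f_{i_{p}}^{\max} \cdots f_{i_{2}}^{\max}(f_{i_{1}}^{\max} b) = u_{w_{0} \cdot \lambda}$. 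This closes the induction.

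I do not foresee any real obstacle: the only genuine check is the propagation of the length hypothesis to the truncated sequence, which is immediate, and everything else is a direct application of the recalled consequence of \cite[Proposition~4.2\,(i)]{Kas}.
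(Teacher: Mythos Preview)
Your proof is correct and is precisely the argument the paper indicates: the paper does not give a detailed proof but simply says ``Using this fact successively, we obtain the following lemma,'' and your induction on $p$ is the natural formalization of that successive application. The only point to verify---that the length hypothesis forces $\si{1}x > x$ and propagates to the truncated sequence---is handled correctly.
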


%
\subsection{Condition for an MV polytope to lie in an opposite Demazure crystal.}
\label{subsec:op-main}

Let us fix an arbitrary $x \in W$, and denote by $p$ 
the length $\ell(xw_{0})$ of $xw_{0} \in W$. 
Let $\mv^{x}(\lambda)$ denote the subset of $\mv(\lambda)$ 
consisting of those elements $P(\mu_{\bullet}) \in \mv(\lambda)$ 
with GGMS datum $\mu_{\bullet}=(\mu_{w})_{w \in W}$ 
which satisfy the condition: 

\vsp

(Op. Dem.) 
for some $\bi=(i_{1},\,i_{2},\,\dots,\,i_{m}) \in R(w_{0})$ 
such that
$\wi{p}=\si{1}\si{2} \cdots \si{p}=xw_{0}$, there holds 
$\mu_{\wi{l}}=\wi{l}w_{0} \cdot \lambda$ 
for all $p \le l \le m$. 

\vsp

The following is the second main result of this paper. 
%
%
\begin{thm} \label{thm:opdem}
Under the isomorphism 
$\Psi_{\lambda}:\CB(\lambda) 
 \stackrel{\sim}{\rightarrow} 
 \mv(\lambda)$ of Theorem~\ref{thm:kam-int}, 
the opposite Demazure crystal 
$\CB^{x}(\lambda) \subset \CB(\lambda)$ 
associated to $x \in W$ is mapped to 
$\mv^{x}(\lambda)$, that is, 
\begin{equation*}
\Psi_{\lambda}(\CB^{x}(\lambda))=\mv^{x}(\lambda).
\end{equation*}
\end{thm}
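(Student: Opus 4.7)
The plan is to mirror the strategy of the proof of Theorem~\ref{thm:main}(1) given in \S\ref{subsec:prf-main}: establish analogues of Propositions~\ref{prop:every} and \ref{prop:demply}, then invoke the inductive characterization \eqref{eq:opdemint1}--\eqref{eq:opdemint2} of $\CB^{x}(\lambda)$ and conclude by induction on $\ell(x)$.

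First, I would prove an analogue of Proposition~\ref{prop:every}: if $P(\mu_{\bullet}) \in \mv^{x}(\lambda)$, then for \emph{every} $\bi \in R(w_{0})$ with $\wi{p}=xw_{0}$, the equality $\mu_{\wi{l}}=\wi{l} w_{0} \cdot \lambda$ holds for all $p \le l \le m$. By Remark~\ref{rem:braid}, it suffices to treat the case where $\bi$ and $\bj$ (both with $\wi{p}=\wj{p}=xw_{0}$) differ by a single $2$-move or $3$-move. Such a move must lie entirely in positions $\le p$ or entirely in positions $> p$, since otherwise $\wi{p}$ would change; a move in the first range leaves every $\wi{l}$ with $l \ge p$ untouched, while a move in the second range is handled by a case analysis parallel to the Claim in the proof of Proposition~\ref{prop:every}, using the MV braid relations in Definition~\ref{dfn:MV} and \eqref{eq:3bm} together with the crucial fact that $P_{w_{0} \cdot \lambda}$ is itself an MV polytope (Remark~\ref{rem:int}(2)) whose GGMS datum $\mu_{w}=w w_{0} \cdot \lambda$ obeys the same relations. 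Agreement of vertices at the boundary of the braid region then forces agreement at the interior.

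Next, I would establish the analogue of Proposition~\ref{prop:demply}, namely
\begin{equation*}
\mv^{w_{0}}(\lambda)=\bigl\{P_{w_{0} \cdot \lambda}\bigr\},\qquad
\mv^{x}(\lambda)=\bigcup_{N \ge 0} e_{j}^{N} \mv^{s_{j}x}(\lambda) \setminus \{\bzero\} \quad \text{when $s_{j}x > x$}.
\end{equation*}
The first equality follows from the first ingredient: in the case $p=0$ every $w \in W$ appears as $\wi{l}$ for some $\bi \in R(w_{0})$ and some $l$ (cf.\ \cite[Proposition~3.1.2]{BB}), so $\mu_{w}=ww_{0} \cdot \lambda$ for all $w$. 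For the inductive identity I choose $\bi=(i_{1},\dots,i_{m}) \in R(w_{0})$ with $i_{1}=j$ and $\wi{p}=xw_{0}$, which is available because $\ell(s_{j}xw_{0})=p-1$ forces $xw_{0}$ to admit a reduced expression beginning with $s_{j}$; then $s_{j}\wi{l}<\wi{l}$ for all $l \ge 1$, so $e_{j}$ and $f_{j}^{\max}$ fix every $\mu_{\wi{l}}$ with $l \ge 1$. To bridge the conditions for $\mv^{x}(\lambda)$ and $\mv^{s_{j}x}(\lambda)$, which require $\wi{p}=xw_{0}$ and $\wj{p-1}=s_{j}xw_{0}$ for their respective reduced words, I invoke the first ingredient to decouple the choice of $\bi$ on one side from the choice of $\bj$ on the other, and then match the vertex sets $\{\wi{l}\}_{l \ge p}$ and $\{\wj{l}\}_{l \ge p-1}$ via their common members.

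With these two ingredients in hand, the theorem follows by descending induction on $\ell(x)$ (equivalently, ascending induction on $p=\ell(xw_{0})$), combining \eqref{eq:opdemint1}--\eqref{eq:opdemint2} with the fact that $\Psi_{\lambda}$ intertwines the Kashiwara operators, exactly as in the closing paragraph of \S\ref{subsec:prf-main}. I expect the principal obstacle to be the reduced-word matching in the inductive identity above: in contrast with the Demazure case treated in Theorem~\ref{thm:main}(1), where a single $\bi$ with $i_{1}=j$ could be adapted simultaneously to both $x$ and $s_{j}x$, here the two conditions $\wi{p}=xw_{0}$ and $\wi{p-1}=s_{j}xw_{0}$ cannot generally be realized by the same $\bi$, because the required $s_{i_{p}}=w_{0}x^{-1}s_{j}xw_{0}$ is typically a non-simple reflection. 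The ``for-every-$\bi$'' strengthening provided by the first ingredient is precisely what permits the decoupling of these two reduced-word choices and their subsequent reconciliation.
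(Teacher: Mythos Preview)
Your strategy is genuinely different from the paper's, and the obstacle you flag at the end is a real gap rather than a technicality.

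The paper does not argue by induction via \eqref{eq:opdemint1}--\eqref{eq:opdemint2}. It uses Lemma~\ref{lem:fmax} directly: $P\in\Psi_\lambda(\CB^x(\lambda))$ iff $f_{i_p}^{\max}\cdots f_{i_1}^{\max}P=P_{w_0\cdot\lambda}$ for some (equivalently any) reduced word $(i_1,\dots,i_p)$ of $xw_0$. This criterion is linked to the GGMS datum through Kamnitzer's formula \cite[Theorem~6.6]{Kam2},
\[
N_l:=\vp_{i_l}\bigl(f_{i_{l-1}}^{\max}\cdots f_{i_1}^{\max}P\bigr)=\tfrac12\,\pair{\wi{l-1}\cdot\alpha_{i_l}}{\mu_{\wi{l-1}}+\mu_{\wi{l}}}.
\]
For $\Psi_\lambda(\CB^x(\lambda))\subset\mv^x(\lambda)$ one deduces $N_l=0$ for $l>p$ from Lemma~\ref{lem:fmax}, and a short descending induction using this formula together with \eqref{eq:n} yields $\mu_{\wi{l}}=\wi{l}w_0\cdot\lambda$. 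For the reverse inclusion one checks directly that $\mu_{\wi{l-1}}=\wi{l-1}w_0\cdot\lambda$ and $\mu_{\wi{l}}=\wi{l}w_0\cdot\lambda$ force $N_l=0$, and concludes again by Lemma~\ref{lem:fmax}. No braid-move bookkeeping is required, and the ``for every $\bi$'' statement drops out as a byproduct.

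Your second ingredient, by contrast, cannot be completed by vertex-matching alone. Even after the ``for every $\bi$'' strengthening, the set of vertices controlled by the $s_jx$ condition need not contain those demanded by the $x$ condition. Ranging over all admissible reduced words, $\{\wi{l}:l\ge p,\ \wi{p}=xw_0\}$ is exactly the set of $w$ having $xw_0$ as a left factor (i.e.\ $\ell(w)=\ell(xw_0)+\ell((xw_0)^{-1}w)$), and similarly for $s_jxw_0$. But $s_jxw_0$ is \emph{not} in general a left factor of $xw_0$: the element $(s_jxw_0)^{-1}xw_0=w_0x^{-1}s_jxw_0$ is a reflection that is typically not simple. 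In type $A_2$ with $x=s_1$ and $j=2$ one has $xw_0=s_2s_1$, $s_jxw_0=s_1$, and the two vertex sets are $\{s_2s_1,\,w_0\}$ and $\{s_1,\,s_1s_2,\,w_0\}$; the vertex $\mu_{s_2s_1}$ required for membership in $\mv^{s_1}(\lambda)$ is nowhere among those constrained by the $\mv^{s_2s_1}(\lambda)$ condition. (In particular, already the $N=0$ case of your inductive identity, namely $\mv^{s_jx}(\lambda)\subset\mv^x(\lambda)$, is not a formal consequence of the definitions.) Pinning down $\mu_{s_2s_1}$ would require combining \eqref{eq:3bm} with the global constraint $P\subset\Conv(W\cdot\lambda)$; that is precisely what \cite[Theorem~6.6]{Kam2} packages, and it is the ingredient your outline is missing.
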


\begin{proof}
First, we prove that 
$\Psi_{\lambda}(\CB^{x}(\lambda)) \subset \mv^{x}(\lambda)$.
Let $P=P(\mu_{\bullet}) \in \Psi_{\lambda}(\CB^{x}(\lambda))$ 
be an MV polytope with GGMS datum 
$\mu_{\bullet}=(\mu_{w})_{w \in W}$, and 
take an arbitrary 
$\bi=(i_{1},\,i_{2},\,\dots,\,i_{m}) \in R(w_{0})$ 
such that $\wi{p}=\si{1}\si{2} \cdots \si{p}=xw_{0}$.
We will show by descending induction on $l$ that 
$\mu_{\wi{l}}=\wi{l}w_{0} \cdot \lambda$ for every $p \le l \le m$. 
Since $\mu_{\wi{m}}=\mu_{w_{0}}=\lambda=w_{0}w_{0} \cdot \lambda=
\wi{m}w_{0} \cdot \lambda$, the assertion holds when $l=m$. 
For $p+1 \le l \le m$, 
we have
\begin{align*}
\mu_{\wi{l-1}}
 & =\mu_{\wi{l}}-\Ni{l}\wi{l-1} \cdot h_{i_l}
   \quad \text{by the length formula} \\
 & =\wi{l}w_{0} \cdot \lambda - \Ni{l}\wi{l-1} \cdot h_{i_l}
   \quad \text{by the induction hypothesis} \\
 & = \wi{l-1}\si{l} w_{0} \cdot \lambda - \Ni{l} \wi{l-1} \cdot h_{i_l} \\
 & = \wi{l-1} \cdot 
    \Bigl(
      w_{0} \cdot \lambda - 
      \pair{\alpha_{i_l}}{w_{0} \cdot \lambda}h_{i_l}
    \Bigr) - \Ni{l} \wi{l-1} \cdot h_{i_l} \\
 & = \wi{l-1}w_{0} \cdot \lambda -
    \bigl\{\pair{\alpha_{i_l}}{w_{0} \cdot \lambda}+\Ni{l}\bigr\}
    \wi{l-1} \cdot h_{i_l}. 
\end{align*}
Therefore, in order to show that 
$\mu_{\wi{l-1}}=\wi{l-1}w_{0} \cdot \lambda$, 
it suffices to prove the following claim. 
%
%
\begin{claim} \label{c:opdem2}
We have $\Ni{l}=-\pair{\alpha_{i_l}}{w_{0} \cdot \lambda}$. 
\end{claim}

\noindent 
{\it Proof of Claim~\ref{c:opdem2}.}
As in \cite[\S6.3]{Kam2}, we set
%
%
\begin{equation} \label{eq:opdem01}
\begin{array}{l}
N_{1}:=
 \vp_{i_1}(P), \quad
N_{2}:=
 \vp_{i_2}(f_{i_{1}}^{\max}P), \quad
N_{3}:=
 \vp_{i_3}(f_{i_{2}}^{\max}f_{i_{1}}^{\max}P), \quad \dots\dots \\[3mm]
\hspace*{15mm}
\dots\dots, \quad 
N_{m}:=
 \vp_{i_m}(f_{i_{m-1}}^{\max} \cdots f_{i_{2}}^{\max}f_{i_{1}}^{\max}P),
\end{array}
\end{equation}
where for $P \in \mv(\lambda)$ and $j \in I$, we set
$f_{j}^{\max}P:=f_{j}^{\vp_{j}(P)}P \in \mv(\lambda)$. 
Then we have $N_{p+1}=N_{p+2}=\cdots=N_{m}=0$. Indeed, 
it follows from the equality 
$\si{1}\si{2} \cdots \si{p}=xw_{0}$ that 
$\si{p} \cdots \si{2}\si{1}x=(xw_{0})^{-1}x=w_{0}$, 
and hence
\begin{equation*}
\ell(\si{p} \cdots \si{2}\si{1}x)=
\ell(w_{0})=m=(m-p)+p=\ell(x)+p, 
\end{equation*}
since $\ell(x)=\ell(w_{0})-\ell(xw_{0})=m-p$. 
Therefore, from Lemma~\ref{lem:fmax}, 
we deduce that 
\begin{equation*}
f_{i_{p}}^{\max} \cdots f_{i_{2}}^{\max}f_{i_{1}}^{\max}P = 
 P_{w_{0} \cdot \lambda}.
\end{equation*}
Because $P_{w_{0} \cdot \lambda} \in \mv(\lambda)$ is 
the lowest weight element of weight $w_{0} \cdot \lambda$ 
(recall Remark~\ref{rem:int}\,(2)), we have 
$\vp_{j}(P_{w_{0} \cdot \lambda})=0$ for all $j \in I$, 
and hence $N_{p+1}=N_{p+2}=\cdots=N_{m}=0$, as desired.
In particular, $N_{l}=0$ since $p+1 \le l \le m$. 
Also, we know from \cite[Theorem~6.6]{Kam2} 
(and the equation preceding it) that 
\begin{equation*}
N_{l}=\frac{1}{2}
 \pair{ \wi{l-1} \cdot \alpha_{i_l} }
      { \mu_{\wi{l-1}}+\mu_{\wi{l}} }. 
\end{equation*}
Consequently, we obtain 
%
%
\begin{equation} \label{eq:opdem05}
 \pair{ \wi{l-1} \cdot \alpha_{i_l} }
      { \mu_{\wi{l-1}} }
=
-\pair{ \wi{l-1} \cdot \alpha_{i_l} }
      { \mu_{\wi{l}} }.
\end{equation}
Here we note that by the length formula 
$\mu_{\wi{l}}-\mu_{\wi{l-1}}=\Ni{l}\wi{l-1} \cdot h_{i_l}$, 
there holds 
%
%
\begin{equation} \label{eq:opdem1}
\pair{\wi{l-1} \cdot \alpha_{i_l}}{\mu_{\wi{l}}-\mu_{\wi{l-1}}}=
\Ni{l} \pair{\wi{l-1} \cdot \alpha_{i_l}}{\wi{l-1} \cdot h_{i_l}}.
\end{equation}
The left-hand side of \eqref{eq:opdem1} is equal to: 
\begin{align*}
\pair{\wi{l-1} \cdot \alpha_{i_l}}{\mu_{\wi{l}}-\mu_{\wi{l-1}}} & =
\pair{\wi{l-1} \cdot \alpha_{i_l}}{\mu_{\wi{l}}} - 
\pair{\wi{l-1} \cdot \alpha_{i_l}}{\mu_{\wi{l-1}}} \\
& = 2 \pair{\wi{l-1} \cdot \alpha_{i_l}}{\mu_{\wi{l}}}
\quad \text{by \eqref{eq:opdem05}} \\
& = 2 \pair{\wi{l-1} \cdot \alpha_{i_l}}{\wi{l}w_{0} \cdot \lambda}
\quad \text{by the induction hypothesis} \\
& = 2 \pair{\wi{l-1} \cdot \alpha_{i_l}}{\wi{l-1}\si{l}w_{0} \cdot \lambda} \\
& = 2 \pair{\alpha_{i_l}}{\si{l}w_{0} \cdot \lambda} \\
& = -2 \pair{\alpha_{i_l}}{w_{0} \cdot \lambda}. 
\end{align*}
The right-hand side of \eqref{eq:opdem1} is equal to:
\begin{equation*}
\Ni{l} \pair{\wi{l-1} \cdot h_{i_l}}{\wi{l-1} \cdot \alpha_{i_l}}=
\Ni{l} \pair{h_{i_l}}{\alpha_{i_l}}=2\Ni{l}.
\end{equation*}
Substituting these equalities into \eqref{eq:opdem1}, we conclude 
that $\Ni{l}=-\pair{\alpha_{i_l}}{w_{0} \cdot \lambda}$. 
This proves the claim. \bqed

\vsp

Thus, we have shown that 
$\mu_{\wi{l}}=\wi{l}w_{0} \cdot \lambda$ for all $p \le l \le m$. 
This implies that $P \in \mv^{x}(\lambda)$, and hence that 
$\Psi_{\lambda}(\CB^{x}(\lambda)) \subset \mv^{x}(\lambda)$. 

Next, we prove the reverse inclusion 
$\Psi_{\lambda}(\CB^{x}(\lambda)) \supset \mv^{x}(\lambda)$.
Let $P=P(\mu_{\bullet}) \in \mv^{x}(\lambda)$ be an MV polytope 
with GGMS datum $\mu_{\bullet}=(\mu_{w})_{w \in W}$. 
Since $P \in \mv^{x}(\lambda)$, there exists 
$\bi=(i_{1},\,i_{2},\,\dots,\,i_{m}) \in R(w_{0})$ 
with $\wi{p}=\si{1}\si{2} \cdots \si{p}=xw_{0}$ such that 
$\mu_{\wi{l}}=\wi{l}w_{0} \cdot \lambda$ 
for all $p \le l \le m$. 
Define nonnegative integers 
$N_{l}$, $1 \le l \le m$, as in \eqref{eq:opdem01}:
\begin{equation*}
N_{l}:=\vp_{i_l}
 (f_{i_{l-1}}^{\max} \cdots f_{i_{2}}^{\max}f_{i_{1}}^{\max}P). 
\end{equation*}
%
%
\begin{claim} \label{c:opdem3}
We have $N_{l}=0$ for all $p+1 \le l \le m$. 
\end{claim}

\noindent 
{\it Proof of Claim~\ref{c:opdem3}.}
Let $p+1 \le l \le m$. We know from 
\cite[Theorem~6.6]{Kam2} 
(and the equation preceding it) that 
\begin{equation*}
N_{l}=\frac{1}{2}
 \pair{ \wi{l-1} \cdot \alpha_{i_l} }
      { \mu_{\wi{l-1}}+\mu_{\wi{l}} }. 
\end{equation*}
Using the equalities 
$\mu_{\wi{l-1}}=\wi{l-1}w_{0} \cdot \lambda$ and 
$\mu_{\wi{l}}=\wi{l}w_{0} \cdot \lambda$, 
we calculate as follows: 
\begin{align*}
\pair{ \wi{l-1} \cdot \alpha_{i_l} }
     { \mu_{\wi{l-1}}+\mu_{\wi{l}} } & = 
\pair{ \wi{l-1} \cdot \alpha_{i_l} }
     { \wi{l-1}w_{0} \cdot \lambda } +
\pair{ \wi{l-1} \cdot \alpha_{i_l} }
     { \wi{l}w_{0} \cdot \lambda } \\
& =
\pair{ \wi{l-1} \cdot \alpha_{i_l} }
     { \wi{l-1}w_{0} \cdot \lambda } +
\pair{ \wi{l-1} \cdot \alpha_{i_l} }
     { \wi{l-1}\si{l}w_{0} \cdot \lambda } \\
& =
\pair{ \alpha_{i_l} }
     { w_{0} \cdot \lambda } +
\pair{ \alpha_{i_l} }
     { \si{l}w_{0} \cdot \lambda } \\
& = 
\pair{ \alpha_{i_l} }{ w_{0} \cdot \lambda } -
\pair{ \alpha_{i_l} }{ w_{0} \cdot \lambda } =0.
\end{align*}
Therefore, we obtain $N_{l}=0$, as desired. \bqed

\vsp

We know (see, for example, \cite[Proposition~6.5]{Kam2}) 
that
\begin{equation*}
f_{i_{m}}^{\max} \cdots f_{i_{2}}^{\max}f_{i_{1}}^{\max}P = 
f_{i_{m}}^{N_{m}} \cdots f_{i_{2}}^{N_{2}}f_{i_{1}}^{N_{1}}P=
P_{w_{0} \cdot \lambda}.
\end{equation*}
Here, by Claim~\ref{c:opdem3}, we have
\begin{equation*}
f_{i_{m}}^{N_{m}} \cdots f_{i_{2}}^{N_{2}}f_{i_{1}}^{N_{1}}P=
f_{i_{p}}^{N_{p}} \cdots f_{i_{2}}^{N_{2}}f_{i_{1}}^{N_{1}}P=
f_{i_{p}}^{\max} \cdots f_{i_{2}}^{\max}f_{i_{1}}^{\max}P.
\end{equation*}
Hence we obtain
$f_{i_{p}}^{\max} \cdots f_{i_{2}}^{\max}f_{i_{1}}^{\max}P=
P_{w_{0} \cdot \lambda}$.
Since $\si{1}\si{2} \cdots \si{p}=xw_{0}$, we deduce that 
$\si{p} \cdots \si{2}\si{1}x=w_{0}$, and hence 
$\ell(\si{p} \cdots \si{2}\si{1}x)=\ell(x)+p$, as in 
the proof of Claim~\ref{c:opdem2}. 
Therefore, from Lemma~\ref{lem:fmax}, we conclude that 
$P \in \Psi_{\lambda}(\CB^{x}(\lambda))$. 
Thus, we have shown that 
$\Psi_{\lambda}(\CB^{x}(\lambda)) \supset \mv^{x}(\lambda)$, 
and hence that 
$\Psi_{\lambda}(\CB^{x}(\lambda))=\mv^{x}(\lambda)$. 
This completes the proof of the theorem. 
\end{proof}

\begin{rem}
We see from the proof of Theorem~\ref{thm:opdem} that
we can replace the phrase 
``for some $\bi=(i_{1},\,i_{2},\,\dots,\,i_{m}) \in R(w_{0})$''
in the condition (Op. Dem.) for $\mv^{x}(\lambda)$
with the phrase 
``for every $\bi=(i_{1},\,i_{2},\,\dots,\,i_{m}) \in R(w_{0})$''. 
\end{rem}

%
\section{
  Extremal MV polytopes and its relation 
  with opposite Demazure crystals.}
\label{sec:extremal}

We fix (once and for all) an arbitrary dominant coweight 
$\lambda \in X_{\ast}(T) \subset \Fh_{\BR}$. 
For each $x \in W$, we denote by 
$P_{x \cdot \lambda}$ the image of the extremal 
element $u_{x \cdot \lambda} \in \CB(\lambda)$ of 
weight $x \cdot \lambda \in X_{\ast}(T) \subset \Fh_{\BR}$ 
under the isomorphism 
$\Psi_{\lambda}:\CB(\lambda) 
 \stackrel{\sim}{\rightarrow} 
 \mv(\lambda)$
of Theorem~\ref{thm:kam-int}; 
we call $P_{x \cdot \lambda} \in \mv(\lambda)$ 
the extremal MV polytope
of weight $x \cdot \lambda$. 
Recall (from \cite[Lemma~4.2]{N}, for example) that 
for every reduced expression 
$x=\si{1}\si{2} \cdots \si{l}$ of $x$, 
we have $u_{x \cdot \lambda}=
 f_{i_1}^{\max}f_{i_2}^{\max} \cdots f_{i_l}^{\max} u_{\lambda}$.
The aim of this section is to give an explicit description 
as a pseudo-Weyl polytope for 
the extremal MV polytopes $P_{x \cdot \lambda}$, $x \in W$, 
and to give a polytopal condition for an MV polytope 
$P \in \mv(\lambda)$ to lie in the opposite Demazure crystal 
$\mv^{x}(\lambda)$ for $x \in W$.

%
\subsection{Explicit description of extremal MV polytopes.}
\label{subsec:extremal}

We fix $x \in W$, and denote by $p$ 
the length $\ell(xw_{0})$ of $xw_{0}$. 
Let $\bi=(i_{1},\,i_{2},\,\dots,\,i_{m})$ 
be an arbitrary element of $R(w_{0})$, with $m=\ell(w_{0})$. 
For a sequence 
$\ba=(a_{1},\,a_{2},\,\dots,\,a_{p}) \in S(xw_{0},\,\bi)$ and 
$0 \le l_{1},\,l_{2} \le m$, let $[l_{1},\,l_{2}] \cap \ba$ 
denote the subsequence of $\ba$ consisting of 
those $a_{q}$'s such that $a_{q} \in [l_{1},\,l_{2}]$.
Also, recall that the lexicographic ordering $\succeq$ on 
$S(xw_{0},\,\bi)$ is defined as follows: 
$(a_{1},\,a_{2},\,\dots,\,a_{p}) \succ
 (b_{1},\,b_{2},\,\dots,\,b_{p})$ 
if there exists some integer $1 \le q_0 \le p$ such that 
$a_{q}=b_{q}$ for all $1 \le q \le q_0-1$ 
and $a_{q_0} > b_{q_0}$; 
we denote by $\min S(xw_{0},\,\bi)$ the 
minimum element of $S(xw_{0},\,\bi)$ 
with respect to the lexicographic ordering.

We define a sequence 
$\xii{0},\,\xii{1},\,\dots,\,\xii{m}$ of elements 
of the $W$-orbit $W \cdot \lambda \ (\subset \Fh_{\BR})$ 
inductively by the following formula: 
%
%
\begin{equation} \label{eq:xii}
\xii{m}=\lambda, \qquad 
\xii{l-1}=
 \begin{cases}
 \xii{l} & \text{if $l$ appears in $\min S(xw_{0},\,\bi)$}, \\[1.5mm]
 s_{\bti{l}} \cdot \xii{l} & \text{otherwise}
 \end{cases}
\end{equation}
for $1 \le l \le m$, 
where we set $\bti{l}:=\wi{l-1} \cdot \alpha_{i_{l}}$ 
for $1 \le l \le m$, and denote 
by $s_{\beta} \in W$ the reflection 
with respect to a root $\beta$. 

\begin{rem}
It is well-known (see, for example, 
\cite[Chap.\,5, \S2, Lemma~2 and Proposition~3]{MP}) that 
$\bti{l}$, $1 \le l \le m$, exhaust all the positive roots. 
\end{rem}
%
%
\begin{ex} \label{ex:1sk}
We know (see, for example, \cite[Proposition~3.1.2]{BB}) that 
there exists $\bi \in R(w_{0})$ such that $\wi{p}=xw_{0}$. 
It is obvious that for this $\bi \in R(w_{0})$, 
$\min S(xw_{0},\,\bi)=(1,\,2,\,\dots,\,p)$. 
Hence it follows from the definition that 
$\xii{0}=\xii{1}= \cdots = \xii{p}$ and 
$\xii{l}=s_{\bti{l+1}}s_{\bti{l+2}} \cdots s_{\bti{m}} \cdot \lambda$ 
for all $p \le l \le m$. Moreover, we see 
from \eqref{eq:xi} below that 
$\xii{l}=\wi{l}w_{0} \cdot \lambda$ for all $p \le l \le m$. 
\end{ex}
%
%
\begin{lem} \label{lem:length}
We have $\xii{l}-\xii{l-1} \in \BZ_{\ge 0}(\wi{l-1} \cdot h_{i_{l}})$ 
for every $1 \le l \le m$. 
\end{lem}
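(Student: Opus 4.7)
The plan is to distinguish whether $l$ occurs in $\min S(xw_{0}, \bi) = (c_{1}, \ldots, c_{p})$ or not. If $l = c_{s}$ for some $s$, then \eqref{eq:xii} gives $\xii{l-1} = \xii{l}$, so the difference is $0$ and the claim is immediate. Assume therefore that $l \notin \{c_{1}, \ldots, c_{p}\}$, and let $r$ be the unique index with $c_{r} < l < c_{r+1}$ (with the natural conventions if $l < c_{1}$ or $l > c_{p}$). Then $\xii{l-1} = s_{\bti{l}} \cdot \xii{l}$, and the reflection formula, together with the fact that the coroot of $\bti{l} = \wi{l-1} \cdot \alpha_{i_{l}}$ is $\wi{l-1} \cdot h_{i_{l}}$, yields
\begin{equation*}
\xii{l} - \xii{l-1} = \pair{\bti{l}}{\xii{l}} \cdot (\wi{l-1} \cdot h_{i_{l}}).
\end{equation*}
The coefficient is automatically an integer because $\xii{l} \in W \cdot \lambda \subset X_{\ast}(T)$ pairs integrally with every root, so the real content of the lemma is the inequality $\pair{\bti{l}}{\xii{l}} \ge 0$.

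Next I would unwind the recursive definition of $\xii{\bullet}$ by passing to $\yi{l} := \wi{l}^{-1} \cdot \xii{l} \in W \cdot \lambda$. Using $s_{\bti{l}} = \wi{l-1} s_{i_{l}} \wi{l-1}^{-1}$ and $\wi{l-1}^{-1} \wi{l} = s_{i_{l}}$, a short calculation converts \eqref{eq:xii} into
\begin{equation*}
\yi{l-1} = \begin{cases} s_{i_{l}} \cdot \yi{l} & \text{if } l \in \min S(xw_{0}, \bi), \\ \yi{l} & \text{otherwise.} \end{cases}
\end{equation*}
Since $\yi{m} = w_{0} \cdot \lambda$, descending induction yields the closed form $\yi{l} = v_{l} \cdot (w_{0} \cdot \lambda)$ with $v_{l} := s_{i_{c_{r+1}}} s_{i_{c_{r+2}}} \cdots s_{i_{c_{p}}}$; this is a suffix of the reduced expression $s_{i_{c_{1}}} \cdots s_{i_{c_{p}}} = xw_{0}$, hence itself reduced. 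Feeding this back gives
\begin{equation*}
\pair{\bti{l}}{\xii{l}} = \pair{\alpha_{i_{l}}}{s_{i_{l}} \cdot \yi{l}} = -\pair{\alpha_{i_{l}}}{\yi{l}} = -\pair{v_{l}^{-1} \cdot \alpha_{i_{l}}}{w_{0} \cdot \lambda},
\end{equation*}
and since $w_{0} \cdot \lambda$ is antidominant, this is nonnegative precisely when $v_{l}^{-1} \cdot \alpha_{i_{l}}$ is a positive root, equivalently when $\ell(s_{i_{l}} v_{l}) > \ell(v_{l})$.

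The main obstacle is this length inequality, which I would derive by contradiction from the lex-minimality of $\min S(xw_{0}, \bi)$. Suppose on the contrary that $\ell(s_{i_{l}} v_{l}) < \ell(v_{l})$. Applying the (left) strong exchange condition to the reduced expression $v_{l} = s_{i_{c_{r+1}}} \cdots s_{i_{c_{p}}}$ produces some $r+1 \le j \le p$ with $s_{i_{l}} v_{l} = s_{i_{c_{r+1}}} \cdots \widehat{s_{i_{c_{j}}}} \cdots s_{i_{c_{p}}}$, and hence
\begin{equation*}
xw_{0} = s_{i_{c_{1}}} \cdots s_{i_{c_{r}}} \cdot s_{i_{l}} \cdot s_{i_{c_{r+1}}} \cdots \widehat{s_{i_{c_{j}}}} \cdots s_{i_{c_{p}}}.
\end{equation*}
Because $c_{r} < l < c_{r+1}$, the index sequence $(c_{1}, \ldots, c_{r}, l, c_{r+1}, \ldots, \widehat{c_{j}}, \ldots, c_{p})$ is strictly increasing of length $p$, so it defines a new element of $S(xw_{0}, \bi)$; its first $r$ entries coincide with those of $\min S(xw_{0}, \bi)$, but its $(r+1)$-th entry is $l < c_{r+1}$, contradicting lex-minimality. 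This forces $\pair{\bti{l}}{\xii{l}} \ge 0$ and completes the proof.
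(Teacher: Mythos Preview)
Your argument is correct and follows essentially the same route as the paper: the paper first establishes the closed form $\xii{l}=\wi{l}\vi{l}w_{0}^{-1}\cdot\lambda$ (its Lemma~\ref{lem:prod}, with $\vi{l}$ equal to your $v_{l}$), then reduces the inequality $\pair{\bti{l}}{\xii{l}}\ge 0$ to the positivity of $v_{l}^{-1}\cdot\alpha_{i_{l}}$, and derives the latter by exactly the exchange-condition/lex-minimality contradiction you give. Your recursion for $\wi{l}^{-1}\cdot\xii{l}$ is just a slightly slicker packaging of the paper's Lemma~\ref{lem:prod}; otherwise the proofs coincide.
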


The proof of this lemma will be given 
in \S\ref{subsec:length}. 
From this lemma, using \cite[Theorem~7.1]{Kam1}, 
we see that there exists a unique MV datum 
$\mu_{\bullet}^{x,\,\bi}=(\mu_{w}^{x,\,\bi})_{w \in W}$ 
such that $\mu_{\wi{l}}^{x,\,\bi}=\xii{l}$ 
for every $0 \le l \le m$. 
%
%
\begin{prop} \label{prop:1sk}
Let $\bj \in R(w_{0})$ be another reduced word for $w_{0}$, and 
define an MV datum $\mu_{\bullet}^{x,\,\bj}$ 
in the same way as above, with $\bi$ replaced by $\bj$. 
Then, we have $\mu_{\bullet}^{x,\,\bi}=\mu_{\bullet}^{x,\,\bj}$. 
\end{prop}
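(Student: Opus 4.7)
The plan is to reduce the proposition to the case where $\bi$ and $\bj$ differ by a single braid move, using Remark~\ref{rem:braid}: by transitivity, it suffices to prove $\mu_\bullet^{x,\bi} = \mu_\bullet^{x,\bj}$ under the assumption that $\bi, \bj \in R(w_0)$ are related by a single $2$-move (at positions $k+1, k+2$) or a single $3$-move (at positions $k+1, k+2, k+3$).

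An MV datum is uniquely determined by its associated pseudo-Weyl polytope, and by the length formula \eqref{eq:n} the full datum is recovered from the vertex sequence $\mu^{x,\bi}_{\wi{l}} = \xii{l}$ along the $\bi$-path together with the MV datum conditions of Definition~\ref{dfn:MV}. By the existence-and-uniqueness statement quoted from \cite[Theorem~7.1]{Kam1} and used to produce $\mu_\bullet^{x,\bi}$ in the first place, two families of vertex values along different reduced words that agree at the common vertices $\wi{l} = \wj{l}$ and whose associated edge lengths satisfy the appropriate braid-move relation extend to one and the same MV datum. So I will verify: (i) $\xii{l} = \xij{l}$ whenever $\wi{l} = \wj{l}$, and (ii) the edge lengths $\Ni{l}, \Nj{l}$ extracted from the $\xi$-sequences by \eqref{eq:n} satisfy the corresponding relation in Definition~\ref{dfn:MV}(1) or Definition~\ref{dfn:MV}(2).

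The $2$-move case will be a short computation: since $s_is_j = s_js_i$ when $a_{ij}=0$, the intersections $\min S(xw_0,\bi) \cap \{k+1, k+2\}$ and $\min S(xw_0,\bj) \cap \{k+1, k+2\}$ fall into only a few cases, each producing $\Ni{k+1} = \Nj{k+2}$ and $\Ni{k+2} = \Nj{k+1}$ directly from the recursion \eqref{eq:xii}. The hard part will be the $3$-move case: here $\min S(xw_0,\bi) \cap \{k+1,k+2,k+3\}$ takes one of seven possible shapes (the forbidden subset being $\{k+1,k+3\}$, exactly as in the analysis within the proof of Proposition~\ref{prop:every}), and for each shape I must compute the resulting $\xii{l}, \xij{l}$ for $l \in \{k, k+1, k+2, k+3\}$ and verify the tropical Pl\"ucker relations \eqref{eq:3bm}. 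The combinatorial heart of the argument is the nontrivial identity $\Nj{k+2} = \min(\Ni{k+1}, \Ni{k+3})$, whose verification will use the braid identity $s_is_js_i = s_js_is_j$ together with a careful tally of which positions are skipped in each minimal subword. Once these cases are settled, the equalities $\xii{l} = \xij{l}$ for $l \le k$ and $l \ge k+3$ will follow automatically from the recursion \eqref{eq:xii} applied to common inputs starting from $\xii{m} = \xij{m} = \lambda$.
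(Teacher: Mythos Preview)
Your outline follows essentially the same route as the paper: reduce to a single braid move, then compare the two $\xi$-sequences inside and outside the braid window. The case analysis you sketch for the $3$-move (computing the edge lengths $\Ni{l}$ and $\Nj{l}$ and checking the tropical Pl\"ucker relation \eqref{eq:3bm}) matches the paper's computation.

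However, there is a genuine gap. Your claim that ``the equalities $\xii{l}=\xij{l}$ for $l\le k$ and $l\ge k+3$ will follow automatically from the recursion \eqref{eq:xii} applied to common inputs'' presupposes that the recursion inputs are indeed common --- in particular that $l$ appears in $\min S(xw_0,\bi)$ if and only if $l$ appears in $\min S(xw_0,\bj)$ for every $l\notin\{k+1,k+2,k+3\}$. This is not automatic: it amounts to saying that the lexicographically minimal subwords $\min S(xw_0,\bi)$ and $\min S(xw_0,\bj)$ agree outside the braid window, which the paper establishes separately as Proposition~\ref{prop:bmove} via an explicit map $\sigma_{\bi,\bj}:S(xw_0,\bi)\to S(xw_0,\bj)$ and a nontrivial minimality argument. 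Without this fact, the downward recursion from $\xii{m}=\xij{m}=\lambda$ could branch differently at some position $l>k+3$, and you would not even reach a common value $\xii{k+3}=\xij{k+3}$ at the top of the braid window. You need to supply this argument (or something equivalent) before the window analysis can proceed.

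A minor point: you count seven possible shapes for $[k+1,k+3]\cap\min S(xw_0,\bi)$, ruling out only $(k+1,k+3)$. In fact the singleton $(k+3)$ is also excluded by lexicographic minimality (replace it by $(k+1)$, since $i_{k+1}=i_{k+3}$), leaving six cases. The same minimality consideration is what drives Proposition~\ref{prop:bmove}, so getting comfortable with it will help close the main gap as well.
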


The proof of this proposition will be given 
in \S\ref{subsec:1sk}. It follows from 
this proposition that 
the MV datum $\mu_{\bullet}^{x}=(\mu_{w}^{x})_{w \in W}:=
\mu_{\bullet}^{x,\,\bi}$ does not depend on 
the choice of $\bi \in R(w_{0})$.
Moreover, the MV polytope $P(\mu_{\bullet}^{x})$ 
associated to the MV datum $\mu_{\bullet}^{x}$ 
(by \eqref{eq:poly}) is an element of $\mv(\lambda)$. 
Indeed, recall that each $w \in W$ can be written as 
$w=\wi{l}$ for some $\bi \in R(w_{0})$ and an integer 
$0 \le l \le m$. Then it follows that 
$\mu_{w}^{x}=\mu_{\wi{l}}^{x}=
 \mu_{\wi{l}}^{x,\,\bi}=\xii{l} \in W \cdot \lambda$
by the definition of $\mu_{\bullet}^{x}$. 
Since $w_{0}=\wi{m}$ for all $\bi \in R(w_{0})$, and 
since $\xii{m}=\lambda$, we get
$\mu_{w_{0}}^{x}=\lambda$. 
Also, because $\xii{l} \in W \cdot \lambda$ 
for all $\bi \in R(w_{0})$ and $0 \le l \le m$, 
we have 
%
%
\begin{align}
P(\mu_{\bullet}^{x}) & = 
 \Conv\,\bigl\{\mu_{w}^{x} \mid w \in W\bigr\} \nonumber \\
 & =
 \Conv\,\bigl\{\xii{l} \mid \bi \in R(w_{0}),\, 0 \le l \le m \bigr\} 
 \subset \Conv (W \cdot \lambda). \label{eq:inc}
\end{align}
%
%
\begin{thm} \label{thm:ext}
{\rm (1)} The weight of 
the MV polytope $P(\mu_{\bullet}^{x}) \in \mv(\lambda)$ 
is equal to $x \cdot \lambda$. 
Therefore, $P(\mu_{\bullet}^{x})$ is the extremal MV polytope 
$P_{x \cdot \lambda}$ of weight $x \cdot \lambda$. 

{\rm (2)} 
The extremal MV polytope 
$P_{x \cdot \lambda}=P(\mu_{\bullet}^{x})$ 
of weight $x \cdot \lambda$ is identical to 
the convex hull $\Conv(W_{\le x} \cdot \lambda)$ 
in $\Fh_{\BR}$ of the set $W_{\le x} \cdot \lambda$, 
where $W_{\le x}$ denotes the subset 
$\bigl\{z \in W \mid z \le x\bigr\}$ of $W$. 
\end{thm}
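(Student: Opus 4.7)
For part (1), I would choose a reduced word $\bi = (i_{1}, \ldots, i_{m}) \in R(w_{0})$ whose initial segment $(i_{1}, \ldots, i_{p})$ is a reduced expression of $xw_{0}$, so $\wi{p} = xw_{0}$. For this $\bi$, the sequence $(1, 2, \ldots, p)$ realizes $xw_{0}$ and is manifestly lex-minimal, so $\min S(xw_{0}, \bi) = (1, 2, \ldots, p)$. By the inductive definition \eqref{eq:xii}, one then has $\xii{0} = \xii{1} = \cdots = \xii{p}$, and by Example~\ref{ex:1sk}, $\xii{p} = \wi{p} w_{0} \cdot \lambda = x \cdot \lambda$. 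Therefore $\wt(P(\mu_{\bullet}^{x})) = \mu_{e}^{x} = \xii{0} = x \cdot \lambda$. Since the weight space $V(\lambda)_{x \cdot \lambda}$ is one-dimensional (as $x \cdot \lambda$ lies in the Weyl-group orbit of the highest weight $\lambda$), there is a unique MV polytope of weight $x \cdot \lambda$ in $\mv(\lambda)$, namely $P_{x \cdot \lambda}$; hence $P(\mu_{\bullet}^{x}) = P_{x \cdot \lambda}$.

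For part (2), I would prove the two set-inclusions separately. For $\Conv(W_{\le x} \cdot \lambda) \subset P_{x \cdot \lambda}$: since $u_{x \cdot \lambda}$ is the highest weight element of the opposite Demazure module $V^{x}(\lambda)$, it lies in $\CB^{x}(\lambda)$, so by Theorem~\ref{thm:opdem} we have $P_{x \cdot \lambda} \in \mv^{x}(\lambda)$. Given $z \le x$, the Bruhat-order anti-isomorphism $w \mapsto ww_{0}$ gives $zw_{0} \ge xw_{0}$, and by the standard chain property of the Bruhat order there is $\bi \in R(w_{0})$ with both $\wi{p} = xw_{0}$ and $\wi{\ell(zw_{0})} = zw_{0}$. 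Applying condition (Op.\ Dem.) in its ``for every $\bi$'' form, $\mu_{\wi{\ell(zw_{0})}}^{x} = \wi{\ell(zw_{0})} w_{0} \cdot \lambda = z \cdot \lambda$, so $z \cdot \lambda$ is a vertex of $P_{x \cdot \lambda}$; taking convex hulls yields the inclusion.

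For the reverse inclusion $P_{x \cdot \lambda} \subset \Conv(W_{\le x} \cdot \lambda)$: by \eqref{eq:inc} the polytope equals the convex hull of $\{\xii{l} : \bi \in R(w_{0}),\, 0 \le l \le m\}$, and by Proposition~\ref{prop:1sk} the value $\mu_{w}^{x} := \xii{l}$ (for $w = \wi{l}$) is independent of the choice of $\bi$. When $w$ is comparable to $xw_{0}$ in Bruhat order, I would choose $\bi$ with both $\wi{p} = xw_{0}$ and $\wi{\ell(w)} = w$; Example~\ref{ex:1sk} then yields $\mu_{w}^{x} = ww_{0} \cdot \lambda$ with $ww_{0} \le x$ (when $w \ge xw_{0}$) or $\mu_{w}^{x} = x \cdot \lambda$ (when $w \le xw_{0}$), and in either case $\mu_{w}^{x} \in W_{\le x} \cdot \lambda$. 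The main obstacle is the case when $w$ is incomparable to $xw_{0}$: here I would unwind the recursion \eqref{eq:xii} to obtain
\[
\xii{l} = s_{\bti{l_{1}}} s_{\bti{l_{2}}} \cdots s_{\bti{l_{q}}} \cdot \lambda,
\]
where $l_{1} < l_{2} < \cdots < l_{q}$ enumerate the indices of $\{l+1, \ldots, m\} \setminus \min S(xw_{0}, \bi)$, and use the identity $s_{\bti{k}} = \wi{k-1} s_{i_{k}} \wi{k-1}^{-1}$ to telescope this product into a tractable form. The hard step is then to verify that the resulting Weyl group element lies in $W_{\le x} \cdot W_{\lambda}$ (so that $\xii{l} \in W_{\le x} \cdot \lambda$); I would expect this to rest on the subword characterization of the Bruhat order together with the greedy construction of $\min S(xw_{0}, \bi)$.
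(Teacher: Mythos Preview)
Your argument for part~(1) is fine and matches the paper's (the paper uses the general formula \eqref{eq:xi} with $l=0$, which is equivalent).

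For part~(2), however, there is a recurring error. You repeatedly invoke the existence of $\bi \in R(w_{0})$ with \emph{both} $\wi{p}=xw_{0}$ and $\wi{\ell(zw_{0})}=zw_{0}$ (and likewise $\wi{\ell(w)}=w$). This would require a reduced expression of the longer element having a reduced expression of the shorter one as a \emph{prefix}; that is the weak (right) order condition, not the Bruhat order condition. Bruhat comparability only guarantees a subword, not a prefix. For instance, in $S_{3}$ one has $s_{1}\le s_{2}s_{1}$ in Bruhat order, but no reduced expression of $s_{2}s_{1}$ begins with $s_{1}$. So the ``chain property'' you cite does not yield what you need, and both halves of your argument for~(2) break at exactly this point.

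For the inclusion $\Conv(W_{\le x}\cdot\lambda)\subset P_{x\cdot\lambda}$, your idea of routing through Theorem~\ref{thm:opdem} is salvageable with a small change: rather than looking for a single $\bi$ realizing both $xw_{0}$ and $zw_{0}$ as prefixes, use the monotonicity $\mv^{z}(\lambda)\supset\mv^{x}(\lambda)$ for $z\le x$ (from \cite[Proposition~3.2.4]{Kas}) to conclude $P_{x\cdot\lambda}\in\mv^{z}(\lambda)$, and then apply condition~(Op.\ Dem.) for $z$ with any $\bi$ satisfying $\wi{\ell(zw_{0})}=zw_{0}$. The paper itself proves this inclusion by a direct combinatorial argument on $\min S(xw_{0},\,\bi)$ (the Claim in \S\ref{subsec:prf-thmext}), without invoking Theorem~\ref{thm:opdem}.

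For the inclusion $P_{x\cdot\lambda}\subset\Conv(W_{\le x}\cdot\lambda)$, your case split on comparability is both flawed (same prefix issue) and incomplete (the incomparable case is only a hope). The paper's proof is much cleaner: it shows $\xii{l}\in W_{\le x}\cdot\lambda$ for all $\bi$ and all $l$ by \emph{ascending} induction on $l$, starting from $\xii{0}=x\cdot\lambda$. The inductive step uses the positivity $\pair{\bti{l}}{\xii{l}}\ge 0$ established in Lemma~\ref{lem:length}: writing $\xii{l-1}=z\cdot\lambda$ with $z\le x$, strict positivity forces $z^{-1}\cdot\bti{l}$ to be a negative root, whence $s_{\bti{l}}z<z\le x$ and $\xii{l}=s_{\bti{l}}z\cdot\lambda\in W_{\le x}\cdot\lambda$. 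This avoids any telescoping of reflection products and any case analysis on comparability.
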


The proof of this theorem will be given 
in \S\ref{subsec:prf-thmext}. 
By combining Theorems~\ref{thm:opdem} and \ref{thm:ext}, 
we obtain a polytopal condition for an MV polytope 
$P \in \mv(\lambda)$ to lie in the opposite Demazure crystal 
$\mv^{x}(\lambda)$ for $x \in W$.

\begin{rem}
The statement of Theorem~\ref{thm:ext}\,(2) seems to be 
known to some experts although we were unable to find it explicitly 
stated in the literature (cf. \cite{AP}, and also \cite[\S4]{Atiyah}). 
In fact, as was reported to us by Kato \cite{Kat}, the equality 
$P_{x \cdot \lambda}=\Conv(W_{\le x} \cdot \lambda)$ 
follows from the first equality in \cite[(3.6)]{MV2} 
(more precisely, from \cite[Lemma~3.2]{Haines} or 
\cite[Lemme~5.2]{NP}) by using the geometry of the affine 
Grassmannian $\CG r$ and the finer Bruhat decomposition:
\begin{equation*}
G=\bigsqcup_{z \in W^{\lambda}_{\min}} N\dot{z}P_{\lambda}, 
\end{equation*}
where $P_{\lambda} \ (\supset B)$ is 
a standard parabolic subgroup of $G$ determined by the set of 
roots $\alpha$ for which $\pair{\alpha}{\lambda} \le 0$. 
%

However, the equality 
$P_{x \cdot \lambda}=\Conv(W_{\le x} \cdot \lambda)$ 
is not enough for our purposes. To get an understanding 
of the extremal MV polytope $P_{x \cdot \lambda}$ 
as a pseudo-Weyl polytope, we need to identify explicitly the vertex 
$\mu_{w}$ of $P_{x \cdot \lambda}$ for each $w \in W$; 
such data can be obtained from our construction above 
of $P_{x \cdot \lambda}$. In particular, it follows 
immediately from Theorems~\ref{thm:main} and \ref{thm:opdem} 
that $P_{x \cdot \lambda} \in 
\mv_{x}(\lambda) \cap \mv^{x}(\lambda)$. 
\end{rem}

%
\subsection{Proof of Lemma~\ref{lem:length}.}
\label{subsec:length}
Keep the notation and assumptions of \S\ref{subsec:extremal}. 
Let $\bi=(i_{1},\,i_{2},\,\dots,\,i_{m})$ 
be an arbitrary element of $R(w_{0})$. 
For each $0 \le l \le m$, we denote by 
$[l+1,\,m] \setminus \min S(xw_{0},\,\bi)$ 
the sequence obtained by enumerating (in increasing order)
the integers in $[l+1,\,m]$ that do not appear in the 
sequence $\min S(xw_{0},\,\bi)$. 
Now, we write the sequence 
$[l+1,\,m] \setminus \min S(xw_{0},\,\bi)$ as 
$(b_{1},\,b_{2},\,\dots,\,b_{t_l})$, and set 
\begin{equation*}
\yi{l}:=
 s_{\bti{b_1}}s_{\bti{b_2}} \cdots s_{\bti{b_{t_l}}}; 
\end{equation*}
if all the integers in $[l+1,\,m]$ appear in the sequence 
$\min S(xw_{0},\,\bi)$, then we set 
$[l+1,\,m] \setminus \min S(xw_{0},\,\bi)=\emptyset$, and 
$\yi{l}:=e \in W$. 
Note that $\xii{l}=\yi{l} \cdot \lambda$ 
for all $0 \le l \le m$. Also, for each $0 \le l \le m$, 
we write the sequence 
$[l+1,\,m] \cap \min S(xw_{0},\,\bi)$ as 
$(c_{1},\,c_{2},\,\dots,\,c_{u_l})$, 
with $u_l+t_l=m-l$, and set 
\begin{equation*}
\vi{l}:=\si{c_{1}}\si{c_{2}} \cdots \si{c_{u_l}};
\end{equation*}
if $[l+1,\,m] \cap \min S(xw_{0},\,\bi)=\emptyset$, 
then we set $\vi{l}:=e \in W$. 
%
%
\begin{lem} \label{lem:prod}
With the notation above, 
we have $\yi{l}=\wi{l}\vi{l}w_{0}^{-1}$
for every $0 \le l \le m$. 
Hence, for every $0 \le l \le m$, we have
%
%
\begin{equation} \label{eq:xi}
\xii{l}=\wi{l}\vi{l}w_{0}^{-1} \cdot \lambda.
\end{equation}
\end{lem}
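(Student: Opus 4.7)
The plan is to prove Lemma~\ref{lem:prod} by descending induction on $l$, from $l=m$ down to $l=0$. The identity \eqref{eq:xi} then follows immediately by applying both sides of $\yi{l}=\wi{l}\vi{l}w_{0}^{-1}$ to $\lambda$ and invoking the equality $\xii{l}=\yi{l} \cdot \lambda$ already recorded preceding the lemma statement.

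For the base case $l=m$, both $[m+1,m] \cap \min S(xw_{0},\bi)$ and $[m+1,m] \setminus \min S(xw_{0},\bi)$ are empty, so by convention $\vi{m}=e$ and $\yi{m}=e$; since $\wi{m}=w_{0}$, the identity $\yi{m}=\wi{m}\vi{m}w_{0}^{-1}=e$ is trivial.

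For the inductive step, I assume $\yi{l}=\wi{l}\vi{l}w_{0}^{-1}$ and split into two cases according to whether $l$ appears in $\min S(xw_{0},\bi)$. If $l \in \min S(xw_{0},\bi)$, then passing from $[l+1,m]$ to $[l,m]$ simply prepends $l$ to the first subsequence and leaves the complementary subsequence unchanged; hence $\vi{l-1}=\si{l}\vi{l}$ while $\yi{l-1}=\yi{l}$. Then $\wi{l-1}\vi{l-1}w_{0}^{-1}=\wi{l-1}\si{l}\vi{l}w_{0}^{-1}=\wi{l}\vi{l}w_{0}^{-1}=\yi{l}=\yi{l-1}$, as desired. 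If instead $l \notin \min S(xw_{0},\bi)$, then $\vi{l-1}=\vi{l}$ while $\yi{l-1}=s_{\bti{l}}\yi{l}$. Using the standard reflection identity $s_{\bti{l}}=s_{\wi{l-1} \cdot \alpha_{i_{l}}}=\wi{l-1}\si{l}\wi{l-1}^{-1}$ together with $\wi{l}=\wi{l-1}\si{l}$, I compute $s_{\bti{l}}\wi{l}=\wi{l-1}\si{l}\wi{l-1}^{-1}\wi{l-1}\si{l}=\wi{l-1}$, so that $\yi{l-1}=s_{\bti{l}}\wi{l}\vi{l}w_{0}^{-1}=\wi{l-1}\vi{l}w_{0}^{-1}=\wi{l-1}\vi{l-1}w_{0}^{-1}$.

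There is essentially no deep obstacle here: the argument is pure bookkeeping plus the conjugation identity $s_{w \cdot \alpha}=wsw^{-1}$. The only care required is to correctly track how the decomposition $[l,m]=\{l\} \cup [l+1,m]$ distributes across $\min S(xw_{0},\bi)$ and its complement, which is precisely what produces the two-case analysis above.
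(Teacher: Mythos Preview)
Your proof is correct and follows essentially the same approach as the paper: descending induction on $l$, with the same base case and the same two-case split according to whether the new index belongs to $\min S(xw_{0},\bi)$, using the conjugation identity $s_{\bti{l}}\wi{l}=\wi{l-1}$ in the second case. The only difference is a harmless index shift (you pass from $l$ to $l-1$, the paper from $l+1$ to $l$).
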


\begin{proof}
We show the lemma by descending induction on $l$. 
Assume first that $l=m$. Then we have $\yi{m}=e$ and 
$\vi{m}=e$ by definition. Also, from the definition, 
we have $\wi{m}=w_{0}$. Hence we get 
$\wi{m}\vi{m}w_{0}^{-1}=w_{0}ew_{0}^{-1}=e=\yi{m}$. 
Assume now that $l < m$. 
If $l+1$ appears in the sequence $\min S(xw_{0},\,\bi)$, 
then we have 
\begin{align*}
& [l+1,\,m] \setminus \min S(xw_{0},\,\bi) = 
  [l+2,\,m] \setminus \min S(xw_{0},\,\bi), \\
& [l+1,\,m] \cap \min S(xw_{0},\,\bi) =
  \Bigl(l+1,\,[l+2,\,m] \cap \min S(xw_{0},\,\bi)\Bigr),
\end{align*}
and hence $\yi{l} = \yi{l+1}$ and $\vi{l}=\si{l+1}\vi{l+1}$. 
Therefore, we conclude that 
\begin{align*}
\yi{l} & = \yi{l+1} = \wi{l+1}\vi{l+1}w_{0}^{-1} 
  \quad \text{by the induction hypothesis} \\
& = \wi{l}\si{l+1}\vi{l+1}w_{0}^{-1} = \wi{l}\si{l+1}\si{l+1}\vi{l}w_{0}^{-1} 
  = \wi{l}\vi{l}w_{0}^{-1}.
\end{align*}
If $l+1$ does not appear in the sequence 
$\min S(xw_{0},\,\bi)$, then we have 
\begin{align*}
& [l+1,\,m] \setminus \min S(xw_{0},\,\bi) = 
  \Bigl(l+1,\,[l+2,\,m] \setminus \min S(xw_{0},\,\bi)\Bigr), \\
& [l+1,\,m] \cap \min S(xw_{0},\,\bi) =
  [l+2,\,m] \cap \min S(xw_{0},\,\bi),
\end{align*}
and hence $\yi{l} = s_{\bti{l+1}}\yi{l+1}$ and $\vi{l}=\vi{l+1}$.
By the induction hypothesis, we get 
\begin{equation*}
\yi{l} = s_{\bti{l+1}} \yi{l+1} = s_{\bti{l+1}} \wi{l+1}\vi{l+1}w_{0}^{-1}.
\end{equation*}
Since
$s_{\bti{l+1}}\wi{l+1}
 = \bigl(\wi{l}\si{l+1}(\wi{l})^{-1}\bigr)\wi{l+1} 
 = \wi{l}$ 
(note that $\si{l+1}(\wi{l})^{-1}\wi{l+1}=
(\wi{l}\si{l+1})^{-1}\wi{l+1}=(\wi{l+1})^{-1}\wi{l+1}=e$), 
it follows that 
\begin{equation*}
\yi{l}= s_{\bti{l+1}} \wi{l+1}\vi{l+1}w_{0}^{-1}=\wi{l}\vi{l+1}w_{0}^{-1}=
\wi{l}\vi{l}w_{0}^{-1}.
\end{equation*}
Thus we have shown that $\yi{l}=\wi{l}\vi{l}w_{0}^{-1}$ 
for all $0 \le l \le m$. This proves the lemma. 
\end{proof}

\begin{proof}[Proof of Lemma~\ref{lem:length}]
Let $\bi=(i_{1},\,i_{2},\,\dots,\,i_{m})$ 
be an arbitrary element of $R(w_{0})$, and 
let $1 \le l \le m$. 
First we observe that $\wi{l-1} \cdot h_{i_{l}}=
(\wi{l-1} \cdot \alpha_{i_{l}})^{\vee}=(\bti{l})^{\vee}$, 
where $\beta^{\vee}$ denotes the dual root of 
a positive root $\beta$, and hence that 
\begin{equation*}
s_{\bti{l}} \cdot \xi^{\bi}_{l}=
\xi^{\bi}_{l}-\pair{\bti{l}}{\xi^{\bi}_{l}}(\bti{l})^{\vee}=
\xi^{\bi}_{l}-\pair{\bti{l}}{\xi^{\bi}_{l}}(\wi{l-1} \cdot h_{i_{l}}). 
\end{equation*}
Therefore, we see from the definition \eqref{eq:xii} that 
%
%
\begin{equation} \label{eq:xii-dif}
\xi^{\bi}_{l}-\xi^{\bi}_{l-1}=
\begin{cases}
 0 & \text{if $l$ appears in $\min S(xw_{0},\,\bi)$}, \\[1.5mm]
 \pair{\bti{l}}{\xi^{\bi}_{l}}(\wi{l-1} \cdot h_{i_{l}})
   & \text{otherwise}.
\end{cases}
\end{equation}
So, it suffices to show that 
$\pair{\bti{l}}{\xi^{\bi}_{l}} \ge 0$ 
under the assumption that $l$ does not appear in 
the sequence $\min S(xw_{0},\,\bi)$. 
By \eqref{eq:xi}, we have
\begin{equation*}
\xi^{\bi}_{l} = \wi{l}\vi{l}w_{0}^{-1} \cdot \lambda = 
  \wi{l}\,(\si{c_1}\si{c_2} \cdots \si{c_{u_l}})\,
  w_{0}^{-1} \cdot \lambda,
\end{equation*}
where, as above, 
$[l+1,\,m] \cap \min S(xw_{0},\,\bi) 
  = (c_{1},\,c_{2},\,\dots,\,c_{u_{l}})$. 
Hence we calculate:
\begin{align*}
\pair{\bti{l}}{\xi^{\bi}_{l}} 
& = 
\Bpair{ \wi{l-1} \cdot \alpha_{i_{l}} }
 {\wi{l}\,(\si{c_1}\si{c_2} \cdots \si{c_{u_l}})\,
   w_{0}^{-1} \cdot \lambda
 } \\[1.5mm]
& = 
\Bpair{ (\wi{l})^{-1}\wi{l-1} \cdot \alpha_{i_{l}} }
 {(\si{c_1}\si{c_2} \cdots \si{c_{u_l}})\,
   w_{0}^{-1} \cdot \lambda
 } \\[1.5mm]
& = 
\Bpair {\si{l} \cdot \alpha_{i_{l}} }
 {(\si{c_1}\si{c_2} \cdots \si{c_{u_l}})\,
   w_{0}^{-1} \cdot \lambda
  } \\[1.5mm]
& =
-\Bpair{ \alpha_{i_{l}} }
  {(\si{c_1}\si{c_2} \cdots \si{c_{u_l}})\,
   w_{0}^{-1} \cdot \lambda
  } \\[1.5mm]
& = 
-\Bpair{ w_{0}(\si{c_{u_l}} \cdots \si{c_2}\si{c_1}) \cdot \alpha_{i_{l}} }
  {\lambda}.
\end{align*}
Since $\lambda \in X_{\ast}(T) \subset \Fh_{\BR}$ 
is a dominant coweight, it suffices to show that 
$(\si{c_{u_l}} \cdots \si{c_2}\si{c_1}) \cdot \alpha_{i_{l}}$
is a positive root. Suppose, contrary to our claim, 
that it is a negative root. 
Then, by the exchange condition, 
$\si{c_1}\si{c_2} \cdots \si{c_{u_l}}$ has 
a reduced expression of the form: 
\begin{equation*}
\si{l} \si{c_1} \cdots \si{c_{r-1}}\si{c_{r+1}} \cdots \si{c_{u_l}}
\end{equation*}
for some $1 \le r \le u_{l}$.
We now write the sequence 
$[1,\,l] \cap \min S(xw_{0},\,\bi)$ as: 
\begin{equation*}
[1,\,l] \cap \min S(xw_{0},\,\bi)
  = \bigl\{d_{1},\,d_{2},\,\dots,\,d_{p-u_l}\bigr\};
\end{equation*}
note that $d_{p-u_l} < l$ since $l$ does not 
appear in $\min S(xw_{0},\,\bi)$. 
Then, since $\ell(xw_{0})=p=(p-u_{l})+u_{l}$, 
\begin{align*}
xw_{0} & = 
 \si{d_1}\si{d_2} \cdots \si{d_{p-u_l}}
 \si{c_1}\si{c_2} \cdots  \si{c_{u_l}} \\
 & =
 \si{d_1}\si{d_2} \cdots \si{d_{p-u_l}}
 \si{l} \si{c_1} \cdots \si{c_{r-1}}\si{c_{r+1}} \cdots \si{c_{u_l}}
\end{align*}
is a reduced expression of $xw_{0}$, and hence 
the sequence 
\begin{equation*}
(d_1,\,d_2,\,\dots,\,d_{p-u_l},\,
 l,\,
 c_1,\,\dots,\,c_{r-1},\,c_{r+1},\,\dots,\,c_{u_l})
\end{equation*}
is an element of $S(xw_{0},\,\bi)$. 
However, since $l < l+1 \le c_{1}$, 
this sequence is strictly less than
$\min S(xw_{0},\,\bi)$ with respect to 
the lexicographic ordering $\succeq$,
which is a contradiction. 
Thus we have proved Lemma~\ref{lem:length}. 
\end{proof}

%
\subsection{Proof of Proposition~\ref{prop:1sk}.}
\label{subsec:1sk}

Keep the notation and assumptions of \S\ref{subsec:extremal}.
Assume that 
$\bi=(i_{1},\,i_{2},\,\dots,\,i_{m}) \in R(w_{0})$ and 
$\bj=(j_{1},\,j_{2},\,\dots,\,j_{m}) \in R(w_{0})$ 
are related by a $2$-move or a $3$-move. 
We will study the relation between the two elements
$\min S(xw_{0},\,\bi)$ and $\min S(xw_{0},\,\bj)$.

Assume first that 
$\bi$ and $\bj$ are related by a $2$-move 
as in \eqref{eq:2move}, i.e., 
\begin{equation*}
\begin{array}{l}
\bi=(i_{1},\,\dots,\,i_{k},\,i,\,j,\,i_{k+3},\,\dots,\,i_{m}), \\[1.5mm]
\bj=(i_{1},\,\dots,\,i_{k},\,j,\,i,\,i_{k+3},\,\dots,\,i_{m})
\end{array}
\end{equation*}
for some indices $i,\,j \in I$ with $a_{ij}=a_{ji}=0$ 
and an integer $0 \le k \le m-2$. 
We define a map 
$\sigma_{\bi,\,\bj}:S(xw_{0},\,\bi) \rightarrow S(xw_{0},\,\bj)$ 
(resp., $\sigma_{\bj,\,\bi}:S(xw_{0},\,\bj) \rightarrow S(xw_{0},\,\bi)$) 
as follows. 
Let $\ba=(a_{1},\,a_{2},\,\dots,\,a_{p}) \in S(xw_{0},\,\bi)$ 
(resp., $\ba=(a_{1},\,a_{2},\,\dots,\,a_{p}) \in S(xw_{0},\,\bj)$). 
We have 
\begin{align*}
& [1,\,k] \cap \ba
  =(a_{1},\,\dots,\,a_{u_0}), \\
& [k+1,\,k+2] \cap \ba
  =(a_{u_0+1},\,\dots,\,a_{u_1}), \\
& [k+3,\,m] \cap \ba
  =(a_{u_1+1},\,\dots,\,a_{p})
\end{align*}
for $0 \le u_{0} \le u_{1} \le p$. 
Then we set $\ba':=[k+1,\,k+2] \cap \ba=(a_{u_{0}+1},\,\dots,\,a_{u_{1}})$, and 
define a strictly increasing sequence 
$\bb=(b_{1},\,b_{2},\,\dots,\,b_{p})$ by: 
\begin{equation*}
b_{q}=a_{q} \quad \text{for $1 \le q \le u_{0}$ and $u_{1}+1 \le q \le p$}, 
\end{equation*}
\begin{equation*}
(b_{u_{0}+1},\,\dots,\,b_{u_{1}})=
\begin{cases}
\emptyset & 
  \text{if $\ba'=\emptyset$}, \\[1.5mm]
(k+1) & 
  \text{if $\ba'=(k+2)$}, \\[1.5mm]
(k+2) & 
  \text{if $\ba'=(k+1)$}, \\[1.5mm]
(k+1,\,k+2) & 
  \text{if $\ba'=(k+1,\,k+2)$}.
\end{cases}
\end{equation*}
It is obvious that 
$\bb \in S(xw_{0},\,\bj)$ (resp., $\bb \in S(xw_{0},\,\bi)$). 
We now set $\sigma_{\bi,\,\bj}(\ba):=\bb$ 
(resp., $\sigma_{\bj,\,\bi}(\ba):=\bb$).
%
%
\begin{rem} \label{rem:2-min}
Assume that $\bi$ and $\bj$ are related by a $2$-move as above. 
Obviously, 
$\sigma_{\bj,\,\bi} \circ \sigma_{\bi,\,\bj} = \id_{S(xw_0,\,\bi)}$ and 
$\sigma_{\bi,\,\bj} \circ \sigma_{\bj,\,\bi} = \id_{S(xw_0,\,\bj)}$. 
\end{rem}

Assume next that $\bi$ and $\bj$ are related 
by a $3$-move as in \eqref{eq:3move}, i.e., 
\begin{equation*}
\begin{array}{l}
\bi=(i_{1},\,\dots,\,i_{k},\,i,\,j,\,i,\,i_{k+4},\,\dots,\,i_{m}), \\[1.5mm]
\bj=(i_{1},\,\dots,\,i_{k},\,j,\,i,\,j,\,i_{k+4},\,\dots,\,i_{m})
\end{array}
\end{equation*}
for some indices $i,\,j \in I$ with $a_{ij}=a_{ji}=-1$ 
and an integer $0 \le k \le m-3$. 
We define a map 
$\sigma_{\bi,\,\bj}:S(xw_{0},\,\bi) \rightarrow S(xw_{0},\,\bj)$ 
(resp., $\sigma_{\bj,\,\bi}:S(xw_{0},\,\bj) \rightarrow S(xw_{0},\,\bi)$) 
as follows. 
Let $\ba=(a_{1},\,a_{2},\,\dots,\,a_{p}) \in S(xw_{0},\,\bi)$ 
(resp., $\ba=(a_{1},\,a_{2},\,\dots,\,a_{p}) \in S(xw_{0},\,\bj)$). 
We have 
\begin{align*}
& [1,\,k] \cap \ba
  =(a_{1},\,\dots,\,a_{u_0}), \\
& [k+1,\,k+3] \cap \ba
  =(a_{u_0+1},\,\dots,\,a_{u_1}), \\
& [k+4,\,m] \cap \ba
  =(a_{u_1+1},\,\dots,\,a_{p})
\end{align*}
for $0 \le u_{0} \le u_{1} \le p$. 
Then we set $\ba':=[k+1,\,k+3] \cap \ba=(a_{u_{0}+1},\,\dots,\,a_{u_{1}})$, and 
define a strictly increasing sequence 
$\bb=(b_{1},\,b_{2},\,\dots,\,b_{p})$ by: 
\begin{equation*}
b_{q}=a_{q} \quad \text{for $1 \le q \le u_{0}$ and $u_{1}+1 \le q \le p$}, 
\end{equation*}
\begin{equation*}
(b_{u_{0}+1},\,\dots,\,b_{u_{1}}) = 
\begin{cases}
\emptyset & 
  \text{if $\ba'=\emptyset$}, \\[1.5mm]
(k+2) & 
  \text{if $\ba'=(k+1)$ or $(k+3)$}, \\[1.5mm]
(k+1) & 
  \text{if $\ba'=(k+2)$}, \\[1.5mm]
(k+2,\,k+3) & 
  \text{if $\ba'=(k+1,\,k+2)$}, \\[1.5mm]
(k+1,\,k+2) & 
  \text{if $\ba'=(k+2,\,k+3)$}, \\[1.5mm]
(k+1,\,k+2,\,k+3) & 
  \text{if $\ba'=(k+1,\,k+2,\,k+3)$}.
\end{cases}
\end{equation*}
It is easily seen that 
$\bb \in S(xw_{0},\,\bj)$ (resp., $\bb \in S(xw_{0},\,\bi)$). 
We now set $\sigma_{\bi,\,\bj}(\ba):=\bb$ 
(resp., $\sigma_{\bj,\,\bi}(\ba):=\bb$).
%
%
\begin{rem} \label{rem:3-min}
Assume that $\bi$ and $\bj$ are related by a $3$-move as above, and 
take $\ba=(a_{1},\,a_{2},\,\dots,\,a_{p})$ to be the minimum element 
$\min S(xw_{0},\,\bi)$. It follows from the minimality of 
$\ba=(a_{1},\,a_{2},\,\dots,\,a_{p}) \in S(xw_{0},\,\bi)$ 
with respect to the lexicographic ordering that 
if $u_{1}=u_{0}+1$, then $\ba'=(k+1)$ or $(k+2)$. 
Using this, we see easily from the definitions of the maps 
$\sigma_{\bi,\,\bj}$ and $\sigma_{\bj,\,\bi}$ that 
\begin{equation*}
\sigma_{\bj,\,\bi}\bigl(\sigma_{\bi,\,\bj}(\min S(xw_0,\,\bi))\bigr)=\min S(xw_0,\,\bi).
\end{equation*}
Similarly, we obtain 
\begin{equation*}
\sigma_{\bi,\,\bj}\bigl(\sigma_{\bj,\,\bi}(\min S(xw_0,\,\bj))\bigr)=\min S(xw_0,\,\bj).
\end{equation*}
\end{rem}

%
\begin{prop} \label{prop:bmove}
Assume that $\bi=(i_{1},\,i_{2},\,\dots,\,i_{m}),\,
\bj=(j_{1},\,j_{2},\,\dots,\,j_{m}) \in R(w_{0})$ 
are related by a $2$-move or a $3$-move as above.
Then, we have 
\begin{equation*}
\sigma_{\bi,\,\bj}(\min S(xw_0,\,\bi))=\min S(xw_0,\,\bj) 
\quad \text{and} \quad
\sigma_{\bj,\,\bi}(\min S(xw_0,\,\bj))=\min S(xw_0,\,\bi).
\end{equation*}
\end{prop}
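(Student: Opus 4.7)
The plan is to reduce the proposition to a direct case analysis governed by the braid relations. Fix $\ba^{\bi} := \min S(xw_0,\bi)$; I will construct $\min S(xw_0,\bj)$ explicitly and show that it coincides with $\sigma_{\bi,\bj}(\ba^{\bi})$. Since $\sigma_{\bi,\bj}$ and $\sigma_{\bj,\bi}$ are defined by templates completely symmetric in $\bi$ and $\bj$, the second equality of the proposition will follow from the first by swapping roles, so I need only prove one of them.

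First I would verify that the portions of $\ba^{\bi}$ and of $\min S(xw_0,\bj)$ lying in the outer regions $[1,k]$ and $[k+4,m]$ (or $[k+3,m]$ for a $2$-move) coincide. The key observation is that, for any prefix of positions ending at some $l \le k$, the condition for it to extend to an element of $S(xw_0,\bi)$ is equivalent to the analogous condition for $S(xw_0,\bj)$: the braid identities $s_is_j=s_js_i$ and $s_is_js_i=s_js_is_j$ yield a length-preserving bijection between reduced subwords of the two braid segments with the same product, so the set of achievable residual Weyl elements from positions strictly greater than $l$ is the same in $\bi$ as in $\bj$. Hence the lex-min greedy choice of the next position in $[1,k]$ is determined by identical extendability data, and the same reasoning handles the right outer region once the two inner parts are shown to yield the same residual element.

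Next, with the common outer prefix fixed, I would analyze the inner part. Let $w_{\mathrm{rem}}$ denote the Weyl element that the inner-plus-outer-suffix subword must realize. Call $w' \in W' := \langle s_i, s_j\rangle$ \emph{admissible} if $w'$ is a left factor of some reduced expression of $w_{\mathrm{rem}}$ and $(w')^{-1}w_{\mathrm{rem}}$ is a reduced-subword product of the common suffix word $(i_{k+4}, \dots, i_m)$; write $\mathcal{A}$ for the set of admissible $w'$. Note $\mathcal{A}$ depends only on $w_{\mathrm{rem}}$ and the suffix, not on $\bi$ versus $\bj$. Using the standard fact that the reduced-subword products of a fixed word form an order ideal in Bruhat order, I would show that $\mathcal{A}$ is an upper set in the left weak order on $W'$; since $W'$ is $\BZ/2\times\BZ/2$ or $S_3$, this weak order is a lattice, so $\mathcal{A}$ has a unique maximum $w^{\max}$. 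The greedy selection then forces the inner part of $\min S(xw_0,\bi)$ (resp.\ $\min S(xw_0,\bj)$) to be the lex-smallest subword of the $\bi$-segment (resp.\ $\bj$-segment) with product $w^{\max}$; a direct inspection shows that these two lex-smallest inner subwords are exactly those paired by $\sigma_{\bi,\bj}$. This simultaneously recovers Remark~\ref{rem:3-min}, since the lex-smallest $\bi$-subword with product $s_i$ is $(k+1)$, never $(k+3)$. Combining the three regions yields $\sigma_{\bi,\bj}(\min S(xw_0,\bi))=\min S(xw_0,\bj)$.

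The main obstacle will be the upper-set property of $\mathcal{A}$. A priori, one could imagine greedy in $\bi$ entering the segment at $k+1$ with letter $s_i$ while greedy in $\bj$ enters at $k+1$ with $s_j$, producing different inner products and breaking the matching under $\sigma_{\bi,\bj}$. The upper-set property rules this out: if both $s_i$ and $s_j$ lie in $\mathcal{A}$, then so does the longest element $s_is_js_i$ of $W'$, and both greedy algorithms are then forced to pick it and occupy the entire segment. The $2$-move case is comparatively routine, since every element of $W'$ admits a unique reduced expression inside each segment and the case list collapses.
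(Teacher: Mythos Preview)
Your approach is correct in spirit and takes a genuinely different route from the paper's. The paper argues by contradiction: setting $\bb := \sigma_{\bi,\bj}(\min S(xw_0,\bi))$ and $\bc := \min S(xw_0,\bj)$, it assumes $\bb \succneqq \bc$ and derives a contradiction region by region, first forcing the prefixes in $[1,k]$ to agree, then running a direct case analysis on $[k+1,k+3]\cap\bc$ using the exchange condition, and finally treating the suffix. Your proposal instead isolates an intrinsic invariant --- the set $\mathcal{A}$ of admissible inner products and its maximum $w^{\max}$ --- that is independent of whether one reads $\bi$ or $\bj$; once one knows that both greedy algorithms compute the same $w^{\max}$ and then select the lex-smallest subword of their respective segment with that product, the matching under $\sigma_{\bi,\bj}$ is a six-line table. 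This is more conceptual, it explains \emph{why} the paper's case analysis closes, and it identifies $w^{\max}$ explicitly as the $W'$-component of $w_{\mathrm{rem}}$ in the parabolic factorization. The paper's approach, by contrast, uses no weak-order structure and stays entirely within subword combinatorics.

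One point needs correction. Your claim that $\mathcal{A}$ is an upper set in the left weak order on all of $W'$ is false as stated: if it were, any nonempty $\mathcal{A}$ would contain the longest element $s_is_js_i$ of $W'$, which fails whenever $w_{\mathrm{rem}}$ has a short $W'$-component (e.g.\ $w_{\mathrm{rem}}=s_i$ gives $\mathcal{A}\subseteq\{e,s_i\}$, yet $s_j\ge e$). What is true is that the ``left factor of $w_{\mathrm{rem}}$'' condition already confines $\mathcal{A}$ to the interval $[e,\,(w_{\mathrm{rem}})_{W'}]$ in left weak order on $W'$, and \emph{within that interval} $\mathcal{A}$ is upward closed: if $w'\in\mathcal{A}$ and $w'\le w''\le (w_{\mathrm{rem}})_{W'}$ in left weak order, then $(w'')^{-1}w_{\mathrm{rem}}$ lies below $(w')^{-1}w_{\mathrm{rem}}$ in right weak order, hence in Bruhat order, so the suffix-achievability condition is preserved. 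This yields $w^{\max}=(w_{\mathrm{rem}})_{W'}$, and with that correction your argument goes through unchanged.
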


\begin{proof}
We prove that $\sigma_{\bi,\,\bj}(\min S(xw_0,\,\bi))=\min S(xw_0,\,\bj)$ 
in the case that $\bi$ and $\bj$ are related by a $3$-move as in \eqref{eq:3move}; 
the proof of the equation 
$\sigma_{\bj,\,\bi}(\min S(xw_0,\,\bj))=\min S(xw_0,\,\bj)$ is similar, 
and the proof for the case of $2$-move is simpler. 
For simplicity of notation, we set 
\begin{align*}
& \ba =(a_{1},\,a_{2},\,\dots,\,a_{p}):=\min S(xw_0,\,\bi), \\
& \bb =(b_{1},\,b_{2},\,\dots,\,b_{p}):=
  \sigma_{\bi,\,\bj}(\min S(xw_0,\,\bi))=\sigma_{\bi,\,\bj}(\ba), \\ 
& \bc =(c_{1},\,c_{2},\,\dots,\,c_{p}):=\min S(xw_0,\,\bj), \\
& \bd =(d_{1},\,d_{2},\,\dots,\,d_{p}):=
  \sigma_{\bj,\,\bi}(\min S(xw_0,\,\bj))=\sigma_{\bj,\,\bi}(\bc).
\end{align*}
We have 
%
%
\begin{equation} \label{eq:ba}
\begin{cases}
[1,\,k] \cap \ba
  =(a_{1},\,\dots,\,a_{u_0}), \\[1.5mm]
[k+1,\,k+3] \cap \ba
  =(a_{u_0+1},\,\dots,\,a_{u_1}), \\[1.5mm]
[k+4,\,m] \cap \ba
  =(a_{u_1+1},\,\dots,\,a_{p})
\end{cases}
\end{equation}
for $0 \le u_{0} \le u_{1} \le p$. 
From the definition of 
the map $\sigma_{\bi,\,\bj}$, it is obvious that
%
%
\begin{equation} \label{eq:bq}
\begin{array}{l}
b_{q}=a_{q} \quad
 \text{for all $1 \le q \le u_{0}$ and $u_{1}+1 \le q \le p$}, \\[3mm]
[k+1,\,k+3] \cap \bb =
 (b_{u_0+1},\,\dots,\,b_{u_1}).
\end{array}
\end{equation}
Also, we have 
%
%
\begin{equation} \label{eq:bc}
\begin{cases}
[1,\,k] \cap \bc
  =(c_{1},\,\dots,\,c_{t_0}), \\[1.5mm]
[k+1,\,k+3] \cap \bc
  =(c_{t_0+1},\,\dots,\,c_{t_1}), \\[1.5mm]
[k+4,\,m] \cap \bc
  =(c_{t_1+1},\,\dots,\,c_{p})
\end{cases}
\end{equation}
for $0 \le t_{0} \le t_{1} \le p$. 
From the definition of 
the map $\sigma_{\bj,\,\bi}$, 
it is obvious that
%
%
\begin{equation} \label{eq:dq}
\begin{array}{l}
d_{q}=c_{q} \quad
 \text{for all $1 \le q \le t_{0}$ and $t_{1}+1 \le q \le p$}, \\[3mm]
[k+1,\,k+3] \cap \bd =
 (d_{t_0+1},\,\dots,\,d_{t_1}).
\end{array}
\end{equation}
We will prove that $\bb=\bc$.

Since $\bb=\sigma_{\bi,\,\bj}(\ba) \in S(xw_0,\,\bj)$, 
it follows that $\bb \succeq \bc=\min S(xw_0,\,\bj)$.
Suppose, contrary to our claim, that $\bb \succneqq \bc$. 
Then, there exists an integer $1 \le q_0 \le p$ 
such that $b_{q}=c_{q}$ for all $1 \le q < q_0$ and 
$b_{q_0} > c_{q_0}$. If $q_{0} \le \min \{u_{0},\,t_{0}\}$, 
then it would follow from \eqref{eq:bq} and \eqref{eq:dq} that 
$a_{q}=d_{q}$ for all $1 \le q < q_0$ and $a_{q_0} > d_{q_0}$. 
This means that $\ba \succneqq \bd$, contradicting the minimality of 
$\ba=\min S(xw_{0},\,\bi)$. 
Therefore, we obtain $q_{0} > \min \{u_{0},\,t_{0}\}$, 
and hence $b_{q}=c_{q}$ for all 
$1 \le q \le \min \{u_{0},\,t_{0}\}$. 
We now show that $u_{0}=t_{0}$. Indeed, 
if $u_{0} > t_{0}$, then we would have 
$b_{t_{0}+1} \le b_{u_0} \le k$ 
by \eqref{eq:ba} and \eqref{eq:bq}.
Since $c_{t_{0}+1} \ge k+1$ by \eqref{eq:bc}, 
it follows that 
$b_{t_{0}+1} < c_{t_{0}+1}$. 
However, since 
$b_{q}=c_{q}$ for all 
$1 \le q \le \min \{u_{0},\,t_{0}\}=t_{0} < q_{0}$
as shown above, this implies that 
$\bb \precneqq \bc=\min S(xw_0,\,\bj)$, a contradiction. 
Similarly, if $u_{0} < t_{0}$, then we would have 
$d_{u_{0}+1} \le d_{t_{0}} \le k$ by \eqref{eq:bc} and \eqref{eq:dq}. 
Since $a_{u_{0}+1} \ge k+1$ by \eqref{eq:ba}, 
it follows that $d_{u_{0}+1} < a_{u_{0}+1}$. 
Also, since $b_{q}=c_{q}$ for all 
$1 \le q \le \min \{u_{0},\,t_{0}\}=u_{0}$, 
it follows from \eqref{eq:bq} and \eqref{eq:dq} that 
$d_{q}=a_{q}$ for all $1 \le q \le u_{0}$. 
Combining these, we obtain 
$\bd \precneqq \ba=\min S(xw_0,\,\bi)$, a contradiction. 
Hence we conclude that $u_{0}=t_{0}$ and 
$b_{q}=c_{q}$ for all $1 \le q \le u_{0}=t_{0}$; 
in addition, we see from \eqref{eq:bq} and \eqref{eq:dq} that 
$a_{q}=d_{q}$ for all $1 \le q \le u_{0}=t_{0}$. 
Thus, we have shown that 
\begin{align*}
& [1,\,k] \cap \bb=(c_{1},\,\dots,\,c_{t_{0}})=[1,\,k] \cap \bc, \\
& [k+1,\,k+3] \cap \bb=(b_{t_0+1},\,\dots,\,b_{u_1}).
\end{align*}

Next, let us show that 
$[k+1,\,k+3] \cap \bb=[k+1,\,k+3] \cap \bc$. 
Here we give a proof in the case that 
$[k+1,\,k+3] \cap \bc=(k+1,\,k+2)$; the proofs for 
the other cases are similar. 
Since $[1,\,k] \cap \bb=[1,\,k] \cap \bc$ as shown above 
and $\bb \succneqq \bc$ by assumption, the sequence 
$[k+1,\,k+3] \cap \bb$ is not equal to 
$(k+1,\,k+2,\,k+3)$.
Also, since $\bb=\sigma_{\bi,\,\bj}(\ba)$, 
it follows from the definition of the map $\sigma_{\bi,\,\bj}$ 
that the sequence $[k+1,\,k+3] \cap \bb$ cannot be equal to 
$(k+3)$ nor to $(k+1,\,k+3)$. 
Furthermore, 
if $[k+1,\,k+3] \cap \bb=\emptyset$, then 
we would have $t_{0}=u_{1}$ and $a_{t_{0}+1}=a_{u_{1}+1} \ge k+4$. 
Since $a_{q}=d_{q}$ for all $1 \le q \le t_{0}$ as seen above, and 
since $d_{t_{0}+1}=k+2 \ (< a_{t_{0}+1})$ by the definition of 
the map $\sigma_{\bj,\,\bi}$, 
along with the assumption that $[k+1,\,k+3] \cap \bc=(k+1,\,k+2)$, 
it follows that $\ba \succneqq \bd=\sigma_{\bj,\,\bi}(\bc)$, 
which contradicts the minimality of $\ba=\min S(xw_{0},\,\bi)$. 
Similarly, if $[k+1,\,k+3] \cap \bb=(k+1)$, then 
we would have $t_{0}+1=u_{1}$ and 
$a_{t_{0}+2}=a_{u_{1}+1} \ge k+4$; 
also, since $\sigma_{\bj,\,\bi}(\bb)=\ba$ 
by Remark~\ref{rem:3-min}, it follows from 
the definition of the map $\sigma_{\bj,\,\bi}$ 
that $a_{t_{0}+1}=a_{u_{1}}=k+2$. 
Since $a_{q}=d_{q}$ for all $1 \le q \le t_{0}$ as seen above, and 
since $d_{t_{0}+1}=k+2 \ (=a_{t_{0}+1})$ and 
$d_{t_{0}+2}=k+3 \ (< a_{t_{0}+2})$ 
by the definition of the map $\sigma_{\bj,\,\bi}$ 
along with the assumption that $[k+1,\,k+3] \cap \bc=(k+1,\,k+2)$, 
it follows that $\ba \succneqq \bd=\sigma_{\bj,\,\bi}(\bc)$, contradicting 
the minimality of $\ba$. 
Now, suppose, contrary to our claim, that 
$[k+1,\,k+3] \cap \bb=(k+2)$ or $(k+2,\,k+3)$; 
note that $b_{t_{0}+1}=k+2$. 
We set $w':=
 \sj{b_{1}}\sj{b_{2}} \cdots \sj{b_{t_0}}=
 \sj{c_{1}}\sj{c_{2}} \cdots \sj{c_{t_0}}$, 
and $w:=(w')^{-1}xw_{0}$. Then, the $w$ has reduced expressions 
of the form
\begin{equation*}
w  =\sj{b_{t_0+1}}\sj{b_{t_0+2}} \cdots \sj{b_{p}}
   =\sj{c_{t_0+1}}\sj{c_{t_0+2}} \cdots \sj{c_{p}}.
\end{equation*}
If we set $z:=\sj{c_{t_1+1}}\sj{c_{t_1+2}} \cdots \sj{c_{p}}$, 
then 
\begin{equation*}
w=(\sj{c_{t_0+1}} \dots \sj{c_{t_1}})z
 =\sj{k+1}\sj{k+2}z
 =s_{j}s_{i}z
\end{equation*}
since $(c_{t_0+1},\,\dots,\,c_{t_1})=(k+1,\,k+2)$ by assumption 
and $j_{k+1}=j$, $j_{k+2}=i$. 
Also, since $b_{t_{0}+1}=k+2$ by assumption and $j_{k+2}=i$, 
we deduce by \cite[Chap.\,5, \S3, Proposition~2\,(i)]{MP} that 
$w^{-1} \cdot \alpha_{j_{k+2}}=w^{-1} \cdot \alpha_{i}$ is 
a negative root, and so 
$z^{-1} \cdot \alpha_{j}=z^{-1}s_{i}s_{j} \cdot \alpha_{i}=
 (s_{j}s_{i}z)^{-1} \cdot \alpha_{i}=
 w^{-1} \cdot \alpha_{i}$ is a negative root.
Hence it follows from the exchange condition 
that $z$ has a reduced expression of the form: 
\begin{equation*}
z=s_{j} 
  \sj{c_{t_1+1}} \cdots \sj{c_{r-1}} \sj{c_{r+1}} \cdots \sj{c_{p}}
\end{equation*}
for some $t_{1}+1 \le r \le p$. Therefore, we obtain
\begin{align*}
xw_{0} & = w'w = w's_{j}s_{i}z \\
 & = 
 \sj{c_{1}}\sj{c_{2}} \cdots \sj{c_{t_0}}
 s_{j}s_{i}s_{j}
 \sj{c_{t_1+1}} \cdots \sj{c_{r-1}} \sj{c_{r+1}} \cdots \sj{c_{p}} \\
 & = 
 \sj{c_{1}}\sj{c_{2}} \cdots \sj{c_{t_0}}
 s_{j_{k+1}}s_{j_{k+2}}s_{j_{k+3}}
 \sj{c_{t_1+1}} \cdots \sj{c_{r-1}} \sj{c_{r+1}} \cdots \sj{c_{p}}, 
\end{align*}
which is a reduced expression of $xw_{0}$, and hence 
the sequence 
\begin{equation*}
(c_{1},\,c_{2},\,\dots,\,c_{t_0},\,
 k+1,\,k+2,\,k+3,\,
 c_{t_1+1},\,\dots,\,c_{r-1},\,c_{r+1},\,\dots,\,c_{p})
\end{equation*}
is an element of $S(xw_{0},\,\bj)$. 
However, this sequence  is strictly less than 
$\bc=\min S(xw_{0},\,\bj)$ with respect to 
the lexicographic ordering, which is a contradiction. 
Hence we conclude that 
$[k+1,\,k+3] \cap \bb=(k+1,\,k+2)=[k+1,\,k+3] \cap \bc$. 
Thus, we have shown that $u_{1}=t_{1}$ and 
$b_{q}=c_{q}$ for $t_{0}+1 \le q \le u_{1}=t_{1}$, and so
%
%
\begin{equation} \label{eq:bb2}
\begin{cases}
[1,\,k] \cap \bb =
  (c_{1},\,\dots,\,c_{t_{0}}) =
  [1,\,k] \cap \bc, \\[1.5mm]
[k+1,\,k+3] \cap \bb =
  (c_{t_{0}+1},\,\dots,\,c_{t_{1}}) =
  [k+1,\,k+3] \cap \bc, \\[1.5mm]
[k+4,\,m] \cap \bb =
  (b_{t_{1}+1},\,\dots,\,b_{p}).
\end{cases}
\end{equation}

The equations \eqref{eq:bb2} 
imply that $q_{0} > t_{1}=u_{1}$. 
In this case, since $\bc=\min S(xw_{0},\,\bj) \precneqq \bb$, 
it follows from the definition of the map $\sigma_{\bj,\,\bi}$ that 
$\sigma_{\bj,\,\bi}(\bc) \precneqq \sigma_{\bj,\,\bi}(\bb)=\ba$, 
contrary to the minimality of $\ba$. 
Thus we have proved that $\bb=\bc$, 
thereby completing the proof of 
Proposition~\ref{prop:bmove}. 
\end{proof}

\begin{proof}[Proof of Proposition~\ref{prop:1sk}.]
By Remark~\ref{rem:braid}, we may assume that 
$\bi,\,\bj \in R(w_{0})$ are related by a $2$-move or 
a $3$-move. Furthermore, in view of \cite[Theorem~7.1]{Kam1}, 
it suffices to show that
$\mu_{\wj{l}}^{x,\,\bi}=\xij{l}$ for every $0 \le l \le m$.
We give a proof in the case that 
$\bi,\,\bj \in R(w_{0})$ are related 
by a $3$-move as in \eqref{eq:3move}, i.e., 
\begin{equation*}
\begin{array}{l}
\bi=(i_{1},\,\dots,\,i_{k},\,i,\,j,\,i,\,i_{k+4},\,\dots,\,i_{m}), \\[1.5mm]
\bj=(i_{1},\,\dots,\,i_{k},\,j,\,i,\,j,\,i_{k+4},\,\dots,\,i_{m})
\end{array}
\end{equation*}
for some indices $i,\,j \in I$ with $a_{ij}=a_{ji}=-1$ 
and an integer $0 \le k \le m-3$; 
the proof for the case of $2$-move is 
similar (or, even simpler). 

Assume first that $l \ne k+1,\,k+2$. 
Then, it is obvious that $\wi{l}=\wj{l}$. 
Also, because $\min S(xw_{0},\,\bj)=
\sigma_{\bi,\,\bj}(\min S(xw_{0},\,\bi))$ 
by Proposition~\ref{prop:bmove}, 
we deduce from the definition (see \S\ref{subsec:length}) 
that $\vi{l}=\vj{l}$. Therefore, 
it follows immediately from Lemma~\ref{lem:prod} that 
\begin{equation*}
\xii{l}=\wi{l}\vi{l}w_{0}^{-1} \cdot \lambda
  =\wj{l}\vj{l}w_{0}^{-1} \cdot \lambda=\xij{l}. 
\end{equation*}
Hence we obtain $\mu_{\wj{l}}^{x,\,\bi}=
\mu_{\wi{l}}^{x,\,\bi}=\xii{l}=\xij{l}$, as desired. 
Thus, it remains to show that 
%
%
\begin{equation}\label{eq:remain}
\mu_{\wj{k+1}}^{x,\,\bi}=\xij{k+1} \quad \text{and} \quad
\mu_{\wj{k+2}}^{x,\,\bi}=\xij{k+2}.
\end{equation} 
Recall that the integers 
$\Ni{l}=\Ni{l}(\mu_{\bullet}^{x,\,\bi}) \in \BZ_{\ge 0}$ and 
$\Nj{l}=\Nj{l}(\mu_{\bullet}^{x,\,\bi}) \in \BZ_{\ge 0}$ for 
$l=k+1,\,k+2,\,k+3$ are defined via the length formula:
\begin{equation*}
\mu_{\wi{l}}^{x,\,\bi}-\mu_{\wi{l-1}}^{x,\,\bi}=
  \Ni{l} (\wi{l-1} \cdot h_{i_{l}})
\quad \text{and} \quad
\mu_{\wj{l}}^{x,\,\bi}-\mu_{\wj{l-1}}^{x,\,\bi}=
  \Nj{l} (\wj{l-1} \cdot h_{j_{l}}),
\end{equation*}
respectively. Also, we know from 
(the proof of) Lemma~\ref{lem:length} 
that for $l=k+1,\,k+2,\,k+3$, 
there exists an integer $\NNj{l} \in \BZ_{\ge 0}$ such that 
\begin{equation*}
\xij{l}-\xij{l-1}=\NNj{l} (\wj{l-1} \cdot h_{j_{l}});
\end{equation*}
recall that the integer $\NNj{l} \in \BZ_{\ge 0}$ is given by:
%
%
\begin{equation} \label{eq:NNj}
\NNj{l} = 
  \begin{cases}
   0 & \text{if $l$ appears in $\min S(xw_{0},\,\bj)$}, \\[1.5mm]
   \pair{\btj{l}}{\xij{l}} & \text{otherwise}.
  \end{cases}
\end{equation}
Since $\mu_{\wj{k+3}}^{x,\,\bi}=\xij{k+3}$ as shown above, 
equations \eqref{eq:remain} will follow if we can show that 
$\Nj{k+3}=\NNj{k+3}$ and $\Nj{k+2}=\NNj{k+2}$. 
We will show that $\Nj{l}=\NNj{l}$ for $l=k+1,\,k+2,\,k+3$. 

We know that the sequence $[k+1,\,k+3] \cap \min S(xw_{0},\,\bi)$
is equal to one of the following: 
(i) $\emptyset$, (ii) $(k+1)$, (iii) $(k+2)$, (iv) $(k+1,\,k+2)$, 
(v) $(k+2,\,k+3)$, (vi) $(k+1,\,k+2,\,k+3)$. 
Also, since $\mu_{\wi{l}}^{x,\,\bi}=\xii{l}$ for all $0 \le l \le m$
and hence $\mu_{\wi{l}}^{x,\,\bi}-\mu_{\wi{l-1}}^{x,\,\bi}=
\xii{l}-\xii{l-1}$ for all $1 \le l \le m$, 
it follows from \eqref{eq:xii-dif} that 
%
%
\begin{equation} \label{eq:k1}
\Ni{k+1} = 
  \begin{cases}
   0 & \text{if $k+1$ appears in $\min S(xw_{0},\,\bi)$}, \\[1.5mm]
   \pair{\bti{k+1}}{\xii{k+1}} & \text{otherwise},
  \end{cases}
\end{equation}
%
%
\begin{equation} \label{eq:k2}
\Ni{k+2} = 
  \begin{cases}
   0 & \text{if $k+2$ appears in $\min S(xw_{0},\,\bi)$}, \\[1.5mm]
   \pair{\bti{k+2}}{\xii{k+2}} & \text{otherwise}, 
  \end{cases}
\end{equation}
%
%
\begin{equation} \label{eq:k3}
\Ni{k+3} = 
  \begin{cases}
   0 & \text{if $k+3$ appears in $\min S(xw_{0},\,\bi)$}, \\[1.5mm]
   \pair{\bti{k+3}}{\xii{k+3}} & \text{otherwise}. 
  \end{cases}
\end{equation}
For simplicity of notation, we set 
$\gamma_{i}:=\wi{k} \cdot \alpha_i$ and 
$\gamma_{j}:=\wi{k} \cdot \alpha_j$. Then we calculate: 
\begin{align*}
& \bti{k+1}=
   \wi{k} \cdot \alpha_{i_{k+1}}=
   \wi{k} \cdot \alpha_{i}=
   \gamma_{i}, \\
& \bti{k+2}=
   \wi{k+1} \cdot \alpha_{i_{k+2}}=
   \wi{k}\si{k+1} \cdot \alpha_{i_{k+2}}=
   \wi{k}s_{i} \cdot \alpha_{j}=
   \wi{k} \cdot (\alpha_{i}+\alpha_{j})=\gamma_{i}+\gamma_{j}, \\
& \bti{k+3}=
   \wi{k+2} \cdot \alpha_{i_{k+3}}=
   \wi{k}\si{k+1}\si{k+2} \cdot \alpha_{i_{k+3}}=
   \wi{k}s_{i}s_{j} \cdot \alpha_{i}=
   \wi{k} \cdot \alpha_{j}=\gamma_{j}.
\end{align*}
Also, we set
$A:=\pair{\gamma_{i}}{\xii{k+3}}$ and 
$B:=\pair{\gamma_{j}}{\xii{k+3}}$.
Then, we obtain
%
%
\begin{equation} \label{eq:lg1}
(\Ni{k+1},\,\Ni{k+2},\,\Ni{k+3}) =
\begin{cases}
(B,\,A,\,B) & \text{in case (i)}, \\[1.5mm]
(0,\,A,\,B) & \text{in case (ii)}, \\[1.5mm]
(A+B,\,0,\,B) & \text{in case (iii)}, \\[1.5mm]
(0,\,0,\,B)  & \text{in case (iv)}, \\[1.5mm]
(A,\,0,\,0) & \text{in case (v)}, \\[1.5mm]
(0,\,0,\,0)  & \text{in case (vi)}.
\end{cases}
\end{equation}
As an example, let us give a proof in case (iii).
Since $k+3$ does not appear in $\min S(xw_{0},\,\bi)$, we have 
$\xii{k+2}=s_{\bti{k+3}} \cdot \xii{k+3}$ 
by definition \eqref{eq:xii}, and 
$\Ni{k+3} = \pair{\bti{k+3}}{\xii{k+3}} = 
 \pair{\gamma_{j}}{\xii{k+3}}=B$ by \eqref{eq:k3}. 
Since $k+2$ appears in $\min S(xw_{0},\,\bi)$, we have 
$\xii{k+1}=\xii{k+2}=s_{\bti{k+3}} \cdot \xii{k+3}$ 
by definition \eqref{eq:xii}, 
and $\Ni{k+2}=0$ by \eqref{eq:k2}. 
Since $k+1$ does not appear in $\min S(xw_{0},\,\bi)$, 
we conclude by \eqref{eq:k1} that 
\begin{align*}
\Ni{k+1} & = \pair{\bti{k+1}}{\xii{k+1}} = 
 \pair{\bti{k+1}}{s_{\bti{k+3}} \cdot \xii{k+3}} = 
 \pair{\gamma_{i}}{s_{\gamma_{j}} \cdot \xii{k+3}} \\ 
 & =  \pair{\gamma_{i}+\gamma_{j}}{\xii{k+3}}=A+B. 
\end{align*}
The proofs for the other cases are similar. 
Hence it follows from \eqref{eq:3bm} that
%
%
\begin{equation} \label{eq:lg2}
(\Nj{k+1},\,\Nj{k+2},\,\Nj{k+3}) =
\begin{cases}
(A,\,B,\,A) & \text{in case (i)}, \\[1.5mm]
(A+B,\,0,\,A) & \text{in case (ii)}, \\[1.5mm]
(0,\,B,\,A) & \text{in case (iii)}, \\[1.5mm]
(B,\,0,\,0)  & \text{in case (iv)}, \\[1.5mm]
(0,\,0,\,A) & \text{in case (v)}, \\[1.5mm]
(0,\,0,\,0)  & \text{in case (vi)}.
\end{cases}
\end{equation}

Next, we compute $(\NNj{k+1},\,\NNj{k+2},\,\NNj{k+3})$. 
From Proposition~\ref{prop:bmove}, we deduce by the definition of 
the map $\sigma_{\bi,\,\bj}$ that 
\begin{align*}
[k+1,\,k+3] \cap \min S(xw_{0},\,\bj) = 
\begin{cases}
\emptyset & \text{in case (i)}, \\[1.5mm]
(k+2) & \text{in case (ii)}, \\[1.5mm]
(k+1) & \text{in case (iii)}, \\[1.5mm]
(k+2,\,k+3)  & \text{in case (iv)}, \\[1.5mm]
(k+1,\,k+2) & \text{in case (v)}, \\[1.5mm]
(k+1,\,k+2,\,k+3) & \text{in case (vi)}.
\end{cases}
\end{align*}
Also, since $\wi{k}=\wj{k}$ and 
$\xii{k+3}=\xij{k+3}$ as seen above, 
we have $\gamma_{i}=\wj{k} \cdot \alpha_i$, 
$\gamma_{j}=\wj{k} \cdot \alpha_j$, 
and also $A=\pair{\gamma_{i}}{\xij{k+3}}$, 
$B=\pair{\gamma_{j}}{\xij{k+3}}$.
Using these, by an argument similar to 
the one for \eqref{eq:lg1}, we obtain
%
%
\begin{equation} \label{eq:lg3}
(\NNj{k+1},\,\NNj{k+2},\,\NNj{k+3}) =
\begin{cases}
(A,\,B,\,A) & \text{in case (i)}, \\[1.5mm]
(A+B,\,0,\,A) & \text{in case (ii)}, \\[1.5mm]
(0,\,B,\,A) & \text{in case (iii)}, \\[1.5mm]
(B,\,0,\,0)  & \text{in case (iv)}, \\[1.5mm]
(0,\,0,\,A) & \text{in case (v)}, \\[1.5mm]
(0,\,0,\,0)  & \text{in case (vi)}.
\end{cases}
\end{equation}
As an example, let us give a proof in case (iii); 
recall that $[k+1,\,k+3] \cap \min S(xw_{0},\,\bj)=(k+1)$. 
Since $k+3$ does not appear in $\min S(xw_{0},\,\bj)$, we have 
$\xij{k+2}=s_{\btj{k+3}} \cdot \xij{k+3}$ 
by definition \eqref{eq:xii}, and 
$\NNj{k+3} = \pair{\btj{k+3}}{\xij{k+3}}$ by \eqref{eq:NNj}.
Also, we calculate: 
\begin{equation*}
\btj{k+3}=
   \wj{k+2} \cdot \alpha_{j_{k+3}}=
   \wj{k}\sj{k+1}\sj{k+2} \cdot \alpha_{j_{k+3}}=
   \wj{k}s_{j}s_{i} \cdot \alpha_{j}=
   \wj{k} \cdot \alpha_{i}=\gamma_{i}.
\end{equation*}
Hence we get $\NNj{k+3}=\pair{\gamma_{i}}{\xij{k+3}}=A$. 
Similarly, since $k+2$ does not appear in $\min S(xw_{0},\,\bj)$, 
we have $\NNj{k+2} = \pair{\btj{k+2}}{\xij{k+2}}$ and 
$\btj{k+2}=\gamma_{i}+\gamma_{j}$. Therefore, we conclude that 
\begin{align*}
\NNj{k+2} & = \pair{\btj{k+2}}{\xij{k+2}} = 
\pair{\btj{k+2}}{s_{\btj{k+3}} \cdot \xij{k+3}} \\
& = \pair{\gamma_{i}+\gamma_{j}}{s_{\gamma_{i}} \cdot \xij{k+3}} = 
\pair{\gamma_{j}}{\xij{k+3}} = B. 
\end{align*}
Since $k+1$ appears in $\min S(xw_{0},\,\bj)$, we have 
$\xij{k}=\xij{k+1}$ by definition \eqref{eq:xii}, 
and hence $\NNj{k+1}=0$. The proofs for the other cases 
are similar. 

Combining \eqref{eq:lg2} and \eqref{eq:lg3}, 
we obtain 
$(\Nj{k+1},\,\Nj{k+2},\,\Nj{k+3})=
 (\NNj{k+1},\,\NNj{k+2},\,\NNj{k+3})$, as desired. 
This completes the proof of 
Proposition~\ref{prop:1sk}. 
\end{proof}

%
\subsection{Proof of Theorem~\ref{thm:ext}.}
\label{subsec:prf-thmext}

Keep the notation and assumptions of \S\ref{subsec:extremal}.
Let $\bi \in R(w_{0})$, and write the element 
$\min S(xw_{0},\,\bi)$ as: 
\begin{equation*}
\min S(xw_{0},\,\bi)=(a_{1},\,a_{2},\,\dots,\,a_{p}).
\end{equation*}
Then, we have 
\begin{align*}
\mu_{e}^{x}
 & = \mu_{\wi{0}}^{x}=\xi^{\bi}_{0}=
     \wi{0}\vi{0}w_{0}^{-1} \cdot \lambda \quad \text{by \eqref{eq:xi}} \\
 & = \wi{0}\,(\si{a_1}\si{a_2} \cdots \si{a_{p}})\,
     w_{0}^{-1} \cdot \lambda
   = e(xw_{0})w_{0}^{-1} \cdot \lambda
   =x \cdot \lambda. 
\end{align*}
Therefore, it follows that 
$\wt(P(\mu_{\bullet}^{x}))=\mu_{e}^{x}=x \cdot \lambda$. 
Because $P_{x \cdot \lambda}$ is the unique element 
of $\mv(\lambda)$ whose weight is 
$x \cdot \lambda$, we conclude that 
$P(\mu_{\bullet}^{x})=P_{x \cdot \lambda}$. 
Thus, we have proved part (1) of Theorem~\ref{thm:ext}. 

Let us prove part (2). We first show that 
$P_{x \cdot \lambda}=P(\mu_{\bullet}^{x}) 
 \subset \Conv(W_{\le x} \cdot \lambda)$. 
By \eqref{eq:inc}, it suffices to show that 
$\xii{l} \in W_{\le x} \cdot \lambda$ 
for all $\bi \in R(w_{0})$ and $0 \le l \le m$.
We take and fix an arbitrary $\bi \in R(w_{0})$, and 
show the assertion by induction on $l$. 
If $l=0$, then the assertion is obvious since 
$\xii{0}=x \cdot \lambda$ 
as shown above. 
Assume now that $l > 0$. 
If $l$ appears in $\min S(xw_{0},\,\bi)$, then 
$\xii{l}=\xii{l-1}$ by definition \eqref{eq:xii}, and hence 
$\xii{l} \in W_{\le x} \cdot \lambda$ 
by our induction hypothesis. 
So, we may assume that 
$l$ does not appear in $\min S(xw_{0},\,\bi)$; 
note that $\xii{l-1}=s_{\bti{l}} \cdot \xii{l}$ 
by definition \eqref{eq:xii}. In this case, we know 
from the proof of Lemma~\ref{lem:length} that 
$\pair{\bti{l}}{\xii{l}} \ge 0$. 
When $\pair{\bti{l}}{\xii{l}}=0$, we have 
$\xii{l}=\xii{l-1}$ by definition (see also \eqref{eq:xii-dif}), 
and hence $\xii{l} \in W_{\le x} \cdot \lambda$ 
by our induction hypothesis. 
Now, it remains to consider the case
$\pair{\bti{l}}{\xii{l}} > 0$. 
By our induction hypothesis, there exists 
$z \in W_{\le x}$ such that 
$\xi^{\bi}_{l-1}=z \cdot \lambda$. 
Then, we calculate: 
\begin{equation*}
\pair{z^{-1} \cdot \bti{l}}{\lambda} = 
\pair{\bti{l}}{z \cdot \lambda} = 
\pair{\bti{l}}{\xi^{\bi}_{l-1}}=
\pair{\bti{l}}{s_{\bti{l}} \cdot \xii{l}}=
-\pair{\bti{l}}{\xii{l}} < 0. 
\end{equation*}
Since $\lambda \in X_{\ast}(T) \subset \Fh_{\BR}$ 
is a dominant coweight, 
this implies that $z^{-1} \cdot \bti{l}$ 
is a negative root. Therefore, we deduce from 
\cite[Chap.\,5, \S3, Proposition~2\,(i)]{MP} that 
$s_{\bti{l}}z < z \le x$, and hence that 
$\xii{l}=s_{\bti{l}} \cdot \xi^{\bi}_{l-1}=
 s_{\bti{l}}z \cdot \lambda \in 
W_{\le x} \cdot \lambda$. 
Thus, we have shown that 
$P_{x \cdot \lambda}=P(\mu_{\bullet}^{x}) 
 \subset \Conv(W_{\le x} \cdot \lambda)$ in all cases. 

Next, we show the reverse inclusion: 
$P_{x \cdot \lambda}=P(\mu_{\bullet}^{x}) 
 \supset \Conv(W_{\le x} \cdot \lambda)$. 
By \eqref{eq:inc}, it suffices to show that 
for each $z \in W$ with $z \le x$, 
there exist some $\bi \in R(w_{0})$ and 
an integer $0 \le l \le m$ 
such that $\xii{l}=z \cdot \lambda$. 
Let $z \in W$ be such that $z \le x$; 
note that $zw_{0} \ge xw_{0}$ 
(see, for example, \cite[Proposition~2.3.4\,(i)]{BB}).
Denote by $l$ the length $\ell(zw_{0})$ of $zw_{0}$, 
and take $\bi \in R(w_{0})$ such that $\wi{l}=zw_{0}$. 
We define a subset $S(xw_{0},\,\bi)_{\le l}$ of 
$S(xw_{0},\,\bi)$ by: 
\begin{equation*}
S(xw_{0},\,\bi)_{\le l}=
 \bigl\{
   (a_{1},\,a_{2},\,\dots,\,a_{p}) \in S(xw_{0},\,\bi) \mid 
   a_{p} \le l
 \bigr\} \subset S(xw_{0},\,\bi);
\end{equation*}
since $\wi{l}=zw_{0} \ge xw_{0}$, it follows from 
the ``subword condition'' (see, for example, 
\cite[Chap.\,5, \S4, Proposition~2]{MP}) 
that $S(xw_{0},\,\bi)_{\le l}$ is not empty.

\begin{claim*}
The minimum element $\min S(xw_{0},\,\bi)_{\le l}$ 
of $S(xw_{0},\,\bi)_{\le l}$ 
with respect to the lexicographic ordering $\succeq$ 
is identical to the minimum element $\min S(xw_{0},\,\bi)$ 
of $S(xw_{0},\,\bi)$. 
In particular, we have 
$[l+1,\,m] \cap \min S(xw_{0},\,\bi)=\emptyset$.
\end{claim*}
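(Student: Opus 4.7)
The plan is to show that $\ba^* := \min S(xw_{0},\bi)$ in fact already lies in the subset $S(xw_{0},\bi)_{\le l}$; once this is established, the equality of the two minima is automatic, and the ``in particular'' clause is immediate. I begin by recording that $S(xw_{0},\bi)_{\le l}$ is non-empty: the hypothesis $z \le x$ gives $zw_{0} \ge xw_{0}$ (a standard fact about Bruhat order under right multiplication by $w_{0}$), so $\wi{l} = zw_{0} \ge xw_{0}$, and the subword characterization of Bruhat order applied to the reduced expression $\wi{l} = \si{1}\si{2}\cdots\si{l}$ produces at least one reduced subword of $xw_{0}$ with all indices in $[1,l]$.

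The main step is a proof by contradiction. Suppose $\ba^* = (a_1^*,\ldots,a_p^*)$ has $a_p^* > l$, and let $\bb^* = (b_1^*,\ldots,b_p^*) := \min S(xw_{0},\bi)_{\le l}$. Then $\ba^* \prec \bb^*$ strictly, and at the first index $q$ where they disagree one has $a_q^* < b_q^* \le l$. Setting $u := \si{a_1^*}\cdots\si{a_{q-1}^*}$ and $y := u^{-1}xw_{0}$, both $\si{a_q^*}\cdots\si{a_p^*}$ and $\si{b_q^*}\cdots\si{b_p^*}$ are reduced expressions of $y$. Because the latter uses only indices in $\{a_q^*+1,\ldots,l\}$, the subword property yields $y \le \si{a_q^*+1}\cdots\si{l}$ in Bruhat order; the right-hand side is a reduced contiguous product, equal to $\wi{a_q^*}^{-1}\wi{l}$. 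From the former expression, $\si{a_q^*}\cdot y$ has length $\ell(y)-1$, hence $\si{a_q^*}y \le y \le \si{a_q^*+1}\cdots\si{l}$, and a second application of the subword property yields indices $a_q^* < c_{q+1} < \cdots < c_p \le l$ with $\si{a_q^*}y = \si{c_{q+1}}\cdots\si{c_p}$. The spliced sequence $(a_1^*,\ldots,a_{q-1}^*,a_q^*,c_{q+1},\ldots,c_p)$ then lies in $S(xw_{0},\bi)_{\le l}$ and is strictly lex-smaller than $\bb^*$ at position $q$, contradicting the minimality of $\bb^*$.

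I do not anticipate a serious obstacle: the argument amounts to a double use of the subword characterization of Bruhat order, combined with the observation that the ``offending'' first letter $\si{a_q^*}$ coming from $\ba^*$ can be absorbed into a subword living inside $[1,l]$ precisely because its removal strictly decreases the length of $y$. The one place where I would take care is the bookkeeping of indices when splicing $(a_1^*,\ldots,a_q^*)$ with $(c_{q+1},\ldots,c_p)$: one must confirm that the resulting sequence is strictly increasing and of length exactly $p$, so as to be a legitimate element of $S(xw_{0},\bi)_{\le l}$, and that $\si{a_1^*}\cdots\si{a_{q-1}^*}\si{a_q^*}\si{c_{q+1}}\cdots\si{c_p}$ is still reduced, which follows from $\ell(y)=\ell(\si{a_q^*}y)+1$ and $\ell(u)=q-1$.
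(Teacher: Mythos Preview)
Your argument is correct and follows the same overall architecture as the paper's: assume the two minima differ, locate the first position $q$ of disagreement, and manufacture an element of $S(xw_{0},\bi)_{\le l}$ that is lex-smaller than $\min S(xw_{0},\bi)_{\le l}$, yielding a contradiction. The mechanism you use in the construction step is slightly different. The paper works with the sequence $\min S(xw_{0},\bi)_{\le l}=(a_1,\dots,a_p)$ and applies the exchange condition once: since $w=\si{a_q}\cdots\si{a_p}=\si{b_q}\cdots\si{b_p}$ with the $b$-expression reduced, $w^{-1}\cdot\alpha_{i_{b_q}}<0$, so $w=\si{b_q}\si{a_q}\cdots\widehat{\si{a_r}}\cdots\si{a_p}$ for some $r$, producing the lex-smaller sequence $(a_1,\dots,a_{q-1},b_q,a_q,\dots,\widehat{a_r},\dots,a_p)$ directly. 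You instead start from $\min S(xw_{0},\bi)=(a_1^*,\dots,a_p^*)$, keep its initial segment through position $q$, and invoke the subword characterization of Bruhat order twice to locate a fresh tail $(c_{q+1},\dots,c_p)$ inside $[a_q^*+1,l]$. Both are short and valid; the paper's exchange-condition step is marginally more explicit about which index gets replaced, while your route avoids tracking the deleted index at the cost of one extra appeal to the subword property.
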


\noindent 
{\it Proof of Claim.} Let us write the elements 
$\min S(xw_{0},\,\bi)_{\le l}$ and $\min S(xw_{0},\,\bi)$ as
\begin{equation*}
\min S(xw_{0},\,\bi)_{\le l} = (a_{1},\,a_{2},\,\dots,\,a_{p})
\quad \text{and} \quad
\min S(xw_{0},\,\bi) = (b_{1},\,b_{2},\,\dots,\,b_{p}), 
\end{equation*}
respectively. Suppose, contrary to our claim, 
that $\min S(xw_{0},\,\bi)_{\le l} \succneqq \min S(xw_{0},\,\bi)$.
Then, there exists some integer $1 \le q \le p$ such that 
$a_{1}=b_{1},\,a_{2}=b_{2},\,\dots,\,a_{q-1}=b_{q-1}$ and 
$a_{q} > b_{q}$; note that if $q \ge 2$, then
$a_{q} > b_{q} > b_{q-1}=a_{q-1}$.
We set $w:=\si{a_{q}} \cdots \si{a_{p}}$. 
Since
$\si{a_{1}} \cdots \si{a_{q-1}}=
 \si{b_{1}} \cdots \si{b_{q-1}}$ by the definition of $q$, 
and since 
\begin{equation*}
\si{a_{1}} \cdots \si{a_{q-1}}\si{a_{q}} \cdots \si{a_{p}}=
 \si{b_{1}} \cdots \si{b_{q-1}}\si{b_{q}} \cdots \si{b_{p}}=xw_{0},
\end{equation*}
we get $w=\si{b_{q}} \cdots \si{b_{p}}$.
It follows from \cite[Chap.\,5, \S3, Proposition~2\,(i)]{MP} that 
$w^{-1} \cdot \alpha_{i_{b_{q}}} = 
(\si{b_{q}} \cdots \si{b_{p}})^{-1} \cdot \alpha_{i_{b_{q}}}$
is a negative root. Therefore, we see 
by the exchange condition 
that $w$ has a reduced expression of the form 
$\si{b_{q}}\si{a_{q}} \cdots \si{a_{r-1}}\si{a_{r+1}} \cdots \si{a_{p}}$
for some $q \le r \le p$, and hence that $xw_{0}$ has 
a reduced expression of the form
\begin{equation*}
xw_{0}=\si{a_{1}} \cdots \si{a_{q-1}}
\si{b_{q}}\si{a_{q}} \cdots \si{a_{r-1}}\si{a_{r+1}} \cdots \si{a_{p}}.
\end{equation*}
It follows that the sequence
\begin{equation*}
(a_{1},\,\dots,\,a_{q-1},\,
 b_{q},\,a_{q},\,\dots,\,a_{r-1},\,a_{r+1},\,\dots,\,a_{p})
\end{equation*}
is an element of $S(xw_{0},\,\bi)_{\le l}$, and is strictly less than 
$\ba=\min S(xw_{0},\,\bi)_{\le l}$ with respect to 
the lexicographic ordering, which is a contradiction. 
Thus, we have shown that $\min S(xw_{0},\,\bi)_{\le l}=\min S(xw_{0},\,\bi)$, 
as desired. This proves the claim. \bqed

\vsp

By \eqref{eq:xi}, we have 
$\xii{l}=\wi{l}\vi{l}w_{0}^{-1} \cdot \lambda$. 
Since $[l+1,\,m] \cap \min S(xw_{0},\,\bi)=\emptyset$ 
by the claim above, it follows from the definition that 
$\vi{l}=e$. Hence we conclude that
\begin{equation*}
\xii{l}=
  \wi{l}\vi{l}w_{0}^{-1} \cdot \lambda = 
  \wi{l}w_{0}^{-1} \cdot \lambda = 
  (zw_{0})w_{0}^{-1} \cdot \lambda = z \cdot \lambda, 
\end{equation*}
since $\wi{l}=zw_{0}$ for the $\bi \in R(w_{0})$. 
Thus, we have shown that 
$P_{x \cdot \lambda}=P(\mu_{\bullet}^{x}) 
 \supset \Conv(W_{\le x} \cdot \lambda)$, 
thereby completing the proof of part (2) 
of Theorem~\ref{thm:ext}.

%
\subsection{Relation between extremal MV polytopes and 
opposite Demazure crystals.}
\label{subsec:p-opdem}
By combining Theorems~\ref{thm:opdem} and \ref{thm:ext}, 
we obtain a polytopal condition for an MV polytope 
$P \in \mv(\lambda)$ to lie in the opposite Demazure crystal 
$\mv^{x}(\lambda)$ for $x \in W$.
%
%
\begin{thm} \label{thm:p-opdem}
Let $x \in W$. 
An element $P \in \mv(\lambda)$ is contained in 
$\mv^{x}(\lambda)=\Psi_{\lambda}(\CB^{x}(\lambda))$ 
if and only if the MV polytope $P$ contains 
{\rm(}as a set\,{\rm)} 
the extremal MV polytope $P_{x \cdot \lambda}=\Conv(W_{\le x} \cdot \lambda)$. 
\end{thm}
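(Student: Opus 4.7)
My plan is to combine Theorem~\ref{thm:opdem} with Theorem~\ref{thm:ext}\,(2), together with the standard monotonicity of opposite Demazure crystals: for $z \le x$ in the Bruhat order one has $V^{x}(\lambda) \subset V^{z}(\lambda)$ (because $u_{x \cdot \lambda}$ lies in $U_{q}^{-}(\Fg^{\vee}) \cdot u_{z \cdot \lambda}$, by iterating the $\mathfrak{sl}_{2}$-lowering along a saturated Bruhat chain from $z$ to $x$), and hence $\mv^{x}(\lambda) \subset \mv^{z}(\lambda)$ under the isomorphism $\Psi_{\lambda}$.

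For sufficiency, assume $P \supset P_{x \cdot \lambda}$, and verify the condition (Op. Dem.) of Theorem~\ref{thm:opdem} by a sandwich argument. Write $\mu_{\bullet}$ for the GGMS datum of $P$. Fix $\bi=(i_{1},\dots,i_{m}) \in R(w_{0})$ with $\wi{p}=xw_{0}$ and any $p \le l \le m$. Since $\wi{l}=\wi{p}\si{p+1}\cdots\si{l}$ is a reduced product extending $\wi{p}=xw_{0}$, we have $\wi{l} \ge xw_{0}$ in the strong Bruhat order, hence $\wi{l} w_{0} \le x$, so $\wi{l} w_{0} \cdot \lambda \in W_{\le x} \cdot \lambda \subset P_{x \cdot \lambda} \subset P$. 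Applying the defining inequality \eqref{eq:poly} for $P$ at index $\wi{l}$ to this point gives $\wi{l} w_{0} \cdot \lambda - \mu_{\wi{l}} \in \wi{l} \cdot \sum_{j \in I} \BR_{\ge 0} h_{j}$. Conversely, by Remark~\ref{rem:int}\,(2) the vertex of $P_{w_{0} \cdot \lambda}$ at $\wi{l}$ is $\wi{l} w_{0} \cdot \lambda$; since $\mu_{\wi{l}} \in P \subset \Conv(W \cdot \lambda)=P_{w_{0} \cdot \lambda}$, the inequality \eqref{eq:poly} defining $P_{w_{0} \cdot \lambda}$ at $\wi{l}$ yields $\mu_{\wi{l}} - \wi{l} w_{0} \cdot \lambda \in \wi{l} \cdot \sum_{j \in I} \BR_{\ge 0} h_{j}$. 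These two opposite containments force $\mu_{\wi{l}} = \wi{l} w_{0} \cdot \lambda$, and Theorem~\ref{thm:opdem} then gives $P \in \mv^{x}(\lambda)$.

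For necessity, assume $P \in \mv^{x}(\lambda)$. By Theorem~\ref{thm:ext}\,(2) and convexity of $P$, it suffices to show $z \cdot \lambda \in P$ for every $z \le x$. Fix such $z$; by the monotonicity above, $P \in \mv^{z}(\lambda)$. Choose $\bi \in R(w_{0})$ whose initial length-$p_{z}$ segment is a reduced word for $zw_{0}$ (where $p_{z}:=\ell(zw_{0})$); such $\bi$ exists since $zw_{0} \le w_{0}$. Then Theorem~\ref{thm:opdem} applied to $P$ with $z$ in place of $x$, to this $\bi$ at $l=p_{z}$, produces the vertex $\mu_{\wi{p_{z}}} = \wi{p_{z}} \cdot w_{0} \cdot \lambda = z \cdot \lambda$ of $P$, which therefore lies in $P$.

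The delicate step is the monotonicity $\mv^{x}(\lambda) \subset \mv^{z}(\lambda)$ for $z \le x$, which is not formally proved in the present paper; I would cite it as a standard fact on opposite Demazure modules. Alternatively, one can bypass it by establishing the polytopal monotonicity $f_{j} P \supset P$ on $\mv(\lambda)$ (provable from the explicit formula for Kashiwara operators on GGMS data in \cite{Kam2}) and iterating starting from $P_{x \cdot \lambda}$, using that $\CB^{x}(\lambda)$ is generated from $u_{x \cdot \lambda}$ by lowering operators.
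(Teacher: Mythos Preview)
Your main argument is correct and follows the paper's strategy closely. For the ``only if'' part you argue exactly as the paper does: use the Bruhat-monotonicity $\mv^{x}(\lambda)\subset\mv^{z}(\lambda)$ for $z\le x$ (the paper cites \cite[Proposition~3.2.4]{Kas} together with the remark after \cite[Proposition~4.3]{Kas}, so this is not a gap), and then read off $\mu_{\wi{p_{z}}}=z\cdot\lambda$ from Theorem~\ref{thm:opdem}. For the ``if'' part your sandwich argument is a clean variant of the paper's: the paper passes to BZ data and uses Lemma~\ref{lem:bz} to squeeze $M_{\gamma}$ between the BZ data of $P_{x\cdot\lambda}$ and $P_{w_{0}\cdot\lambda}$, then deduces $N_{l}=0$; you instead squeeze $\mu_{\wi{l}}$ directly via the defining cone inequalities \eqref{eq:poly}, which avoids introducing BZ data. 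Both are the same ``sandwich between $P_{x\cdot\lambda}\subset P\subset P_{w_{0}\cdot\lambda}$'' idea, and yours is arguably more direct.

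One genuine error: your proposed \emph{alternative} route to necessity --- proving $f_{j}P\supset P$ and then ``iterating starting from $P_{x\cdot\lambda}$, using that $\CB^{x}(\lambda)$ is generated from $u_{x\cdot\lambda}$ by lowering operators'' --- does not work. The inclusion $f_{j}P\supset P$ is true (see Remark~\ref{rem:succ1}), but the claim that every element of $\CB^{x}(\lambda)$ is obtained from $u_{x\cdot\lambda}$ by successive $f_{j}$'s is false in general; Example~\ref{ex:lowering} in the paper gives an explicit $A_{2}$ counterexample. So drop that alternative and rely on the Kashiwara monotonicity citation, as the paper does.
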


First, we prove the ``only if'' part of the theorem. 
Before giving the proof, we make the following remarks. 

%
\begin{rem} \label{rem:succ1}
We know 
(see \cite[\S4.1]{BaG} and \cite[Theorem~4.7]{Kam2}) that 
for $P \in \mv(\lambda)$ and $j \in I$ 
with $f_{j}P \ne \bzero$, 
there holds $f_{j}P \supset P$. 
Therefore, 
if an MV polytope $P \in \mv^{x}(\lambda)$ were
obtained from the extremal MV polytope 
$P_{x \cdot \lambda}$
by successive application of 
the lowering Kashiwara operators $f_{j}$, $j \in I$, 
then it would follow immediately that 
the polytope $P$ contains $P_{x \cdot \lambda}$. 
However, in general, not all 
MV polytopes in $\mv^{x}(\lambda)$ can be 
obtained from 
$P_{x \cdot \lambda}$ by successively applying 
$f_{j}$, $j \in I$, as Example~\ref{ex:lowering} below shows.
\end{rem}
%
%
\begin{ex} \label{ex:lowering}
Let $\Fg$ be the simple Lie algebra of type $A_{2}$. 
Set $\lambda:=h_{1}+h_{2} \in X_{\ast}(T) \subset \Fh_{\BR}$, 
and $x:=s_{1} \in W$. 
It follows from Lemma~\ref{lem:fmax} that 
an element $P \in \mv(\lambda)$ is contained in
$\mv^{x}(\lambda)$ if and only if 
$f_{1}^{\max}f_{2}^{\max}P=P_{w_{0} \cdot \lambda}$. 
Hence we see from the crystal graph \eqref{eq:cg2} below of 
$\mv(\lambda) \cong \CB(\lambda)$
that $f_{1}^{2}f_{2}P_{\lambda}$
is contained in $\mv^{x}(\lambda)$. 
However, we deduce from the crystal graph \eqref{eq:cg2}
that $f_{1}^{2}f_{2}P_{\lambda}$ cannot be obtained from 
$P_{x \cdot \lambda}$ by successively applying 
$f_{1}$ and $f_{2}$. 
%
%
\begin{equation} \label{eq:cg2}
\hspace*{-25mm}
\begin{array}{c}
\unitlength 0.1in
\begin{picture}( 24.4000, 24.0500)(  0.0500,-25.1500)
%
\special{pn 8}%
\special{ar 1796 396 50 50  0.0000000 6.2831853}%
%
\special{pn 8}%
\special{ar 1196 796 50 50  0.0000000 6.2831853}%
%
\special{pn 8}%
\special{ar 1196 1396 50 50  0.0000000 6.2831853}%
%
\special{pn 8}%
\special{ar 1196 1996 50 50  0.0000000 6.2831853}%
%
\special{pn 8}%
\special{ar 1796 2396 50 50  0.0000000 6.2831853}%
%
\special{pn 8}%
\special{ar 2396 1996 50 50  0.0000000 6.2831853}%
%
\special{pn 8}%
\special{ar 2396 1396 50 50  0.0000000 6.2831853}%
%
\special{pn 8}%
\special{ar 2396 796 50 50  0.0000000 6.2831853}%
%
\special{pn 8}%
\special{pa 1756 416}%
\special{pa 1236 756}%
\special{fp}%
\special{sh 1}%
\special{pa 1236 756}%
\special{pa 1302 736}%
\special{pa 1280 726}%
\special{pa 1280 702}%
\special{pa 1236 756}%
\special{fp}%
%
\special{pn 8}%
\special{pa 1196 846}%
\special{pa 1196 1346}%
\special{fp}%
\special{sh 1}%
\special{pa 1196 1346}%
\special{pa 1216 1278}%
\special{pa 1196 1292}%
\special{pa 1176 1278}%
\special{pa 1196 1346}%
\special{fp}%
%
\special{pn 8}%
\special{pa 1196 1446}%
\special{pa 1196 1946}%
\special{fp}%
\special{sh 1}%
\special{pa 1196 1946}%
\special{pa 1216 1878}%
\special{pa 1196 1892}%
\special{pa 1176 1878}%
\special{pa 1196 1946}%
\special{fp}%
%
\special{pn 8}%
\special{pa 1226 2036}%
\special{pa 1746 2376}%
\special{fp}%
\special{sh 1}%
\special{pa 1746 2376}%
\special{pa 1700 2322}%
\special{pa 1700 2346}%
\special{pa 1678 2356}%
\special{pa 1746 2376}%
\special{fp}%
%
\special{pn 8}%
\special{pa 2396 1446}%
\special{pa 2396 1946}%
\special{fp}%
\special{sh 1}%
\special{pa 2396 1946}%
\special{pa 2416 1878}%
\special{pa 2396 1892}%
\special{pa 2376 1878}%
\special{pa 2396 1946}%
\special{fp}%
%
\special{pn 8}%
\special{pa 2396 846}%
\special{pa 2396 1346}%
\special{fp}%
\special{sh 1}%
\special{pa 2396 1346}%
\special{pa 2416 1278}%
\special{pa 2396 1292}%
\special{pa 2376 1278}%
\special{pa 2396 1346}%
\special{fp}%
%
\special{pn 8}%
\special{pa 1836 426}%
\special{pa 2356 766}%
\special{fp}%
\special{sh 1}%
\special{pa 2356 766}%
\special{pa 2310 712}%
\special{pa 2310 736}%
\special{pa 2288 746}%
\special{pa 2356 766}%
\special{fp}%
%
\special{pn 8}%
\special{pa 2356 2026}%
\special{pa 1836 2366}%
\special{fp}%
\special{sh 1}%
\special{pa 1836 2366}%
\special{pa 1902 2346}%
\special{pa 1880 2336}%
\special{pa 1880 2312}%
\special{pa 1836 2366}%
\special{fp}%
\put(17.9500,-1.9500){\makebox(0,0){$P_{\lambda}$}}%
\put(21.5000,-5.5000){\makebox(0,0)[lb]{$2$}}%
\put(14.5000,-5.5000){\makebox(0,0)[rb]{$1$}}%
\put(10.9500,-10.9500){\makebox(0,0){$2$}}%
\put(10.9500,-16.9500){\makebox(0,0){$2$}}%
\put(24.9500,-16.9500){\makebox(0,0){$1$}}%
\put(24.9500,-10.9500){\makebox(0,0){$1$}}%
\put(13.9500,-21.9500){\makebox(0,0)[rt]{$1$}}%
\put(21.9500,-21.9500){\makebox(0,0)[lt]{$2$}}%
\put(27.5000,-20.0000){\makebox(0,0){$f_1^2f_2P_{\lambda}$}}%
\put(9.5000,-8.0000){\makebox(0,0){$P_{x \cdot \lambda}$}}%
\put(18.0000,-26.0000){\makebox(0,0){$P_{w_{0} \cdot \lambda}$}}%
\end{picture}%
\end{array}
\end{equation}

\vsp

\end{ex}

\begin{proof}[Proof of the ``only if'' part of Theorem~\ref{thm:p-opdem}]
Let $P=P(\mu_{\bullet}) \in \mv^{x}(\lambda)$ be an MV polytope 
with GGMS datum $\mu_{\bullet}=(\mu_{w})_{w \in W}$. 
Since $P=P(\mu_{\bullet})=
\Conv\,\bigl\{\mu_{w} \mid w \in W\bigr\}$, and 
since $P_{x \cdot \lambda}=\Conv (W_{\le x} \cdot \lambda)$
by Theorem~\ref{thm:ext}, 
it suffices to show that for each $z \in W_{\le x}$, 
there exists some $w \in W$ such that $\mu_{w}=z \cdot \lambda$. 
Now, let us take an arbitrary $z \in W_{\le x}$, 
and set $q:=\ell(zw_{0})$. 
Then we know (see \cite[Proposition~3.2.4]{Kas} and 
the comment following \cite[Proposition~4.3]{Kas}) that 
$\mv^{z}(\lambda) \supset \mv^{x}(\lambda)$, which implies 
that $P \in \mv^{z}(\lambda)$. 
Therefore, there exists 
$\bi=(i_{1},\,i_{2},\,\dots,\,i_{m}) \in R(w_{0})$ with 
$\wi{q}=zw_{0}$ such that $\mu_{\wi{l}}=
\wi{l}w_{0} \cdot \lambda$ for all $q \le l \le m$. 
In particular, $\mu_{\wi{q}}=\wi{q}w_{0} \cdot \lambda= 
(zw_{0})w_{0} \cdot \lambda= z \cdot \lambda$.
Thus, we have proved the ``only if'' part of 
Theorem~\ref{thm:p-opdem}. 
\end{proof}

In order to prove the ``if'' part of Theorem~\ref{thm:p-opdem}, 
we recall from \cite[\S2.3]{Kam1} the notion of 
Berenstein-Zelevinsky datum. 
We set $\Gamma:=
 \bigl\{w \cdot \Lambda_{i} \mid w \in W,\,i \in I\bigr\}$, 
where $\Lambda_{i}$, $i \in I$, are 
the fundamental weights for $\Fg$. 
Let $P=P(\mu_{\bullet}) \in \mv(\lambda)$ be an MV polytope 
with GGMS datum $\mu_{\bullet}=(\mu_{w})_{w \in W}$. 
For each $\gamma \in \Gamma$, 
we define $M_{\gamma} \in \BQ$ by: 
\begin{equation*}
M_{\gamma}:=\pair{w \cdot \Lambda_{i}}{\mu_{w}} \in \BQ
\quad
\text{if $\gamma=w \cdot \Lambda_{i}$ 
  for some $w \in W$ and $i \in I$};
\end{equation*}
note that the rational number $M_{\gamma} \in \BQ$ 
does note depend on 
the expression $\gamma=w \cdot \Lambda_{i}$, 
$w \in W$, $i \in I$, of $\gamma \in \Gamma$.
We call the collection 
$M_{\bullet}=(M_{\gamma})_{\gamma \in \Gamma}$ 
of rational numbers 
the Berenstein-Zelevinsky (BZ for short) datum of 
the MV polytope $P$. We know from 
\cite[Proposition~2.2]{Kam1} that
%
%
\begin{equation} \label{eq:polybz}
P=P(\mu_{\bullet})=
 \bigl\{h \in \Fh_{\BR} \mid 
   \pair{\gamma}{h} \ge M_{\gamma} \ 
   \text{for all } \gamma \in \Gamma
 \bigr\}.
\end{equation}
%
%
\begin{lem} \label{lem:bz}
Let $P_{1}=P(\mu_{\bullet}^{(1)}),\,
P_{2}=P(\mu_{\bullet}^{(2)}) \in \mv(\lambda)$ be MV polytopes 
with GGMS data 
$\mu_{\bullet}^{(1)}=(\mu_{w}^{(1)})_{w \in W}$, 
$\mu_{\bullet}^{(2)}=(\mu_{w}^{(2)})_{w \in W}$, and 
denote by 
$M_{\bullet}^{(1)}=(M_{\gamma}^{(1)})_{\gamma \in \Gamma}$, 
$M_{\bullet}^{(2)}=(M_{\gamma}^{(2)})_{\gamma \in \Gamma}$ 
the BZ data of $P_{1}$, $P_{2}$, respectively. 
Then, $P_{1} \subset P_{2}$ if and only if 
$M_{\gamma}^{(1)} \ge M_{\gamma}^{(2)}$ 
for all $\gamma \in \Gamma$. 
\end{lem}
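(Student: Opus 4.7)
The plan is to exploit the observation that the inequalities defining the pseudo-Weyl polytope via \eqref{eq:polybz} are already tight at the corresponding vertices: for $\gamma = w\cdot\Lambda_{i}$, the vertex $\mu_{w}$ saturates $\pair{\gamma}{\cdot}\ge M_{\gamma}$ by the very definition $M_{\gamma}=\pair{w\cdot\Lambda_{i}}{\mu_{w}}$. Once this is recognized, both implications reduce to one-line arguments.

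First, I handle the ``if'' direction, which is essentially formal. Assuming $M^{(1)}_{\gamma}\ge M^{(2)}_{\gamma}$ for every $\gamma\in\Gamma$, and taking any $h\in P_{1}$, the characterization \eqref{eq:polybz} applied to $P_{1}$ yields $\pair{\gamma}{h}\ge M^{(1)}_{\gamma}\ge M^{(2)}_{\gamma}$ for all $\gamma\in\Gamma$, and then the same characterization applied to $P_{2}$ shows $h\in P_{2}$.

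For the ``only if'' direction, I fix an arbitrary $\gamma\in\Gamma$ and write it as $\gamma=w\cdot\Lambda_{i}$ for some $w\in W$ and $i\in I$. Since $\mu^{(1)}_{w}$ is a vertex of $P_{1}$ and $P_{1}\subset P_{2}$ by assumption, we have $\mu^{(1)}_{w}\in P_{2}$; applying \eqref{eq:polybz} to $P_{2}$ at the point $\mu^{(1)}_{w}$ gives $\pair{\gamma}{\mu^{(1)}_{w}}\ge M^{(2)}_{\gamma}$. On the other hand, the defining formula for the BZ datum of $P_{1}$ gives $\pair{\gamma}{\mu^{(1)}_{w}}=M^{(1)}_{\gamma}$. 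Combining these two relations yields the required inequality $M^{(1)}_{\gamma}\ge M^{(2)}_{\gamma}$.

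I anticipate no substantial obstacle; the statement is essentially a book-keeping consequence of \eqref{eq:polybz} together with the definition of the BZ datum. The only point requiring care is the well-definedness of $M_{\gamma}$ independently of the chosen expression $\gamma=w\cdot\Lambda_{i}$, but this has already been noted in the paragraph preceding the statement and may be invoked freely.
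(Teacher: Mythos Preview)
Your proof is correct and follows essentially the same approach as the paper: both directions use \eqref{eq:polybz}, with the ``only if'' part obtained by evaluating the inequality for $P_{2}$ at the vertex $\mu_{w}^{(1)}\in P_{1}\subset P_{2}$ and recognizing $\pair{\gamma}{\mu_{w}^{(1)}}=M_{\gamma}^{(1)}$.
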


\begin{proof}
The ``if'' part is obvious by \eqref{eq:polybz}. 
We show the ``only if'' part. 
Since $P_{1}=\Conv\,\bigl\{\mu_{w}^{(1)} \mid w \in W\bigr\}$, 
and since $P_{1} \subset P_{2}$ by assumption, 
it follows that $\mu_{w}^{(1)} \in P_{2}$ for all $w \in W$. 
Take an arbitrary element $\gamma=w \cdot \Lambda_{i}$, 
$w \in W$, $i \in I$, of $\Gamma$. 
Because $\mu_{w}^{(1)} \in P_{2}$ as above, 
we have $\pair{\gamma}{\mu_{w}^{(1)}} \ge M_{\gamma}^{(2)}$ 
by \eqref{eq:polybz}. 
Hence, by the definition of BZ data, we get
$M_{\gamma}^{(1)}=
 \pair{w \cdot \Lambda_{i}}{\mu_{w}^{(1)}}=
 \pair{\gamma}{\mu_{w}^{(1)}} \ge M_{\gamma}^{(2)}$. 
This proves the lemma. 
\end{proof}

\begin{proof}[Proof of the ``if'' part of Theorem~\ref{thm:p-opdem}]
Let $P=P(\mu_{\bullet}) \in \mv(\lambda)$ be an MV polytope 
with GGMS datum $\mu_{\bullet}=(\mu_{w})_{w \in W}$ such that 
$P \supset P_{x \cdot \lambda}$, and denote by 
$M_{\bullet}=(M_{\gamma})_{\gamma \in \Gamma}$ 
the BZ datum of $P$. 
Take $\bi=(i_{1},\,i_{2},\,\dots,\,i_{m}) \in R(w_{0})$ 
such that $\wi{p}=\si{1}\si{2} \cdots \si{p}=xw_{0}$, 
with $p=\ell(xw_{0})$.
In order to prove the ``if'' part, it suffices to show 
that $M_{\wi{l-1} \cdot \Lambda_{i_{l}}}=
M_{\wi{l} \cdot \Lambda_{i_{l}}}$ for all $p+1 \le l \le m$. 
Indeed, we know from \cite[Theorem~6.6]{Kam1} that 
for $1 \le l \le m$, 
\begin{equation*}
N_{l}:=\vp_{i_l}
 (f_{i_{l-1}}^{\max} \cdots f_{i_{2}}^{\max}f_{i_{1}}^{\max}P)
\end{equation*}
is equal to 
$M_{\wi{l-1} \cdot \Lambda_{i_{l}}}-
 M_{\wi{l} \cdot \Lambda_{i_{l}}}$. 
Hence, if $M_{\wi{l-1} \cdot \Lambda_{i_{l}}}=
M_{\wi{l} \cdot \Lambda_{i_{l}}}$ holds for all $p+1 \le l \le m$, 
then it follows that $N_{l}=0$ for all $p+1 \le l \le m$. 
Therefore, the argument after Claim~\ref{c:opdem3} 
in the proof of Theorem~3.5.1 shows that 
$P \in \Psi_{\lambda}(\CB^{x}(\lambda))=\mv^{x}(\lambda)$. 

We will show that $M_{\wi{l-1} \cdot \Lambda_{i_{l}}}=
M_{\wi{l} \cdot \Lambda_{i_{l}}}$ for all $p+1 \le l \le m$. 
By the definition of $\mv(\lambda)$, 
the polytope $P \in \mv(\lambda)$ is contained in 
$\Conv(W \cdot \lambda)=P_{w_{0} \cdot \lambda}$. 
Hence we have
\begin{equation*}
P_{x \cdot \lambda} \subset P \subset \Conv(W \cdot \lambda)=
P_{w_{0} \cdot \lambda}. 
\end{equation*}
Therefore, if we denote by 
$M_{\bullet}^{(1)}=(M_{\gamma}^{(1)})_{\gamma \in \Gamma}$ and 
$M_{\bullet}^{(2)}=(M_{\gamma}^{(2)})_{\gamma \in \Gamma}$ 
the BZ data of $P_{x \cdot \lambda}$ and 
$P_{w_{0} \cdot \lambda}$, respectively, 
we see from Lemma~\ref{lem:bz} that 
%
%
\begin{equation} \label{eq:bz-ge}
M_{\gamma}^{(1)} \ge M_{\gamma} \ge M_{\gamma}^{(2)}
\quad
\text{for all $\gamma \in \Gamma$}.
\end{equation}

\begin{claim*}
For all $p+1 \le l \le m$, we have 
\begin{equation*}
  M_{\wi{l-1} \cdot \Lambda_{i_{l}}}^{(1)}=
  M_{\wi{l-1} \cdot \Lambda_{i_{l}}}^{(2)}=
  \pair{\Lambda_{i_{l}}}{w_{0} \cdot \lambda}, \qquad
  M_{\wi{l} \cdot \Lambda_{i_{l}}}^{(1)}=
  M_{\wi{l} \cdot \Lambda_{i_{l}}}^{(2)}=
  \pair{\Lambda_{i_{l}}}{w_{0} \cdot \lambda}. 
\end{equation*}
\end{claim*}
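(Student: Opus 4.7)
The plan is to read off both BZ data from the explicit GGMS descriptions of $P_{x\cdot\lambda}$ and $P_{w_{0}\cdot\lambda}$ already recorded in the paper, and to observe that both evaluations collapse to the single scalar $\pair{\Lambda_{i_{l}}}{w_{0}\cdot\lambda}$ via the $W$-invariance of the pairing.

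First I would invoke Remark~\ref{rem:int}\,(2), which identifies the GGMS datum of $P_{w_{0}\cdot\lambda}=\Conv(W\cdot\lambda)$ as $\mu^{(2)}_{w}=ww_{0}\cdot\lambda$ for every $w\in W$. Plugging this into the definition of the BZ datum gives, for every $w\in W$ and $i\in I$,
$$
M^{(2)}_{w\cdot\Lambda_{i}}
=\Bpair{w\cdot\Lambda_{i}}{ww_{0}\cdot\lambda}
=\pair{\Lambda_{i}}{w_{0}\cdot\lambda}.
$$
Specialising to $w\in\{\wi{l-1},\wi{l}\}$ and $i=i_{l}$ immediately yields both equalities for $M^{(2)}$ claimed in the statement.

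Next I would turn to $P_{x\cdot\lambda}=P(\mu_{\bullet}^{x})$. Since the reduced word $\bi$ was chosen so that $\wi{p}=xw_{0}$, Example~\ref{ex:1sk} (together with Proposition~\ref{prop:1sk} and Theorem~\ref{thm:ext}\,(1)) gives $\min S(xw_{0},\,\bi)=(1,2,\ldots,p)$ and hence $\mu^{(1)}_{\wi{l}}=\xii{l}=\wi{l}w_{0}\cdot\lambda$ for every $p\le l\le m$. The hypothesis $p+1\le l\le m$ places both $\wi{l-1}$ and $\wi{l}$ in this range, so the same cancellation as above produces
$$
M^{(1)}_{\wi{l-1}\cdot\Lambda_{i_{l}}}
=\Bpair{\wi{l-1}\cdot\Lambda_{i_{l}}}{\wi{l-1}w_{0}\cdot\lambda}
=\pair{\Lambda_{i_{l}}}{w_{0}\cdot\lambda},
$$
and identically $M^{(1)}_{\wi{l}\cdot\Lambda_{i_{l}}}=\pair{\Lambda_{i_{l}}}{w_{0}\cdot\lambda}$, completing the claim.

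There is essentially no obstacle; the only mild subtlety is to ensure that Example~\ref{ex:1sk} really supplies $\xii{l}$ at the left endpoint $l=p$ (needed when $l-1=p$ in the claim), but this is immediate from \eqref{eq:xi}, since for $l\ge p$ the auxiliary word $\vi{l}$ is trivial. Once the claim is in hand, the sandwich \eqref{eq:bz-ge} forces the intermediate $M_{\gamma}$ of $P$ to coincide with the common boundary value at $\gamma=\wi{l-1}\cdot\Lambda_{i_{l}}$ and $\gamma=\wi{l}\cdot\Lambda_{i_{l}}$, which is precisely what is required to conclude $N_{l}=0$ for all $p+1\le l\le m$ and thereby finish the proof of Theorem~\ref{thm:p-opdem}.
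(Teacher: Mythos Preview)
Your proof is correct and follows essentially the same approach as the paper: both invoke Remark~\ref{rem:int}\,(2) for the GGMS datum of $P_{w_{0}\cdot\lambda}$ and Example~\ref{ex:1sk} for that of $P_{x\cdot\lambda}$ along the chosen $\bi$, then reduce each BZ entry to $\pair{\Lambda_{i_{l}}}{w_{0}\cdot\lambda}$ via $W$-invariance of the pairing. The only cosmetic difference is that the paper treats $M^{(1)}$ first and then says ``by the same calculation'' for $M^{(2)}$, whereas you reverse this order.
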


\noindent
{\it Proof of Claim.}
Let $\mu_{\bullet}^{(1)}=(\mu_{w}^{(1)})_{w \in W}$ and  
$\mu_{\bullet}^{(2)}=(\mu_{w}^{(2)})_{w \in W}$ be 
the GGMS data of $P_{x \cdot \lambda}$ and 
$P_{w_{0} \cdot \lambda}$, respectively. We deduce 
(see Example~\ref{ex:1sk}) that 
$\mu_{\wi{l}}^{(1)}=\wi{l}w_{0} \cdot \lambda$ 
for all $p \le l \le m$. Therefore, we have
\begin{align*}
& M_{\wi{l-1} \cdot \Lambda_{i_{l}}}^{(1)}=
 \pair{ \wi{l-1} \cdot \Lambda_{i_{l}} }
      { \mu_{\wi{l-1}} } =
 \pair{ \wi{l-1} \cdot \Lambda_{i_{l}} }
      { \wi{l-1}w_{0} \cdot \lambda } =
 \pair{ \Lambda_{i_{l}} }
      { w_{0} \cdot \lambda }, \\
& M_{\wi{l} \cdot \Lambda_{i_{l}}}^{(1)}=
 \pair{ \wi{l} \cdot \Lambda_{i_{l}} }
      { \mu_{\wi{l}} } =
 \pair{ \wi{l} \cdot \Lambda_{i_{l}} }
      { \wi{l}w_{0} \cdot \lambda } =
 \pair{ \Lambda_{i_{l}} }
      { w_{0} \cdot \lambda }.
\end{align*}

Also, we know (see Remark~\ref{rem:int}\,(2)) 
that $\mu_{\wi{l}}^{(2)}=
\wi{l}w_{0} \cdot \lambda$ for all $0 \le l \le m$.
Hence, by the same calculation as above, we get
\begin{equation*}
M_{\wi{l-1} \cdot \Lambda_{i_{l}}}^{(2)}=
 \pair{ \Lambda_{i_{l}} }
      { w_{0} \cdot \lambda }, \qquad
M_{\wi{l} \cdot \Lambda_{i_{l}}}^{(2)}=
 \pair{ \Lambda_{i_{l}} }
      { w_{0} \cdot \lambda }.
\end{equation*}
This proves the claim. \bqed

\vsp

Let us take $p+1 \le l \le m$ arbitrarily. 
By combining the claim above 
with \eqref{eq:bz-ge}, we see that
\begin{equation*}
\pair{\Lambda_{i_{l}}}{w_{0} \cdot \lambda} = 
M_{\wi{l-1} \cdot \Lambda_{i_{l}}}^{(1)} \ge 
M_{\wi{l-1} \cdot \Lambda_{i_{l}}} \ge 
M_{\wi{l-1} \cdot \Lambda_{i_{l}}}^{(2)} =
\pair{\Lambda_{i_{l}}}{w_{0} \cdot \lambda},
\end{equation*}
and hence that 
$M_{\wi{l-1} \cdot \Lambda_{i_{l}}}=
\pair{\Lambda_{i_{l}}}{w_{0} \cdot \lambda}$. 
Similarly, we see that 
$M_{\wi{l} \cdot \Lambda_{i_{l}}}=
\pair{\Lambda_{i_{l}}}{w_{0} \cdot \lambda}$. 
Thus, we obtain 
$M_{\wi{l-1} \cdot \Lambda_{i_{l}}}=
 \pair{\Lambda_{i_{l}}}{w_{0} \cdot \lambda}=
 M_{\wi{l} \cdot \Lambda_{i_{l}}}$. 
This completes the proof of the ``if'' part of 
Theorem~\ref{thm:p-opdem}. 
\end{proof}

%
\subsection{Open problem.}
\label{subsec:question}

In view of Theorem~\ref{thm:p-opdem}, 
it is natural to pose the following question. 
%
%
\begin{question} \label{q:dem}
Let us take an arbitrary $x \in W$. 
Are the MV polytopes lying in the Demazure crystal 
$\mv_{x}(\lambda)$ all contained (as a set)
in the extremal MV polytope 
$P_{x \cdot \lambda}=
 \Conv(W_{\le x} \cdot \lambda)$\,?
\end{question}
%
%
\begin{rem} \label{rem:q-dem}
We cannot expect that the converse statement holds, 
as the following example shows. 
Let $\Fg$ be the simple Lie algebra of type $A_{2}$; 
the set $R(w_{0})$ consists of two elements $\bi:=(1,2,1)$ 
and $\bj:=(2,1,2)$. Set $x:=s_{1}s_{2} \in W$ and 
$\lambda:=h_{1}+h_{2} \in X_{\ast}(T) \subset \Fh_{\BR}$.
Then, the MV polytope $f_{2}f_{1}P_{\lambda} \in \mv(\lambda)$ is 
contained (as a set) 
in the extremal MV polytope $P_{x \cdot \lambda}=
 \Conv(W_{\le x} \cdot \lambda)$, but 
 $f_{2}f_{1}P_{\lambda}$ is not an element of 
the Demazure crystal $\mv_{x}(\lambda)$. 
Indeed, let $\mu_{\bullet}=(\mu_{w})_{w \in W}$ be 
the GGMS datum of 
the MV polytope $f_{2}f_{1}P_{\lambda}$. 
Using the definition of the Kashiwara operators 
$f_{1}$, $f_{2}$ and 
the formula \eqref{eq:3bm}, we deduce that the integers
$\Ni{l}=\Ni{l}(\mu_{\bullet})$, $1 \le l \le 3$, and 
$\Nj{l}=\Nj{l}(\mu_{\bullet})$, $1 \le l \le 3$, are given as:
%
%
\begin{equation} \label{eq:ex-lgt}
\begin{cases}
\Ni{1}=0, \qquad \Ni{2}=1, \qquad \Ni{3}=0, \\[1.5mm]
\Nj{1}=1, \qquad \Nj{2}=0, \qquad \Nj{3}=1,
\end{cases}
\end{equation}
and hence the GGMS datum $\mu_{\bullet}=(\mu_{w})_{w \in W}$ 
of $f_{2}f_{1}P_{\lambda}$ is given as: 
\begin{align*}
& \mu_{w_{0}}=\mu_{\wi{3}}=\mu_{\wj{3}}=\lambda, \qquad 
  \mu_{s_1s_2}=\mu_{\wi{2}}=\lambda, \\
& \mu_{s_1}=\mu_{\wi{1}}=\lambda-(h_1+h_2)=0, \qquad
  \mu_{e}=\mu_{\wi{0}}=\mu_{\wj{0}}=\lambda-(h_1+h_2)=0, \\
& \mu_{s_2s_1}=\mu_{\wj{2}}=\lambda-h_1=s_{1} \cdot \lambda, \qquad
  \mu_{s_2}=\mu_{\wj{1}}=\lambda-h_1=s_{1} \cdot \lambda. 
\end{align*}
From these, it is easily seen that 
the MV polytope $f_{2}f_{1}P_{\lambda}$ is contained 
in the extremal MV polytope $P_{x \cdot \lambda}=
 \Conv(W_{\le x} \cdot \lambda)$ (see the figures below).
However, since $xw_{0}=s_{1}s_{2}s_{2}s_{1}s_{2}=s_{2}$, and hence 
$S(xw_{0},\,\bi)=\bigl\{(2)\bigr\}$ and 
$S(xw_{0},\,\bj)=\bigl\{(1),\,(3)\bigr\}$, it follows 
from \eqref{eq:ex-lgt} that $f_{2}f_{1}P_{\lambda}$ is not 
an element of $\mv_{x}(\lambda)$. 

\vspace{7mm}

\newcommand{\convp}{%
 P_{x \cdot \lambda}=\Conv(W_{\le x} \cdot \lambda)}

\hspace*{-15mm}
\unitlength 0.1in
\begin{picture}( 24.0000, 14.3500)(  4.1200,-19.6900)
%
\special{pn 8}%
\special{pa 2288 746}%
\special{pa 1838 1046}%
\special{fp}%
%
\special{pn 8}%
\special{pa 1838 1046}%
\special{pa 1838 1646}%
\special{fp}%
%
\special{pn 8}%
\special{pa 1838 1646}%
\special{pa 2288 1946}%
\special{fp}%
%
\special{pn 8}%
\special{pa 2288 1946}%
\special{pa 2738 1646}%
\special{fp}%
%
\special{pn 8}%
\special{pa 2738 1646}%
\special{pa 2738 1046}%
\special{fp}%
%
\special{pn 8}%
\special{pa 2738 1046}%
\special{pa 2288 746}%
\special{fp}%
\put(22.8800,-6.2900){\makebox(0,0){$\lambda$}}%
\put(28.1200,-10.4900){\makebox(0,0)[lb]{$s_2\lambda$}}%
\put(28.1200,-16.4900){\makebox(0,0)[lb]{$s_2s_1\lambda$}}%
\put(22.8800,-20.5400){\makebox(0,0){$w_0\lambda$}}%
\put(17.6200,-16.4900){\makebox(0,0)[rb]{$s_1s_2\lambda$}}%
\put(17.6200,-10.4900){\makebox(0,0)[rb]{$s_1\lambda$}}%
%
\special{pn 8}%
\special{sh 0.300}%
\special{ar 2288 742 22 22  0.0000000 6.2831853}%
%
\special{pn 8}%
\special{sh 0.300}%
\special{ar 2738 1042 22 22  0.0000000 6.2831853}%
\put(27.7500,-7.0400){\makebox(0,0)[lb]{$\convp$}}%
%
\special{pn 13}%
\special{pa 2738 742}%
\special{pa 2438 1042}%
\special{fp}%
\special{sh 1}%
\special{pa 2438 1042}%
\special{pa 2500 1008}%
\special{pa 2476 1004}%
\special{pa 2472 980}%
\special{pa 2438 1042}%
\special{fp}%
%
\special{pn 8}%
\special{sh 0.300}%
\special{ar 1838 1642 22 22  0.0000000 6.2831853}%
%
\special{pn 8}%
\special{sh 0.300}%
\special{ar 1838 1042 22 22  0.0000000 6.2831853}%
%
\special{pn 8}%
\special{pa 2738 1042}%
\special{pa 1838 1642}%
\special{fp}%
%
\special{pn 4}%
\special{pa 2340 1304}%
\special{pa 1988 952}%
\special{fp}%
\special{pa 2370 1290}%
\special{pa 2018 936}%
\special{fp}%
\special{pa 2392 1266}%
\special{pa 2040 914}%
\special{fp}%
\special{pa 2422 1252}%
\special{pa 2070 900}%
\special{fp}%
\special{pa 2446 1230}%
\special{pa 2092 876}%
\special{fp}%
\special{pa 2476 1214}%
\special{pa 2122 862}%
\special{fp}%
\special{pa 2506 1200}%
\special{pa 2152 846}%
\special{fp}%
\special{pa 2528 1176}%
\special{pa 2176 824}%
\special{fp}%
\special{pa 2430 1034}%
\special{pa 2206 810}%
\special{fp}%
\special{pa 2446 1004}%
\special{pa 2228 786}%
\special{fp}%
\special{pa 2476 990}%
\special{pa 2258 772}%
\special{fp}%
\special{pa 2498 966}%
\special{pa 2296 764}%
\special{fp}%
\special{pa 2520 944}%
\special{pa 2408 832}%
\special{fp}%
\special{pa 2558 1162}%
\special{pa 2438 1042}%
\special{fp}%
\special{pa 2580 1140}%
\special{pa 2476 1034}%
\special{fp}%
\special{pa 2610 1124}%
\special{pa 2490 1004}%
\special{fp}%
\special{pa 2640 1110}%
\special{pa 2512 982}%
\special{fp}%
\special{pa 2662 1086}%
\special{pa 2536 960}%
\special{fp}%
\special{pa 2692 1072}%
\special{pa 2558 936}%
\special{fp}%
\special{pa 2716 1050}%
\special{pa 2678 1012}%
\special{fp}%
\special{pa 2310 1320}%
\special{pa 1958 966}%
\special{fp}%
\special{pa 2288 1342}%
\special{pa 1936 990}%
\special{fp}%
\special{pa 2258 1356}%
\special{pa 1906 1004}%
\special{fp}%
\special{pa 2236 1380}%
\special{pa 1882 1026}%
\special{fp}%
\special{pa 2206 1394}%
\special{pa 1860 1050}%
\special{fp}%
\special{pa 2176 1410}%
\special{pa 1838 1072}%
\special{fp}%
\special{pa 2152 1432}%
\special{pa 1838 1116}%
\special{fp}%
\special{pa 2122 1446}%
\special{pa 1838 1162}%
\special{fp}%
\special{pa 2100 1470}%
\special{pa 1838 1206}%
\special{fp}%
\special{pa 2070 1484}%
\special{pa 1838 1252}%
\special{fp}%
%
\special{pn 4}%
\special{pa 2040 1500}%
\special{pa 1838 1296}%
\special{fp}%
\special{pa 2018 1522}%
\special{pa 1838 1342}%
\special{fp}%
\special{pa 1988 1536}%
\special{pa 1838 1386}%
\special{fp}%
\special{pa 1966 1560}%
\special{pa 1838 1432}%
\special{fp}%
\special{pa 1936 1574}%
\special{pa 1838 1476}%
\special{fp}%
\special{pa 1906 1590}%
\special{pa 1838 1522}%
\special{fp}%
\special{pa 1882 1612}%
\special{pa 1838 1566}%
\special{fp}%
\end{picture}%
\hspace*{15mm}
\unitlength 0.1in
\begin{picture}( 24.0000, 14.3500)(  8.1200,-19.7400)
%
\special{pn 8}%
\special{pa 2688 750}%
\special{pa 2238 1050}%
\special{fp}%
%
\special{pn 8}%
\special{pa 2238 1050}%
\special{pa 2238 1650}%
\special{fp}%
%
\special{pn 8}%
\special{pa 2238 1650}%
\special{pa 2688 1950}%
\special{fp}%
%
\special{pn 8}%
\special{pa 2688 1950}%
\special{pa 3138 1650}%
\special{fp}%
%
\special{pn 8}%
\special{pa 3138 1650}%
\special{pa 3138 1050}%
\special{fp}%
%
\special{pn 8}%
\special{pa 3138 1050}%
\special{pa 2688 750}%
\special{fp}%
\put(26.8800,-6.3400){\makebox(0,0){$\lambda$}}%
\put(32.1200,-10.5400){\makebox(0,0)[lb]{$s_2\lambda$}}%
\put(32.1200,-16.5400){\makebox(0,0)[lb]{$s_2s_1\lambda$}}%
\put(26.8800,-20.5900){\makebox(0,0){$w_0\lambda$}}%
\put(21.6200,-16.5400){\makebox(0,0)[rb]{$s_1s_2\lambda$}}%
\put(21.6200,-10.5400){\makebox(0,0)[rb]{$s_1\lambda$}}%
%
\special{pn 8}%
\special{sh 0.300}%
\special{ar 2688 746 22 22  0.0000000 6.2831853}%
%
\special{pn 8}%
\special{pa 2688 754}%
\special{pa 2688 1354}%
\special{fp}%
%
\special{pn 8}%
\special{sh 0.300}%
\special{ar 2238 1046 22 22  0.0000000 6.2831853}%
%
\special{pn 8}%
\special{sh 0.300}%
\special{ar 2688 1346 22 22  0.0000000 6.2831853}%
\put(31.7500,-7.0900){\makebox(0,0)[lb]{$f_2f_1P_{\lambda}$}}%
%
\special{pn 8}%
\special{pa 2688 1346}%
\special{pa 2238 1046}%
\special{fp}%
%
\special{pn 8}%
\special{pa 2688 926}%
\special{pa 2440 1174}%
\special{fp}%
\special{pa 2688 972}%
\special{pa 2462 1196}%
\special{fp}%
\special{pa 2688 1016}%
\special{pa 2492 1212}%
\special{fp}%
\special{pa 2688 1062}%
\special{pa 2516 1234}%
\special{fp}%
\special{pa 2688 1106}%
\special{pa 2546 1250}%
\special{fp}%
\special{pa 2688 1152}%
\special{pa 2576 1264}%
\special{fp}%
\special{pa 2688 1196}%
\special{pa 2598 1286}%
\special{fp}%
\special{pa 2688 1242}%
\special{pa 2628 1302}%
\special{fp}%
\special{pa 2688 1286}%
\special{pa 2650 1324}%
\special{fp}%
\special{pa 2688 882}%
\special{pa 2410 1160}%
\special{fp}%
\special{pa 2688 836}%
\special{pa 2380 1144}%
\special{fp}%
\special{pa 2688 792}%
\special{pa 2358 1122}%
\special{fp}%
\special{pa 2658 776}%
\special{pa 2328 1106}%
\special{fp}%
\special{pa 2522 866}%
\special{pa 2306 1084}%
\special{fp}%
\special{pa 2388 956}%
\special{pa 2276 1070}%
\special{fp}%
%
\special{pn 13}%
\special{pa 3138 746}%
\special{pa 2538 1046}%
\special{fp}%
\special{sh 1}%
\special{pa 2538 1046}%
\special{pa 2608 1034}%
\special{pa 2586 1022}%
\special{pa 2590 998}%
\special{pa 2538 1046}%
\special{fp}%
%
\special{pn 8}%
\special{pa 3138 1046}%
\special{pa 2238 1646}%
\special{dt 0.045}%
\end{picture}%

\vspace{5mm}

\end{rem}
%
%
\begin{rem} \label{rem:succ}
We know (see \cite[\S4.1]{BaG} and \cite[Theorem~4.7]{Kam2}) 
that for $P \in \mv(\lambda)$ and $j \in I$ with 
$e_{j}P \ne \bzero$, there holds $e_{j}P \subset P$. 
Therefore, 
if an MV polytope $P \in \mv_{x}(\lambda)$ were
obtained from the extremal MV polytope 
$P_{x \cdot \lambda}$ by successive application of 
the Kashiwara operators $e_{j}$, $j \in I$, 
then it would follow immediately that 
the polytope $P$ is contained in 
$P_{x \cdot \lambda}$. 
However, in general, not all 
MV polytopes in $\mv_{x}(\lambda)$ 
can be obtained from 
$P_{x \cdot \lambda}$ by successively applying 
$e_{j}$, $j \in I$, 
as Example~\ref{ex:raising} below shows.
\end{rem}
%
%
\begin{ex} \label{ex:raising}
As in Remark~\ref{rem:q-dem}, let 
$\Fg$ be the simple Lie algebra of type $A_{2}$, and 
set $x=s_{1}s_{2} \in W$ and 
$\lambda=h_{1}+h_{2} \in X_{\ast}(T) \subset \Fh_{\BR}$; 
the set $R(w_{0})$ consists of 
$\bi=(1,2,1)$ and $\bj=(2,1,2)$.
Then, $f_{1}P_{\lambda} \in \mv_{x}(\lambda)$. 
Indeed, let $\mu_{\bullet}=(\mu_{w})_{w \in W}$ be 
the GGMS datum of the MV polytope $f_{1}P_{\lambda}$. 
Using the definition of the Kashiwara operator $f_{1}$ 
and the formula \eqref{eq:3bm}, we deduce that the integers
$\Ni{l}=\Ni{l}(\mu_{\bullet})$, $1 \le l \le 3$, and 
$\Nj{l}=\Nj{l}(\mu_{\bullet})$, $1 \le l \le 3$, are given as:
%
%
\begin{equation} \label{eq:rai1}
\begin{cases}
\Ni{1}=1, \qquad \Ni{2}=0, \qquad \Ni{3}=0, \\[1.5mm]
\Nj{1}=0, \qquad \Nj{2}=0, \qquad \Nj{3}=1. 
\end{cases}
\end{equation}
Since $xw_{0}=s_{1}s_{2}s_{2}s_{1}s_{2}=s_{2}$, and hence 
$S(xw_{0},\,\bi)=\bigl\{(2)\bigr\}$ and 
$S(xw_{0},\,\bj)=\bigl\{(1),\,(3)\bigr\}$, it follows 
from \eqref{eq:rai1} that $f_{1}P_{\lambda}$ is 
an element of $\mv_{x}(\lambda)$. 
However, we deduce from the crystal graph \eqref{eq:cg1} below of 
$\mv(\lambda) \cong \CB(\lambda)$ that 
$f_{1}P_{\lambda}$ cannot be obtained from 
$P_{x \cdot \lambda}$ by successively applying 
$e_{1}$ and $e_{2}$. 
%
%
\begin{equation} \label{eq:cg1}
\hspace*{-25mm}
\begin{array}{c}
\unitlength 0.1in
\begin{picture}( 23.0500, 24.0500)(  1.4000,-25.1500)
%
\special{pn 8}%
\special{ar 1796 396 50 50  0.0000000 6.2831853}%
%
\special{pn 8}%
\special{ar 1196 796 50 50  0.0000000 6.2831853}%
%
\special{pn 8}%
\special{ar 1196 1396 50 50  0.0000000 6.2831853}%
%
\special{pn 8}%
\special{ar 1196 1996 50 50  0.0000000 6.2831853}%
%
\special{pn 8}%
\special{ar 1796 2396 50 50  0.0000000 6.2831853}%
%
\special{pn 8}%
\special{ar 2396 1996 50 50  0.0000000 6.2831853}%
%
\special{pn 8}%
\special{ar 2396 1396 50 50  0.0000000 6.2831853}%
%
\special{pn 8}%
\special{ar 2396 796 50 50  0.0000000 6.2831853}%
%
\special{pn 8}%
\special{pa 1756 416}%
\special{pa 1236 756}%
\special{fp}%
\special{sh 1}%
\special{pa 1236 756}%
\special{pa 1302 736}%
\special{pa 1280 726}%
\special{pa 1280 702}%
\special{pa 1236 756}%
\special{fp}%
%
\special{pn 8}%
\special{pa 1196 846}%
\special{pa 1196 1346}%
\special{fp}%
\special{sh 1}%
\special{pa 1196 1346}%
\special{pa 1216 1278}%
\special{pa 1196 1292}%
\special{pa 1176 1278}%
\special{pa 1196 1346}%
\special{fp}%
%
\special{pn 8}%
\special{pa 1196 1446}%
\special{pa 1196 1946}%
\special{fp}%
\special{sh 1}%
\special{pa 1196 1946}%
\special{pa 1216 1878}%
\special{pa 1196 1892}%
\special{pa 1176 1878}%
\special{pa 1196 1946}%
\special{fp}%
%
\special{pn 8}%
\special{pa 1226 2036}%
\special{pa 1746 2376}%
\special{fp}%
\special{sh 1}%
\special{pa 1746 2376}%
\special{pa 1700 2322}%
\special{pa 1700 2346}%
\special{pa 1678 2356}%
\special{pa 1746 2376}%
\special{fp}%
%
\special{pn 8}%
\special{pa 2396 1446}%
\special{pa 2396 1946}%
\special{fp}%
\special{sh 1}%
\special{pa 2396 1946}%
\special{pa 2416 1878}%
\special{pa 2396 1892}%
\special{pa 2376 1878}%
\special{pa 2396 1946}%
\special{fp}%
%
\special{pn 8}%
\special{pa 2396 846}%
\special{pa 2396 1346}%
\special{fp}%
\special{sh 1}%
\special{pa 2396 1346}%
\special{pa 2416 1278}%
\special{pa 2396 1292}%
\special{pa 2376 1278}%
\special{pa 2396 1346}%
\special{fp}%
%
\special{pn 8}%
\special{pa 1836 426}%
\special{pa 2356 766}%
\special{fp}%
\special{sh 1}%
\special{pa 2356 766}%
\special{pa 2310 712}%
\special{pa 2310 736}%
\special{pa 2288 746}%
\special{pa 2356 766}%
\special{fp}%
%
\special{pn 8}%
\special{pa 2356 2026}%
\special{pa 1836 2366}%
\special{fp}%
\special{sh 1}%
\special{pa 1836 2366}%
\special{pa 1902 2346}%
\special{pa 1880 2336}%
\special{pa 1880 2312}%
\special{pa 1836 2366}%
\special{fp}%
\put(17.9500,-1.9500){\makebox(0,0){$P_{\lambda}$}}%
\put(21.5000,-5.5000){\makebox(0,0)[lb]{$2$}}%
\put(14.5000,-5.5000){\makebox(0,0)[rb]{$1$}}%
\put(10.9500,-10.9500){\makebox(0,0){$2$}}%
\put(10.9500,-16.9500){\makebox(0,0){$2$}}%
\put(24.9500,-16.9500){\makebox(0,0){$1$}}%
\put(24.9500,-10.9500){\makebox(0,0){$1$}}%
\put(13.9500,-21.9500){\makebox(0,0)[rt]{$1$}}%
\put(21.9500,-21.9500){\makebox(0,0)[lt]{$2$}}%
\put(26.4500,-19.9500){\makebox(0,0){$P_{x \cdot \lambda}$}}%
\put(9.5000,-8.0000){\makebox(0,0){$f_{1}P_{\lambda}$}}%
\put(18.0000,-26.0000){\makebox(0,0){$P_{w_{0} \cdot \lambda}$}}%
\end{picture}%
\end{array}
\end{equation}

\vsp

\end{ex}

%
\section{
   Decomposition of Demazure crystals and 
   opposite Demazure crystals.}
\label{sec:decom}

We fix (once and for all) 
an arbitrary dominant coweight 
$\lambda \in X_{\ast}(T) \subset \Fh_{\BR}$. 

%
\subsection{Decomposition of Demazure crystals.}
\label{subsec:decom-dem}

We set 
$W_{\lambda}:=\bigl\{w \in W \mid w \cdot \lambda=\lambda\bigr\} \subset W$, 
and denote by $W^{\lambda}_{\min}$ the set of minimal coset representatives 
for the quotient set $W/W_{\lambda}$ 
with respect to the Bruhat ordering $\le$ on $W$ 
(see, for example, \cite[Chap.~2, \S4]{BB}).
For each $x \in W^{\lambda}_{\min}$, define a subset 
$\ol{\mv}_{x}(\lambda)$ of 
the Demazure crystal $\mv_{x}(\lambda)$ by (cf. \cite[\S9.1]{Kasb}): 
%
%
\begin{equation} \label{eq:dfn-md}
\ol{\mv}_{x}(\lambda)=
\mv_{x}(\lambda) \setminus 
\left(
 \bigcup_{z \in W^{\lambda}_{\min},\,\,z < x}
 \mv_{z}(\lambda)
\right); 
\end{equation}
recall from \cite[Proposition~3.2.4]{Kas} that 
for $z \in W^{\lambda}_{\min}$ with $z < x$, we have 
$\mv_{z}(\lambda) \subset \mv_{x}(\lambda)$. 
We can easily show by induction on $\ell(x)$ 
(see also \cite[\S9.1]{Kasb}) that 
for each $x \in W$, 
%
%
\begin{equation} \label{eq:init1}
\mv_{x}(\lambda)=
 \bigsqcup_{z \in W^{\lambda}_{\min},\,\,z \le x}
 \ol{\mv}_{z}(\lambda), 
\end{equation}
and in particular, 
%
%
\begin{equation} \label{eq:init2}
\mv(\lambda)=
 \bigsqcup_{z \in W^{\lambda}_{\min}}
 \ol{\mv}_{z}(\lambda); 
\end{equation}
note that $\mv_{w_0}(\lambda)=\mv(\lambda)$. 
Similarly, for each $x \in W$, define a subset 
$\ol{\mv}_{x}(\infty)$ of the Demazure crystal 
$\mv_{x}(\infty)$ by (cf. \cite[Proposition~9.1.6\,(2)]{Kasb}): 
\begin{equation*}
\ol{\mv}_{x}(\infty)=
\mv_{x}(\infty) \setminus 
\left(
 \bigcup_{z \in W,\,\,z < x}
 \mv_{z}(\infty)
\right). 
\end{equation*}
It follows from \cite[Proposition~9.1.6\,(2)]{Kasb} that 
for each $x \in W$, 
%
%
\begin{equation} \label{eq:initinf1}
\mv_{x}(\infty)=
 \bigsqcup_{z \in W,\,\,z \le x}
 \ol{\mv}_{z}(\infty), 
\end{equation}
and in particular, 
%
%
\begin{equation} \label{eq:initinf2}
\mv(\infty)=
 \bigsqcup_{z \in W}
 \ol{\mv}_{z}(\infty);
\end{equation}
note that $\mv_{w_0}(\infty)=\mv(\infty)$. 
For each $P \in \mv(\lambda)$ (resp., $P \in \mv(\infty)$), 
we denote by $\iota(P)$ 
the unique element $z \in W^{\lambda}_{\min}$ 
(resp., $z \in W$) for which 
$P \in \ol{\mv}_{z}(\lambda)$ 
(resp., $P \in \ol{\mv}_{z}(\infty)$); 
we see from \eqref{eq:init1} and \eqref{eq:init2} 
(resp., \eqref{eq:initinf1} and \eqref{eq:initinf2}) 
that the element $\iota(P) \in W$ specifies the 
smallest (with respect to the inclusion relation) 
Demazure crystal in which $P$ lies. 
%
%
\begin{rem} \label{rem:extinit}
Let $x \in W$, and denote by 
$x^{\lambda}_{\min} \in W^{\lambda}_{\min}$ 
the (unique) minimal element of the left coset 
$xW_{\lambda}$ with respect to the Bruhat ordering on $W$. 
From \eqref{eq:dfn-md}, it is easily seen 
that $\iota(P_{x \cdot \lambda})=x^{\lambda}_{\min}$. 
\end{rem}

Let $P=P(\mu_{\bullet})$ be an element of $\mv(\lambda)$ 
(resp., $\mv(\infty)$) with GGMS datum 
$\mu_{\bullet}=(\mu_{w})_{w \in W}$. For each 
$\bi=(i_{1},\,i_{2},\,\dots,\,i_{m}) \in R(w_{0})$, 
we set 
\begin{equation*}
T(P,\,\bi):=
\left\{
 \begin{array}{l|l}
 (a_{1},\,a_{2},\,\dots,\,a_{l}) \in [1,\,m]^{l} \ & \ 
\begin{array}{l}
0 \le l \le m \\[1.5mm]
1 \le a_{1} < a_{2} < \cdots < a_{l} \le m \\[1.5mm]
\Ni{a_q}=\Ni{a_q}(\mu_{\bullet})=0 \quad \text{for all $1 \le q \le l$}
\end{array}
\end{array}
\right\}, 
\end{equation*}
and define a subset $W(P,\,\bi)$ of $W$ by:
\begin{equation*}
W(P,\,\bi)=
\bigl\{ 
 \si{a_1}\si{a_2} \cdots \si{a_l}w_{0} \in W \mid 
 (a_{1},\,a_{2},\,\dots,\,a_{l}) \in T(P,\,\bi)
\bigr\}.
\end{equation*}
%
%
\begin{lem} \label{lem:init}
Let $P \in \mv(\lambda)$ 
{\rm(}resp., $P \in \mv(\infty)${\rm)}, and 
take an arbitrary $\bi \in R(w_{0})$. 
We have $\iota(P) \in W(P,\,\bi)$, and 
$z \ge \iota(P)$ for all $z \in W(P,\,\bi)$ 
with respect to the Bruhat ordering $\ge$ on $W$. 
\end{lem}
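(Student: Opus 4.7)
The plan is to recognize that $W(P,\bi)$ is essentially a reindexing of the set $\{x \in W : P \in \mv_x(\lambda)\}$ (resp.\ for $\mv(\infty)$), and then read off both parts of the lemma from the disjoint-union decompositions \eqref{eq:init1}, \eqref{eq:init2} (resp.\ \eqref{eq:initinf1}, \eqref{eq:initinf2}) together with Remark~\ref{rem:demcos}.

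First, I would observe that, via the bijection sending $(a_{1},\dots,a_{l})$ to $x := \si{a_{1}}\si{a_{2}} \cdots \si{a_{l}}w_{0}$, the condition that $(a_{1},\dots,a_{l}) \in T(P,\bi)$ is exactly the condition appearing in Corollary~\ref{cor:main} for $xw_{0} = \si{a_{1}} \cdots \si{a_{l}}$: namely, $(a_{1},\dots,a_{l}) \in \ha{S}(xw_{0},\bi)$ with $\Ni{a_{q}}(\mu_{\bullet})=0$. Hence Corollary~\ref{cor:main} yields the identification
\begin{equation*}
W(P,\bi) = \bigl\{ x \in W \mid P \in \mv_{x}(\lambda) \bigr\}
\qquad
\bigl(\text{resp., } W(P,\bi) = \bigl\{ x \in W \mid P \in \mv_{x}(\infty) \bigr\}\bigr),
\end{equation*}
which in particular is independent of the chosen reduced word $\bi$.

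For part (a), the definition of $\iota(P)$ as the unique element of $W^{\lambda}_{\min}$ (resp.\ of $W$) such that $P \in \ol{\mv}_{\iota(P)}(\lambda) \subset \mv_{\iota(P)}(\lambda)$ (resp.\ $\ol{\mv}_{\iota(P)}(\infty) \subset \mv_{\iota(P)}(\infty)$) gives immediately $\iota(P) \in W(P,\bi)$ by the identification just made.

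For part (b), I would argue as follows. Take any $z \in W(P,\bi)$, so that $P \in \mv_{z}(\lambda)$ (resp., $P \in \mv_{z}(\infty)$). In the $\mv(\infty)$ case, \eqref{eq:initinf1} applied with $x = z$ shows $P \in \ol{\mv}_{z'}(\infty)$ for some $z' \le z$; uniqueness forces $z' = \iota(P)$, so $\iota(P) \le z$. In the $\mv(\lambda)$ case, Remark~\ref{rem:demcos} gives $\mv_{z}(\lambda) = \mv_{z^{\lambda}_{\min}}(\lambda)$, where $z^{\lambda}_{\min} \in W^{\lambda}_{\min}$ is the minimal representative of $zW_{\lambda}$; standard properties of Bruhat order on cosets (see \cite[Chap.~2, \S4]{BB}) give $z^{\lambda}_{\min} \le z$. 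Now \eqref{eq:init1} applied with $x = z^{\lambda}_{\min}$ yields $\iota(P) \le z^{\lambda}_{\min} \le z$, completing the proof.

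The argument is essentially pure bookkeeping, so there is no real obstacle; the only subtlety is taking care, in the $\mv(\lambda)$ case, to pass through the minimal coset representative $z^{\lambda}_{\min}$ in order to bring a general $z \in W(P,\bi)$ into $W^{\lambda}_{\min}$ before invoking \eqref{eq:init1}.
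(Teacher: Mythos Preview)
Your proposal is correct and follows essentially the same route as the paper: both use Corollary~\ref{cor:main} (together with the ``for every $\bi$'' strengthening in Remark~\ref{rem:every}) to translate membership in $W(P,\bi)$ into membership in a Demazure crystal, and then invoke the decomposition \eqref{eq:init1} after passing to the minimal coset representative $z^{\lambda}_{\min}$ via Remark~\ref{rem:demcos}. The only cosmetic difference is that you first package this translation as the clean identification $W(P,\bi)=\{x\in W\mid P\in\mv_{x}(\lambda)\}$ and then read off both assertions, whereas the paper argues the two assertions separately; the logic and the cited inputs are identical.
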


\begin{proof}
We give a proof only for the case that 
$P \in \mv(\lambda)$; the proof for the case 
that $P \in \mv(\infty)$ is similar. 
Since $P \in \ol{\mv}_{\iota(P)}(\lambda) \subset 
\mv_{\iota(P)}(\lambda)$ by the definition of $\iota(P)$, 
it follows from Corollary~\ref{cor:main} that
there exists a sequence 
$(a_{1},\,a_{2},\,\dots,\,a_{l}) \in \ha{S}(\iota(P)w_{0},\,\bi)$ 
such that $\Ni{a_{q}}=\Ni{a_{q}}(\mu_{\bullet})=0$ 
for all $1 \le q \le l$. Therefore, the sequence 
$(a_{1},\,a_{2},\,\dots,\,a_{l})$ is an element of 
$T(P,\,\bi)$, and hence
\begin{equation*}
\iota(P) 
 = (\iota(P)w_{0})w_{0}
 = \si{a_1}\si{a_2} \cdots \si{a_l}w_{0} \in W(P,\,\bi).
\end{equation*}

Now, let $z \in W(P,\,\bi)$. Then we deduce 
from Corollary~\ref{cor:main} that 
$P \in \mv_{z}(\lambda)$. 
Denote by $z^{\lambda}_{\min} \in W^{\lambda}_{\min}$ 
the minimal element of the left coset $zW_{\lambda}$; 
obviously, $z^{\lambda}_{\min} \le z$. 
Since $z^{\lambda}_{\min} \cdot \lambda=z \cdot \lambda$, 
we have $P \in \mv_{z}(\lambda)=
\mv_{z^{\lambda}_{\min}}(\lambda)$ (see Remark~\ref{rem:demcos}). 
Therefore, by the decomposition \eqref{eq:init1}, 
there exists an element 
$z' \in W^{\lambda}_{\min}$ with $z' \le z^{\lambda}_{\min}$ 
such that $P \in \ol{\mv}_{z'}(\lambda)$.
It follows from the uniqueness of $\iota(P)$ that 
$z' = \iota(P)$, and hence $\iota(P) \le z^{\lambda}_{\min}$.
Since $z^{\lambda}_{\min} \le z$ as noted above, we get 
$\iota(P) \le z$. This proves the lemma. 
\end{proof}
%
%
\begin{rem} \label{rem:init}
Let $P \in \mv(\lambda)$ 
(resp., $P \in \mv(\infty)$), and take an arbitrary 
$\bi \in R(w_{0})$. By Lemma~\ref{lem:init}, 
$\iota(P)$ is the minimum element $\min W(P,\,\bi)$ 
of $W(P,\,\bi)$ with respect to the Bruhat ordering on $W$.
Namely, we have 
$\iota(P)=\min W(P,\,\bi)$ for every $\bi \in R(w_{0})$. 
\end{rem}

The next proposition follows immediately 
from Remark~\ref{rem:init}. 
%
%
\begin{prop} \label{prop:init}
Let $P \in \mv(\lambda)$ 
{\rm(}resp., $P \in \mv(\infty)${\rm)}, and 
let $x \in W^{\lambda}_{\min}$ {\rm(}resp., $x \in W${\rm)}. 
Then, $P \in \ol{\mv}_{x}(\lambda)$ 
{\rm(}resp., $P \in \ol{\mv}_{x}(\infty)${\rm)} 
if and only if 
for some {\rm(}or equivalently, every\,{\rm)} $\bi \in R(w_{0})$, 
the element $x$ is identical to 
the minimum element $\min W(P,\,\bi)$ of 
$W(P,\,\bi)$ with respect to the Bruhat ordering on $W$.
\end{prop}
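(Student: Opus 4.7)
The plan is to reduce the statement directly to Remark~\ref{rem:init} together with the disjoint decompositions \eqref{eq:init2} and \eqref{eq:initinf2}. By the very definition of the map $\iota$, the condition $P \in \ol{\mv}_{x}(\lambda)$ (resp., $P \in \ol{\mv}_{x}(\infty)$) is tautologically equivalent to $\iota(P)=x$; this uses \eqref{eq:init2} (resp., \eqref{eq:initinf2}) to ensure that $\iota(P)$ is well-defined, and uses the uniqueness part of the disjoint-union decomposition to upgrade membership in $\ol{\mv}_{x}$ to the equality $\iota(P)=x$. On the other hand, Remark~\ref{rem:init} supplies the identification $\iota(P)=\min W(P,\,\bi)$ for every $\bi \in R(w_{0})$. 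Combining these two facts immediately yields both the ``only if'' direction and the ``if'' direction in the ``for every $\bi$'' form.

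For the equivalence of ``some'' and ``every'', I would argue as follows: if $x=\min W(P,\,\bi)$ holds for even a single $\bi \in R(w_{0})$, then Remark~\ref{rem:init} gives $x=\iota(P)$, and applying Remark~\ref{rem:init} a second time to an arbitrary $\bj \in R(w_{0})$ yields $\min W(P,\,\bj)=\iota(P)=x$. Hence the apparent asymmetry between ``some'' and ``every'' collapses, and the proposition follows.

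No step in this plan is genuinely hard: the entire content of the proposition is already packaged inside Remark~\ref{rem:init}, which in turn rests on Lemma~\ref{lem:init} (already proved via Corollary~\ref{cor:main} and Theorem~\ref{thm:main}). The only mild bookkeeping is to run the two parallel cases ($\mv(\lambda)$ with $W^{\lambda}_{\min}$ and $\mv(\infty)$ with $W$) simultaneously; since the decompositions \eqref{eq:init1}\,--\,\eqref{eq:initinf2} and the conclusion of Lemma~\ref{lem:init} treat both cases in a uniform manner, no separate argument is required, and the proof reduces to a few lines of formal manipulation.
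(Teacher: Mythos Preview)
Your proposal is correct and matches the paper's own argument exactly: the paper simply states that the proposition ``follows immediately from Remark~\ref{rem:init}'', and what you have written is precisely the unpacking of that remark together with the disjoint decompositions \eqref{eq:init2} and \eqref{eq:initinf2}. There is nothing to add or correct.
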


Here we give an application of our description above 
for the elements $\iota(P)$, $P \in \mv(\lambda)$. 
Let $\ast:U_{q}^{-}(\Fg^{\vee}) \rightarrow U_{q}^{-}(\Fg^{\vee})$ 
be the $\BC(q)$-algebra antiautomorphism 
that fixes the Chevalley generators $y_{j} \in U_{q}^{-}(\Fg^{\vee})$, 
$j \in I$, corresponding to the negative simple roots 
(see \cite[\S1.3]{Kasc}). 
We know from \cite[Theorem 2.1.1]{Kas} that 
this antiautomorphism 
$\ast:U_{q}^{-}(\Fg^{\vee}) \rightarrow U_{q}^{-}(\Fg^{\vee})$ 
induces an involution $\ast:\CB(\infty) \rightarrow \CB(\infty)$
on the crystal base $\CB(\infty)$ of $U_{q}^{-}(\Fg^{\vee})$, 
which we call the Kashiwara involution (or, $\ast$-operation) 
on $\CB(\infty)$. We then define an involution 
$\ast:\mv(\infty) \rightarrow \mv(\infty)$ so that 
the following diagram commutes: 
\begin{equation*}
\begin{CD}
\mv(\infty) @>{\ast}>> 
\mv(\infty)\phantom{,} \\
@V{\Psi}VV @VV{\Psi}V \\
\CB(\infty) @>{\ast}>> 
\CB(\infty), 
\end{CD}
\end{equation*}
and for $P \in \mv(\infty)$, 
denote by $P^{\ast}$ the image of $P$ 
under the involution 
$\ast:\mv(\infty) \rightarrow \mv(\infty)$. 
%
%
\begin{cor} \label{cor:init-ast}
Let $P \in \mv(\infty)$. Then, the element 
$\iota(P^{\ast}) \in W$ is identical to 
the inverse $\bigl(\iota(P)\bigr)^{-1}$ 
of the element $\iota(P) \in W$. 
\end{cor}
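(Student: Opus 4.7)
The strategy is to apply Proposition~\ref{prop:init} to both $P$ and $P^{\ast}$, using a pair of reduced words related by the natural duality $\bi \mapsto \bi^{\dagger}$ induced by $\ast$, and then to check that the resulting subsets of $W$ are exchanged by inversion. Since inversion is an order-preserving involution of the strong Bruhat order on $W$, the desired identity of minima will follow.

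Concretely, fix an arbitrary $\bi = (i_1, \ldots, i_m) \in R(w_0)$, let $i \mapsto i^{\ast}$ denote the diagram involution of $I$ induced by $w_0$ (so that $w_0 s_i w_0 = s_{i^{\ast}}$), and set $\bi^{\dagger} := (i_m^{\ast}, i_{m-1}^{\ast}, \ldots, i_1^{\ast})$, which again lies in $R(w_0)$. The key ingredient is the standard transformation rule for the length data of an MV polytope in $\mv(\infty)$ under the Kashiwara involution,
\begin{equation*}
n^{\bi^{\dagger}}_l(P^{\ast}) \;=\; \Ni{m+1-l}(P) \qquad (1 \le l \le m);
\end{equation*}
this reflects the fact that $\bi^{\dagger}$ induces the convex ordering of positive roots opposite to the one associated with $\bi$, so the PBW basis for $\bi^{\dagger}$ is the ``reverse'' of the one for $\bi$ (see Berenstein--Zelevinsky \cite{BZ} and Kamnitzer \cite{Kam2}). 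This rule produces a bijection $T(P, \bi) \to T(P^{\ast}, \bi^{\dagger})$ sending $(a_1, \ldots, a_l)$ to $(m+1-a_l, \ldots, m+1-a_1)$.

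Next, a short group-theoretic computation, using $w_0 s_i w_0 = s_{i^{\ast}}$, shows that the above bijection carries $z := \si{a_1}\si{a_2} \cdots \si{a_l} \cdot w_0 \in W(P, \bi)$ to $z^{-1} \in W(P^{\ast}, \bi^{\dagger})$. Indeed, writing $\bj := \bi^{\dagger}$ and $c_q := m+1-a_{l+1-q}$, the $w_0$'s telescope to give
\begin{equation*}
\sj{c_1}\sj{c_2} \cdots \sj{c_l} \;=\; w_0 \,\bigl( \si{a_1}\si{a_2} \cdots \si{a_l} \bigr)^{-1}\, w_0,
\end{equation*}
hence $\sj{c_1} \cdots \sj{c_l} \cdot w_0 = w_0\,(\si{a_1} \cdots \si{a_l})^{-1} = z^{-1}$ (using $\si{a_1} \cdots \si{a_l} = zw_0$ and $w_0^2 = e$). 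Consequently $W(P^{\ast}, \bi^{\dagger}) = \bigl\{z^{-1} : z \in W(P, \bi)\bigr\}$, and Remark~\ref{rem:init} yields $\iota(P^{\ast}) = \min W(P^{\ast}, \bi^{\dagger}) = \bigl(\min W(P, \bi)\bigr)^{-1} = \iota(P)^{-1}$, as desired.

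The main obstacle is justifying the length-data transformation rule above: it is standard in the canonical basis / MV polytope literature but is not proved elsewhere in the paper. A self-contained treatment would either trace through the action of $\ast$ on Lusztig parametrizations using the tropical Pl\"ucker relations (together with the observation that $\bi^{\dagger}$ realizes the reverse convex ordering of positive roots), or extract it directly from Kamnitzer's explicit description of the $\ast$-involution on $\mv(\infty)$ at the level of GGMS data.
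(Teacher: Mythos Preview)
Your proposal is correct and follows essentially the same route as the paper's proof: the paper likewise passes from $\bi$ to the reversed-and-twisted word $\bj=(\omega(i_m),\ldots,\omega(i_1))$ (your $\bi^{\dagger}$, since $\omega$ coincides with your $i\mapsto i^{\ast}$), invokes the length-data identity $\Nj{l}(\mu_{\bullet}')=\Ni{m+1-l}(\mu_{\bullet})$ with a citation to \cite[Proposition~6.1]{Kam2} and \cite[Proposition~3.3\,(iii)]{BZ}, and then performs exactly your telescoping computation to obtain $W(P^{\ast},\bj)=\{w^{-1}\mid w\in W(P,\bi)\}$ before concluding via Proposition~\ref{prop:init}. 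Regarding your stated ``main obstacle'': the paper does not prove the transformation rule either but simply cites it, so your proof is on par with the paper's in this respect.
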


\begin{proof}[Proof of Corollary~\ref{cor:init-ast}]
Let $\mu_{\bullet}=(\mu_{w})_{w \in W}$ and 
$\mu_{\bullet}'=(\mu_{w}')_{w \in W}$ be GGMS data 
for $P \in \mv(\infty)$ and $P^{\ast} \in \mv(\infty)$, 
respectively. 
Take an arbitrary 
$\bi=(i_{1},\,i_{2},\,\dots,\,i_{m}) \in R(w_{0})$, and 
define $\bj=(j_{1},\,j_{2},\,\dots,\,j_{m}) \in R(w_{0})$ by:
$\bj=(\omega(i_{m}),\,\omega(i_{m-1}),\,\dots,\,\omega(i_{1}))$,
where $\omega:I \rightarrow I$ is the Dynkin diagram automorphism 
for $\Fg$ such that $\alpha_{\omega(j)}=-w_{0} \cdot \alpha_{j}$ 
for all $j \in I$. 
Then we deduce from \cite[Proposition~6.1]{Kam2} 
(see also \cite[Proposition~3.3\,(iii)]{BZ}) that
%
%
\begin{equation} \label{eq:ast-length}
\Bigl(
 \Nj{1}(\mu_{\bullet}'),\,
 \Nj{2}(\mu_{\bullet}'),\,\dots,\,
 \Nj{m}(\mu_{\bullet}')
\Bigr)
=
\Bigl(
 \Ni{m}(\mu_{\bullet}),\,
 \Ni{m-1}(\mu_{\bullet}),\,\dots,\,
 \Ni{1}(\mu_{\bullet})
\Bigr).
\end{equation}
From \eqref{eq:ast-length}, it follows that a sequence 
$(a_{1},\,a_{2},\,\dots,\,a_{l})$ is an element of 
$T(P,\,\bi)$ if and only if the sequence 
$(m-a_{l}+1,\,m-a_{l-1}+1,\,\dots,\,m-a_{1}+1)$ is 
an element of $T(P^{\ast},\,\bj)$. Therefore, we have
%
%
\begin{align}
& W(P^{\ast},\,\bj) 
=
\bigl\{ 
 \sj{b_1}\sj{b_2} \cdots \sj{b_l}w_{0} \in W \mid 
 (b_{1},\,b_{2},\,\dots,\,b_{l}) \in T(P^{\ast},\,\bj)
\bigr\} \nonumber \\
& \quad =
\bigl\{ 
 \sj{m-a_l+1}\sj{m-a_{l-1}+1} \cdots \sj{m-a_1+1}w_{0} \in W \mid 
 (a_{1},\,a_{2},\,\dots,\,a_{l}) \in T(P,\,\bi)
\bigr\} \nonumber \\
& \quad =
\bigl\{ 
 \si{\omega(a_l)}\si{\omega(a_{l-1})} \cdots \si{\omega(a_1)}w_{0} \in W \mid 
 (a_{1},\,a_{2},\,\dots,\,a_{l}) \in T(P,\,\bi)
\bigr\} \nonumber \\
& \quad =
\bigl\{ 
 w_{0}\si{a_l}\si{a_{l-1}} \cdots \si{a_1} \in W \mid 
 (a_{1},\,a_{2},\,\dots,\,a_{l}) \in T(P,\,\bi)
\bigr\} \nonumber \\
& \quad = 
\bigl\{w^{-1} \mid w \in W(P,\,\bi)\bigr\}.  \label{eq:W-inv}
\end{align}
We know from Proposition~\ref{prop:init} that 
the elements $\iota(P^{\ast})$ and $\iota(P)$ are 
identical to the minimum elements $\min W(P^{\ast},\,\bj)$ and 
$\min W(P,\,\bi)$, respectively. Combining this fact and 
\eqref{eq:W-inv} above, we conclude that 
$\iota(P^{\ast})=\bigl(\iota(P)\bigr)^{-1}$, as desired. 
\end{proof}

%
\subsection{Decomposition of opposite Demazure crystals.}
\label{subsec:decom-opdem}

Let us denote by 
$W^{\lambda}_{\max}$ the set of maximal coset representatives 
for the quotient set $W/W_{\lambda}$ 
with respect to the Bruhat ordering $\le$ on $W$ 
(see, for example, \cite[Chap.~2, \S4]{BB}).

\begin{rem}
It is clear that $W^{\lambda}_{\max}=
W^{\lambda}_{\min}w_{0,\lambda}$, where 
$w_{0,\lambda} \in W_{\lambda}$ denotes 
the longest element of $W_{\lambda}$.
\end{rem}

For each $x \in W^{\lambda}_{\max}$, define a subset 
$\ol{\mv}^{x}(\lambda)$ of 
the opposite Demazure crystal $\mv^{x}(\lambda)$ by:
%
%
\begin{equation} \label{eq:dfn-opmd}
\ol{\mv}^{x}(\lambda)=
\mv^{x}(\lambda) \setminus 
\left(
 \bigcup_{z \in W^{\lambda}_{\max},\,\,z > x}
 \mv^{z}(\lambda)
\right);
\end{equation}
recall from \cite[Proposition~3.2.4]{Kas} 
(along with the comment following 
\cite[Proposition~4.3]{Kas}) that 
for $z \in W^{\lambda}_{\max}$ with $z > x$, we have 
$\mv^{z}(\lambda) \subset \mv^{x}(\lambda)$. 
We can easily show by descending induction on 
$\ell(x)$ that for each $x \in W$, 
%
%
\begin{equation} \label{eq:fin1}
\mv^{x}(\lambda)=
 \bigsqcup_{z \in W^{\lambda}_{\max},\,\,z \ge x}
 \ol{\mv}^{z}(\lambda), 
\end{equation}
and in particular, 
%
%
\begin{equation} \label{eq:fin2}
\mv(\lambda)=
 \bigsqcup_{z \in W^{\lambda}_{\max}}
 \ol{\mv}^{z}(\lambda);
\end{equation}
note that $\mv^{e}(\lambda)=\mv(\lambda)$. 
For each $P \in \mv(\lambda)$, we denote by $\kappa(P)$ 
the unique element $z \in W^{\lambda}_{\max}$ for which 
$P \in \ol{\mv}^{z}(\lambda)$; 
we see from \eqref{eq:fin1} and \eqref{eq:fin2} 
that the element $\kappa(P) \in W$ specifies the 
smallest (with respect to the inclusion relation) 
opposite Demazure crystal in which $P$ lies. 
%
%
\begin{rem} \label{rem:extfin}
Let $x \in W$, and denote by 
$x^{\lambda}_{\max} \in W^{\lambda}_{\max}$ 
the (unique) maximal element of 
the left coset $xW_{\lambda}$ 
with respect to the Bruhat ordering on $W$;
note that $x^{\lambda}_{\max}=
x^{\lambda}_{\min} w_{0,\lambda}$. 
From \eqref{eq:dfn-opmd}, it is easily seen that 
$\kappa(P_{x \cdot \lambda})=x^{\lambda}_{\max}=
x^{\lambda}_{\min}w_{\lambda,0}$. 
\end{rem}

Let $P=P(\mu_{\bullet})$ be an element of $\mv(\lambda)$ 
with GGMS datum $\mu_{\bullet}=(\mu_{w})_{w \in W}$. 
For each $\bi=(i_{1},\,i_{2},\,\dots,\,i_{m}) \in R(w_{0})$, 
we set 
\begin{equation*}
U(P,\,\bi):= \bigl\{ 0 \le p \le m \mid 
 \text{$\mu_{\wi{l}}=\wi{l}w_{0} \cdot \lambda$ 
       for every $p \le l \le m$}
 \bigr\}.
\end{equation*}
Then we set 
\begin{equation*}
Y(P):=\bigl\{\wi{p}w_{0} \mid p \in U(P,\,\bi), \, \bi \in R(w_{0}) \bigr\}.
\end{equation*}
%
%
\begin{lem} \label{lem:fin}
Let $P \in \mv(\lambda)$.  
We have $\kappa(P) \in Y(P)$, and 
$\kappa(P) \ge z$ for all $z \in Y(P)$
with respect to the Bruhat ordering $\ge$ on $W$. 
\end{lem}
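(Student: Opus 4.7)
The plan is to mirror the proof of Lemma~\ref{lem:init}, substituting Theorem~\ref{thm:opdem} (our polytopal characterization of opposite Demazure crystals) for Corollary~\ref{cor:main}, and using the decomposition \eqref{eq:fin1} in place of \eqref{eq:init1}. Two claims must be established: membership $\kappa(P) \in Y(P)$, and the inequality $\kappa(P) \ge z$ for every $z \in Y(P)$.

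For the first claim, I start from $P \in \ol{\mv}^{\kappa(P)}(\lambda) \subset \mv^{\kappa(P)}(\lambda)$. Setting $p := \ell(\kappa(P) w_{0})$, I choose a reduced word $\bi = (i_{1},\,\dots,\,i_{m}) \in R(w_{0})$ whose initial segment of length $p$ is a reduced expression for $\kappa(P) w_{0}$; such a $\bi$ exists because $\ell(\kappa(P)) + \ell(\kappa(P) w_{0}) = \ell(w_{0})$, so any reduced expression of $\kappa(P) w_{0}$ extends to a reduced expression of $w_{0}$. By Theorem~\ref{thm:opdem} (with the ``for every $\bi$'' strengthening noted in the remark following its proof), the condition (Op.\,Dem.) holds for this $\bi$, giving $\mu_{\wi{l}} = \wi{l} w_{0} \cdot \lambda$ for all $p \le l \le m$. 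Hence $p \in U(P,\,\bi)$, and consequently $\kappa(P) = (\kappa(P) w_{0}) w_{0} = \wi{p} w_{0} \in Y(P)$.

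For the minimality, I take $z \in Y(P)$ and write $z = \wi{p} w_{0}$ with $\bi \in R(w_{0})$ and $p \in U(P,\,\bi)$. Because $\bi$ is reduced, so is its initial segment, giving $\ell(\wi{p}) = p$ and hence $\ell(zw_{0}) = p$. Combined with the definition of $U(P,\,\bi)$, this verifies the condition (Op.\,Dem.) for $x = z$, so Theorem~\ref{thm:opdem} yields $P \in \mv^{z}(\lambda)$. Letting $z^{\lambda}_{\max} \in W^{\lambda}_{\max}$ denote the maximal element of the left coset $zW_{\lambda}$ (so $z^{\lambda}_{\max} \ge z$), Remark~\ref{rem:opdemcos} gives $\mv^{z}(\lambda) = \mv^{z^{\lambda}_{\max}}(\lambda)$. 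Applying the decomposition \eqref{eq:fin1} to $\mv^{z^{\lambda}_{\max}}(\lambda)$ produces some $w \in W^{\lambda}_{\max}$ with $w \ge z^{\lambda}_{\max}$ and $P \in \ol{\mv}^{w}(\lambda)$; uniqueness of $\kappa(P)$ forces $w = \kappa(P)$, whence $\kappa(P) \ge z^{\lambda}_{\max} \ge z$.

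No substantive obstacle arises: the argument is a routine transcription of the Lemma~\ref{lem:init} strategy into the opposite Demazure setting. The only minor point warranting care is the length identity $\ell(\kappa(P)) + \ell(\kappa(P) w_{0}) = \ell(w_{0})$, which legitimizes the choice of reduced word $\bi$ used in the first step; beyond that, the proof is essentially bookkeeping that links the polytopal description in $Y(P)$ with the crystal-theoretic definition of $\kappa(P)$ via the disjoint decomposition \eqref{eq:fin1}.
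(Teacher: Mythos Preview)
Your proof is correct and follows essentially the same approach as the paper's. The only cosmetic difference is in the first step: where you construct a specific $\bi$ and invoke the ``for every $\bi$'' remark after Theorem~\ref{thm:opdem}, the paper simply appeals to the definition of $\mv^{\kappa(P)}(\lambda)$ (the ``for some $\bi$'' form of (Op.\,Dem.)) to obtain directly a reduced word witnessing $\kappa(P) \in Y(P)$; both routes are equally valid.
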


\begin{proof}
Let $P=P(\mu_{\bullet}) \in \mv(\lambda)$ be an MV polytope 
with GGMS datum $\mu_{\bullet}=(\mu_{w})_{w \in W}$. 
It follows from the definition of $\kappa(P)$ that
$P \in \ol{\mv}^{\kappa(P)}(\lambda) \subset 
\mv^{\kappa(P)}(\lambda)$. 
Therefore, by the definition of $\mv^{\kappa(P)}(\lambda)$, 
there exists $\bi \in R(w_{0})$ such that
$\wi{p}=\kappa(P)w_{0}$, with $p=\ell(\kappa(P)w_{0})$, 
and such that $\mu_{\wi{l}}=\wi{l}w_{0} \cdot \lambda$ 
for all $p \le l \le m$. Hence we have
\begin{equation*}
Y(P) \ni \wi{p}w_{0}
  =\bigl(\kappa(P)w_{0}\bigr)w_{0}
  =\kappa(P).
\end{equation*}

Now, let $z \in Y(P)$. Then we deduce 
from Theorem~\ref{thm:opdem} that $P \in \mv^{z}(\lambda)$.
Denote by $z^{\lambda}_{\max} \in W^{\lambda}_{\max}$ 
the maximal element of the left coset $zW_{\lambda}$; 
obviously, $z^{\lambda}_{\max} \ge z$. 
Since $z^{\lambda}_{\max} \cdot \lambda=z \cdot \lambda$, 
we have $P \in \mv^{z}(\lambda)=\mv^{z^{\lambda}_{\max}}(\lambda)$ 
(see Remark~\ref{rem:opdemcos}). 
Therefore, by the decomposition \eqref{eq:fin1}, 
there exists an element $z' \in W^{\lambda}_{\max}$ 
with $z' \ge z^{\lambda}_{\max}$ 
such that $P \in \ol{\mv}^{z'}(\lambda)$. 
It follows from the uniqueness of $\kappa(P)$ 
that $z' = \kappa(P)$, and hence 
$\kappa(P) \ge z^{\lambda}_{\max}$.
Since $z^{\lambda}_{\max} \ge z$ as noted above, 
we get $\kappa(P) \ge z$. This proves the lemma. 
\end{proof}
%
%
\begin{rem} \label{rem:fin}
Let $P \in \mv(\lambda)$. 
By Lemma~\ref{lem:fin}, 
$\kappa(P)$ is the maximum element 
$\max Y(P)$ of $Y(P)$ with respect to 
the Bruhat ordering on $W$. 
Namely, we have $\kappa(P)=\max Y(P)$. 
\end{rem}

The next proposition follows immediately 
from Remark~\ref{rem:fin}. 
%
%
\begin{prop} \label{prop:fin}
Let $P \in \mv(\lambda)$, and 
let $x \in W^{\lambda}_{\max}$. 
Then, $P \in \ol{\mv}^{x}(\lambda)$ 
if and only if the element $x$ is identical to 
the maximum element $\max Y(P)$ of $Y(P)$ 
with respect to the Bruhat ordering on $W$. 
\end{prop}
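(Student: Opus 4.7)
The plan is to deduce the proposition directly from Remark~\ref{rem:fin}, since Lemma~\ref{lem:fin} has already established the identification $\kappa(P)=\max Y(P)$. The key structural fact I would invoke is the disjoint-union decomposition \eqref{eq:fin2}, which asserts $\mv(\lambda)=\bigsqcup_{z \in W^{\lambda}_{\max}} \ol{\mv}^{z}(\lambda)$. Disjointness forces each $P \in \mv(\lambda)$ to lie in exactly one piece $\ol{\mv}^{z}(\lambda)$; by definition, that unique index is precisely $\kappa(P)$.

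Consequently, for any $x \in W^{\lambda}_{\max}$, the condition $P \in \ol{\mv}^{x}(\lambda)$ is equivalent to $x=\kappa(P)$. Substituting the identification $\kappa(P)=\max Y(P)$ from Remark~\ref{rem:fin} yields the desired equivalence: $P \in \ol{\mv}^{x}(\lambda)$ holds if and only if $x = \max Y(P)$, which completes the proof.

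The only step in this short chain that contains genuine content is the identification $\kappa(P)=\max Y(P)$, i.e., Lemma~\ref{lem:fin}; there one must verify that $\kappa(P)$ itself belongs to $Y(P)$ (using the condition (Op. Dem.) applied to $\kappa(P) \in W^{\lambda}_{\max}$ together with the definition of $U(P,\bi)$) and that every element of $Y(P)$ is Bruhat-bounded above by $\kappa(P)$ (using Theorem~\ref{thm:opdem} to obtain $P \in \mv^{z}(\lambda)$ for each $z \in Y(P)$, then passing to $z^{\lambda}_{\max}$ via Remark~\ref{rem:opdemcos} and invoking the uniqueness furnished by the refinement \eqref{eq:fin1}). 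Once that lemma is in place, Proposition~\ref{prop:fin} itself is a pure formality, so the real obstacle — to the extent there is one — lies upstream in establishing Theorem~\ref{thm:opdem} and the decomposition \eqref{eq:fin2}, rather than in the present deduction.
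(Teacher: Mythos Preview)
Your argument is correct and mirrors the paper's own treatment: the paper simply states that Proposition~\ref{prop:fin} follows immediately from Remark~\ref{rem:fin}, i.e., from the identification $\kappa(P)=\max Y(P)$ together with the defining disjointness \eqref{eq:fin2}. Your additional paragraph recapitulating the content of Lemma~\ref{lem:fin} is accurate but not needed for the deduction of the proposition itself.
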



{\small
\setlength{\baselineskip}{13pt}
\renewcommand{\refname}{References}

}

\end{document}